\date{December 1, 2015}
\let\oldsection\section
\renewcommand\section{\setcounter{equation}{0}\oldsection}
\newtheorem{corollary}{Corollary}[section]
\newtheorem{theorem}{Theorem}[section]
\newtheorem{lemma}{Lemma}[section]
\newtheorem{proposition}{Proposition}[section]
\newtheorem{definition}{Definition}[section]
\newtheorem{remark}{Remark}[section]
\begin{document}

\title[Primitive equations with discontinuous initial data]{Existence and uniqueness of weak solutions to viscous primitive equations for certain class of discontinuous initial data}


\author{Jinkai~Li}
\address[Jinkai~Li]{Department of Computer Science and Applied Mathematics, Weizmann Institute of Science, Rehovot 76100, Israel}
\email{jklimath@gmail.com}

\author{Edriss~S.~Titi}
\address[Edriss~S.~Titi]{
Department of Mathematics, Texas A\&M University, 3368 TAMU, College Station, TX 77843-3368, USA. ALSO, Department of Computer Science and Applied Mathematics, Weizmann Institute of Science, Rehovot 76100, Israel.}
\email{titi@math.tamu.edu and edriss.titi@weizmann.ac.il}

\keywords{Existence and uniqueness; discontinuous initial data; weak solution; primitive equations; hydrostatic Navier-Stokes equations.}
\subjclass[2010]{35Q35, 76D03, 86A10.}


\begin{abstract}
We establish some conditional uniqueness of weak solutions to the
viscous primitive equations, and as an application, we prove the
global existence and uniqueness of weak solutions, with the initial
data taken as small
$L^\infty$ perturbations of functions in the space
$X=\left\{v\in (L^6(\Omega))^2|\partial_zv\in
(L^2(\Omega))^2\right\}$; in particular, the initial data are allowed
to be discontinuous. Our result generalizes in a uniform way the
result on the uniqueness of weak solutions with continuous initial
data and that of the so-called $z$-weak solutions.
\end{abstract}

\maketitle

\allowdisplaybreaks

\section{Introduction}
\label{sec1}

The primitive equations
are derived from the Boussinesq system of incompressible flow under
the hydrostatic balance assumption, by taking the zero singular perturbation limit of the small aspect ratio. This singular perturbation limit of small aspect ratio can be rigorously justified, see, Az\'erad--Guill\'en \cite{AZGU} and Li--Titi \cite{LITITIHYDRO}. The primitive equations play a fundamental role for weather prediction models, see, e.g., the books by Lewandowski
\cite{LEWAN}, Majda \cite{MAJDA}, Pedlosky \cite{PED}, Vallis
\cite{VALLIS}, and Washington--Parkinson \cite{WP}.

In this paper, we consider the following primitive equations (without considering the coupling to the temperature equation):
\begin{eqnarray}
&\partial_tv+(v\cdot\nabla_H)v+w\partial_zv+\nabla_Hp(\textbf{x}^H,t)-\Delta v+f_0k\times
v=0,\label{MAIN1}\\
&\nabla_H\cdot v+\partial_zw=0,\label{MAIN2}
\end{eqnarray}
where the horizontal velocity $v=(v^1,v^2)$, the vertical velocity
$w$ and the pressure $p$ are the unknowns, and $f_0$ is the Coriolis
parameter. Note that though (\ref{MAIN1})--(\ref{MAIN2}) is a
three-dimensional system, the pressure $p$ depends only on two spatial
variables, and there is no dynamical equation for the vertical velocity
$w$. We use $\textbf{x}=(x^1, x^2, z)$ to denote the spatial variable,
$\textbf{x}^H=(x^1, x^2)$ the horizontal spatial variable, and
$\nabla_H=(\partial_1,\partial_2)$ the horizontal gradient. We complement  system (\ref{MAIN1})--(\ref{MAIN2}) with initial and boundary conditions which  will be discussed in  details below. Note that the primitive equations considered here, i.e.,
(\ref{MAIN1})--(\ref{MAIN2}), do not include the coupling with
the temperature equation;
however, one can adopt the same approach presented in this paper to
establish the corresponding results for the full primitive equations coupled
with temperature equation, for as long as the temperature equation has
full diffusivity. For the sake of simplicity,
we consider here the primitive equations without the coupling with the temperature equation,
i.e., system (\ref{MAIN1})--(\ref{MAIN2}).

Since the pioneer works by Lions--Temam--Wang \cite{LTW92A,LTW92B,LTW95}
in the 1990s, there has been a lot of literatures on the mathematical
studies of the primitive equations. The global existence of
weak solutions has been proven long time ago by Lions--Temam--Wang
\cite{LTW92A,LTW92B,LTW95} in the 1990s; however, the uniqueness of
weak solutions in the general case is still an open question, even
for the two-dimensional case. This is different from the
incompressible Navier-Stokes equations, since it is well-known that
the weak solution to the two-dimensional incompressible
Navier-Stokes equations is unique (see, e.g., Ladyzhenskaya \cite{LADINBOOK}, Temam \cite{TEMNSBOOK} and Constantin--Foias \cite{CONFOINSBOOK}).
The main obstacle of proving
the uniqueness of weak solutions to the two-dimensional
primitive equations is the absence of the dynamical equation for
the vertical velocity. In fact, the vertical velocity can only
be recovered from the horizontal velocity through the
incompressibility condition, and as a result, there is one
derivative loss for the horizontal velocity.
Though the general result on the uniqueness of weak solutions to
the primitive equations is still unknown, some particular cases
have been solved, see Bresch et al \cite{BGMR03}, Petcu--Temam--Ziane \cite{PTZ09} and Tachim Madjo \cite{TACHIM} for the case of the
so-called $z$-weak solutions, i.e., the weak solutions with initial
data in $X\cap\mathcal H$ (the spaces $X$ and $\mathcal H$ are given,
below, by (\ref{X}) and (\ref{H}), respectively),
and Kukavica--Pei--Rusin--Ziane \cite{KPRZ} for the case of
weak solutions with continuous initial data.
In the context of strong solutions, the local well-posedness
was established in Guill\'en-Gonz\'alez et al \cite{GMR01}. Remarkably,
the strong solutions to the primitive equations have already been
proven to be global for
both 2D and 3D cases, see Cao--Titi \cite{CAOTITI2,CAOTITI3},
Kobelkov \cite{KOB06} and Kukavica--Ziane \cite{KZ07A,KZ07B}, see
also Cao--Li--Titi \cite{CAOLITITI1,CAOLITITI2,CAOLITITI3,CAOLITITI4,CAOLITITI5}
for some recent progress in the direction of global well-posedness
of strong solutions to the systems with partial viscosities or
diffusivity, as well as Hieber--Kashiwabara \cite{HIEKAS}
for some progress towards relaxing the smoothness on the initial
data but still for the system with full viscosities and diffusivity,
by using the semigroup method. Notably, smooth solutions to
the inviscid primitive equations, with or without coupling to the temperature equation, have
been shown by Cao et al.\,\cite{CINT} and Wong \cite{TKW} to blow up in
finite time. For the results on the local well-posedness with monotone
or analytic initial data of the
inviscid primitive equations (which are also called inviscid Prandtl equations or hydrostatic Euler equations), i.e.,
system (\ref{MAIN1})--(\ref{MAIN2}), without the Lapalacian term $\Delta v$, see, e.g., Brenier \cite{BRE}, Masmoudi--Wong \cite{MASWON}, Kukavica--Temam--Vicol--Ziane \cite{KTVZ}, and the references therein.

Another
remarkable difference between the incompressible Navier-Stokes equations
and the primitive equations is their well-posedness theories with $L^p$
initial data. It is well-known that, for any $L^p$ initial data, with
$d\leq p\leq\infty$, there is a unique local mild solution to the
$d$-dimensional incompressible Navier-Stokes equations, see, Kato \cite{KATO} and Giga \cite{GIGA86,GIGA99};
however, for the primitive equations, though the $L^p$ norms of the
weak solutions keep finite up to any finite time, as long as the
initial data belong to $L^p$ (one can apply Proposition 3.1 in Cao--Li--Titi \cite{CAOLITITI3} to achieve this fact with the help of some regularization procedure), it is
still an open question to show the uniqueness of weak solutions to
the primitive equations with $L^p$ initial data. One of the aims of this paper is to partly answer the question of the
existence and uniqueness of weak solutions to the primitive equations, with initial data in $L^\infty$. It will be shown, as a consequence of our result, that the primitive equations possess a unique global weak solution, with initial data in $L^\infty$, as long
as the discontinuity of the initial data is sufficiently small.

We consider system (\ref{MAIN1})--(\ref{MAIN2}) in the three-dimensional horizontal layer $\Omega_0=M\times(-h, 0)$, confined between the horizontal walls $z=-h$ and $z=0$, subject to periodic boundary conditions with respect to the horizontal variables $\textbf{x}^H=(x^1,x^2)$ with basic fundamental periodic domain  $M=(0,1)\times(0,1) \subset \mathbb{R}^2$. We also suppose that the flow is stress-free at, and tangential to, the solid boundaries $z=-h$ and $z=0$. In other words,
we complement system (\ref{MAIN1})--(\ref{MAIN2}) with the following boundary and initial conditions
\begin{eqnarray}
  &v, w\text{ and }p\text{ are periodic in }\textbf{x}^H, \label{bc1'}\\
  &w|_{z=-h,0}=0,\quad \partial_zv|_{z=-h,0}=0, \label{bc2'}\\
  &v|_{t=0}=v_0. \label{ic'}
\end{eqnarray}
Extend the unknowns $v$ and $w$ evenly and oddly, respectively,
with respect to $z$, to be defined on the larger spatial domain
$\Omega=M\times(-h,h)$, then the extended unknowns $v$ and $w$ are
periodic, and are even and odd, respectively, in $z$. After such kind
extension of the unknowns, the boundary and initial conditions
(\ref{bc1'})--(\ref{ic'}) are equivalent to the following periodic boundary and initial
conditions
\begin{eqnarray}
  &v, w\text{ and }p\text{ are periodic in }x^1, x^2\text{ and }z,\label{BC1}\\
  &v \text{ and } w \text{  are even and odd in } z, \text{respectively,}\label{BC2}\\
  &v|_{t=0}=v_0. \label{IC}
\end{eqnarray}

Because of the equivalence of the above two kinds of boundary and
initial conditions, we consider, throughout this paper, the
boundary and initial conditions (\ref{BC1})--(\ref{IC}), in other
words, we consider system (\ref{MAIN1})--(\ref{MAIN2}), subject
to (\ref{BC1})--(\ref{IC}).
Note that condition (\ref{BC2}) is a symmetry condition, which is
preserved by system (\ref{MAIN1})--(\ref{MAIN2}), that is, if a smooth
solution to system (\ref{MAIN1})--(\ref{MAIN2}) exists and is unique, then it must satisfy the symmetry
condition (\ref{BC2}), as long as it is initially satisfied. We also
note that since there is no dynamical equation for the vertical
velocity, no initial condition is imposed on $w$. In fact, it is
uniquely determined by the horizontal velocity, through the
incompressibility condition (\ref{MAIN2}) and the boundary conditions
on $w$.

Note that the vertical velocity $w$ can be uniquely expressed in terms of the horizontal velocity $v$, through the incompressibility condition (\ref{MAIN2}) and the symmetry condition (\ref{BC2}), as
$$
w(x,y,z,t)=-\nabla_H\cdot\left(\int_{-h}^z v(x,y,\xi,t)d\xi\right).
$$
As a result, system (\ref{MAIN1})--(\ref{IC}) is equivalent to the following one
\begin{eqnarray}
&\partial_tv+(v\cdot\nabla_H)v+w\partial_zv+\nabla_Hp(\textbf{x}^H,t)-\Delta v+f_0k\times
v=0,\label{main1}\\
&\nabla_H\cdot\left(\int_{-h}^h v(x,y,z,t)dz\right)=0,\label{main1'}\\
&w(x,y,z,t)=-\nabla_H\cdot\left(\int_{-h}^z v(x,y,\xi,t)d\xi\right),\label{main2}
\end{eqnarray}
subject to the boundary and initial conditions
\begin{eqnarray}
  &p \mbox{ is periodic in }\textbf{x}^H; \text{ and }v\mbox{ is periodic in }\textbf{x}^H \text{ and }z, \mbox{ and is even in }z,\label{bc1}\\
  &v|_{t=0}=v_0. \label{ic}
\end{eqnarray}

Let us introduce some necessary notations, and give the definition of
weak solutions. We denote by
$$
C_\text{per}(\Omega)=\{f\in C(\mathbb{R}^3)|f\text{ is periodic in }x^1,x^2\mbox{ and }z, \mbox{ with basic periodic domain } \Omega\},
$$
and for any positive integer $m$, we set
$$
C_\text{per}^m(\Omega)=\{f|\nabla^\alpha f\in C_\text{per}(\Omega), 0\leq|\alpha|\leq m\}.
$$
For positive integer $m$ and number $q\in[1,\infty]$, we use
$W_\text{per}^{m,q}(\Omega)$ to denote the space of
the closure of $C_\text{per}^m(\Omega)$ in $W^{m,q}(\Omega)$, and
denote by $W^{-m,q'}_\text{per}(\Omega)$ its dual space,
when $q\in[1,\infty)$. One
can easily verify, with the help of the standard modifier, that
$$
W_\text{per}^{m,q}(\Omega)=\{f\in W^{m,q}(\Omega)|\tilde f\in W^{m,q}_\text{loc}(\mathbb R^3)\},
$$
where $\tilde f$ is the periodic extension of $f$ to the whole space.
In case that $q=2$, we always use $H^m_\text{per}(\Omega)$ instead of
$W^{m,2}_{\text{per}}(\Omega)$. One may also define the periodic
$L^q(\Omega)$ space as the closure of $C_\text{per}(\Omega)$ in
$L^q(\Omega)$; however, one can easily check that this space coincides
with $L^q(\Omega)$. For simplicity, we use the same notations to
denote a space and its product spaces, that is for a space $Z$,
and a positive integer $N$, we still use $Z$ to denote its $N$-product
space $Z^N$. We always use $\|u\|_p$ to denote the $L^p$ norm of $u$.
The following two spaces $X$ and $\mathcal H$ will be used throughout this paper
\begin{equation}\label{X}
X=\{v\in L^6(\Omega)|v\text{ is periodic in }z, \mbox{ and }\partial_zv\in L^2(\Omega)\},
\end{equation}
and
\begin{equation}\label{H}
\mathcal H=\left\{ v\in L^2(\Omega)\bigg|v\mbox{ is periodic in }\textbf{x}^H, \mbox{ even in }z, \mbox{ and } \nabla_H\cdot \left(\int_{-h}^hv(\textbf{x}^H,z)dz\right)=0\right\}.
\end{equation}

We state the definition of weak solutions to system (\ref{main1})--(\ref{ic}) as follows.

\begin{definition}
  \label{def1.1}
Given a function $v_0\in\mathcal H$. A function $v$ is called a global weak solution to system (\ref{main1})--(\ref{ic}), if the following hold:

(i) $v\in C([0,\infty);L^2_w(\Omega))\cap L^2_{\text{loc}}([0,\infty); H^1_{per}(\Omega)\cap\mathcal H)$, where $L^2_w$ stands for the $L^2$ space equipped with the weak topology.

(ii) For any compactly supported in time function $\varphi\in C([0,\infty); C_\text{per}^1(\Omega)\cap\mathcal H))\cap C^1([0,\infty); C_\text{per}(\Omega))$, the following equality holds
$$
\int_0^\infty\int_\Omega[-v\partial_t\varphi+(v\cdot\nabla_H) v\cdot\varphi+w\partial_zv\cdot\varphi+\nabla v:\nabla\varphi]d\textbf{x}dt=\int_\Omega v_0(\textbf{x})\varphi(\textbf{x},0)d\textbf{x},
$$
where the vertical velocity $w$ is given by (\ref{main2}).

(iii) The following differential inequality holds
$$
\frac12\frac{d}{dt}\|v\|_2^2(t)+\|\nabla v\|_2^2(t)\leq0,\quad\mbox{in }\mathcal D'((0,\infty));
$$

(iv) The following energy inequality holds
$$
\frac12\|v\|_2^2(t)+\int_0^t\|\nabla v\|_2^2(\tau)d\tau\leq\frac12\|v_0\|_2^2,
$$
for a.e.~$t\in(0,\infty)$.
\end{definition}

For any initial data $v_0\in\mathcal H$, following the arguments in
\cite{LTW92A,LTW92B,LTW95}, there is a global weak solution to system
(\ref{main1})--(\ref{ic}); however, as we mentioned before, it is still
an open question to show the uniqueness of weak solutions to the
primitive equations. Nevertheless, we can prove the following theorem
on the conditional uniqueness of weak solutions to the primitive
equations.

%
%


\begin{theorem}
  \label{thmwsu}
  Let $v$ be a global weak solution to system (\ref{main1})--(\ref{ic}). Suppose that there is a positive time $T_v$, such that
  \begin{eqnarray*}
  v(\textbf{x},t)=\bar v(\textbf{x},t)+V(\textbf{x},t),\quad \textbf{x}\in\Omega, t\in(0,T_v),\\
  \partial_z\bar v\in L^\infty(0,T_v; L^2(\Omega))\cap L^2(0,T_v; H^1_\text{per}(\Omega)),\quad V\in L^\infty(\Omega\times(0, T_v)).
  \end{eqnarray*}

  Then, there is a positive constant $\varepsilon_0$ depending only on $h$, such that $v$ is the unique global weak solution to system (\ref{main1})--(\ref{ic}), with the same initial data as $v$, provided
  $$
  \sup_{0<t<T_v'}\|V\|_\infty\leq\varepsilon_0,
  $$
  for some $T_v'\in(0,T_v)$.
\end{theorem}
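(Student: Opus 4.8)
The plan is to establish a local-in-time uniqueness statement and then propagate it to all of $[0,\infty)$. Let $u$ be any global weak solution with $u|_{t=0}=v_0:=v|_{t=0}$, and set $\phi:=u-v$ and $\psi:=w_u-w_v=-\nabla_H\cdot\int_{-h}^z\phi\,d\xi$, where $w_u,w_v$ denote the vertical velocities associated to $u,v$. I will first prove that $\phi\equiv 0$ on $[0,T_v']$; granting this, global uniqueness follows. Indeed, since $v\in L^2_{\mathrm{loc}}([0,\infty);H^1_{\mathrm{per}}(\Omega)\cap\mathcal H)$, one may pick $t_0\in(0,T_v')$ with $v(t_0)\in H^1_{\mathrm{per}}(\Omega)\cap\mathcal H$; by the global regularity of strong solutions (Cao--Titi \cite{CAOTITI2,CAOTITI3}) the strong solution $\mathcal V$ issuing from $v(t_0)$ is global and satisfies $\partial_z\mathcal V\in L^\infty_{\mathrm{loc}}(L^2)\cap L^2_{\mathrm{loc}}(H^1_{\mathrm{per}})$, so on every $[t_0,T]$ it meets the hypotheses of the theorem with bounded part $\equiv 0$; hence the same local argument (now with the smallness interval extending to all of $[t_0,T]$, since the bounded part vanishes) forces every weak solution with initial datum $v(t_0)$ to equal $\mathcal V$ on $[t_0,T]$, for every $T$. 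In particular both $v|_{[t_0,\infty)}$ and $u|_{[t_0,\infty)}$ (the latter because $u(t_0)=v(t_0)$ by the first part) coincide with $\mathcal V$ on $[t_0,\infty)$, so $u\equiv v$ on $[t_0,\infty)$, which together with the first part gives $u\equiv v$ on $[0,\infty)$.

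For the local uniqueness I run a weak--strong type energy argument. Combining the weak formulations of $u$ and $v$ with the energy inequalities in Definition~\ref{def1.1}(iii)--(iv), one derives on $(0,T_v')$ the differential inequality
$$
\frac12\frac{d}{dt}\|\phi\|_2^2+\|\nabla\phi\|_2^2\le\left|\int_\Omega\big[(\phi\cdot\nabla_H)\phi+\psi\,\partial_z\phi\big]\cdot v\,d\textbf{x}\right|,
$$
the pressure term dropping since $\phi,u,v\in\mathcal H$, the Coriolis term by skew-symmetry, and the ``diagonal'' transport terms $\langle(v\cdot\nabla_H)\phi+w_v\partial_z\phi,\phi\rangle$ and $\langle(\phi\cdot\nabla_H)\phi+\psi\,\partial_z\phi,\phi\rangle$ vanishing by the divergence-free structure. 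The crucial point is that the surviving nonlinear term carries no derivative on the ``outer'' field $v$, so the merely bounded (possibly discontinuous) part $V$ enters only as an $L^\infty$ multiplier. That this whole manipulation is legitimate---using $v$ and $u$ as mutual test functions, and the chain rule for $t\mapsto\langle u(t),v(t)\rangle$---is where the extra regularity $\partial_z\bar v\in L^\infty(0,T_v';L^2)\cap L^2(0,T_v';H^1_{\mathrm{per}})$, together with $v$ being a weak solution and $V\in L^\infty$, is used; the verification parallels that for $z$-weak solutions \cite{BGMR03,PTZ09,TACHIM} and for weak solutions with continuous data \cite{KPRZ}.

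Writing $v=\bar v+V$, the $V$-contribution is bounded by $C\|V\|_\infty\big(\|\phi\|_2\|\nabla\phi\|_2+\|\psi\|_2\|\partial_z\phi\|_2\big)$ with $\|\psi\|_2\le C(h)\|\nabla_H\phi\|_2$, hence by $C(h)\|V\|_\infty\|\nabla\phi\|_2^2+\tfrac18\|\nabla\phi\|_2^2+C\|V\|_\infty^2\|\phi\|_2^2$; choosing $\varepsilon_0=\varepsilon_0(h)$ so small that $C(h)\varepsilon_0\le\tfrac18$, the gradient terms here are absorbed into $\|\nabla\phi\|_2^2$ on the left. It is precisely the primitive-equations term $\int\psi\,\partial_z\phi\cdot V$ that cannot be controlled by $\|\phi\|_2^2$ plus a small multiple of $\|\nabla\phi\|_2^2$ unless $\|V\|_\infty$ is small, and this is the source of the smallness hypothesis. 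For the $\bar v$-contribution I use the barotropic--baroclinic decomposition $\bar v=\bar v^{bc}+\bar v^{bt}$, $\bar v^{bt}(\textbf{x}^H)=\frac1{2h}\int_{-h}^h\bar v\,dz$: in $\int\psi\,\partial_z\phi\cdot\bar v$ one integrates by parts in $z$ to transfer $\partial_z$ onto $\bar v$, invoking the controlled quantity $\partial_z\bar v\in L^2(H^1_{\mathrm{per}})$; the baroclinic part obeys $\|\bar v^{bc}(\textbf{x}^H,\cdot)\|_{L^\infty_z}\le C\|\partial_z\bar v(\textbf{x}^H,\cdot)\|_{L^2_z}$, so---by anisotropic Hölder inequalities and the two-dimensional Ladyzhenskaya inequality---all terms containing it are bounded by $\tfrac18\|\nabla\phi\|_2^2+g(t)\|\phi\|_2^2$ with $g\in L^1(0,T_v')$, the decisive facts being $\|\partial_z\bar v\|_2\in L^\infty_t$ and $\|\partial_z\bar v\|_{H^1}^2\in L^1_t$; finally $\bar v^{bt}=v^{bt}-V^{bt}$, where $V^{bt}$ is again an $L^\infty$ multiplier (treated as above) and the barotropic mode $v^{bt}\in L^\infty_{\mathrm{loc}}(L^2_{xy})\cap L^2_{\mathrm{loc}}(H^1_{xy})$ inherits two-dimensional Leray--Hopf regularity from $v$ being a weak solution, which is enough to close the corresponding planar estimates. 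Collecting everything yields $\frac{d}{dt}\|\phi\|_2^2\le 2g(t)\|\phi\|_2^2$ on $(0,T_v')$ with $g\in L^1(0,T_v')$; since $\|\phi(0)\|_2=0$, Grönwall's inequality gives $\phi\equiv 0$ on $[0,T_v']$.

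The main obstacle is the term $\int_\Omega\psi\,\partial_z\phi\cdot\bar v\,d\textbf{x}$: the vertical velocity $\psi$ costs one horizontal derivative of $\phi$, whereas only $\partial_z\bar v$---not the full $\nabla\bar v$---is available, so a direct estimate fails; this is exactly the derivative-loss difficulty specific to the primitive equations, resolved here by the integration by parts in $z$ together with the barotropic--baroclinic splitting, treating the baroclinic mode through the vertical Poincaré inequality and the barotropic mode through planar estimates plus the $L^\infty$-smallness of $V^{bt}$. A secondary, more technical, obstacle is the rigorous justification of the energy identity for $\phi$ (admissibility of the test functions and absolute continuity of $\langle u,v\rangle$), which is again where the hypothesis on $\bar v$ is needed and which is carried out as in the cited works.
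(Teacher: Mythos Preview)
Your treatment of the nonlinear terms is correct and takes a somewhat different route from the paper. After the common split $v=\bar v+V$, you handle the $\bar v$-contribution by integrating by parts in $z$ on the vertical-transport piece and then using a barotropic/baroclinic decomposition of $\bar v$, controlling the baroclinic mode through the vertical Poincar\'e bound $\|\bar v^{bc}\|_{L^\infty_z}\le C\|\partial_z\bar v\|_{L^2_z}$ and the barotropic mode through two-dimensional Ladyzhenskaya estimates together with the $L^\infty$-smallness of $V^{bt}$. The paper instead integrates by parts once to put the full gradient on $\bar v$, reaching $-\int(U\cdot\nabla)\bar v\cdot\phi$, and then replaces $|\nabla_H\bar v(\textbf{x}^H,z)|$ by its vertical average plus $\int|\nabla_H\partial_z\bar v|\,dz'$ and applies the anisotropic product estimate of Lemma~\ref{lad}. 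This is shorter and avoids the barotropic splitting entirely. The handling of the $V$-contribution and the origin of the smallness condition are essentially the same in both arguments.

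Where your proposal has a genuine gap is the step you label ``secondary'': the justification of the differential energy inequality for $\phi$ and the legitimacy of using $u,v$ as mutual test functions. Your claim that this ``parallels'' the $z$-weak and continuous-data arguments does not hold: in those settings the \emph{full} special solution carries the extra regularity (e.g.\ $\partial_z v\in L^\infty_tL^2_x\cap L^2_tH^1_x$ for $z$-weak solutions, or the $H^2$ approximant in \cite{KPRZ}), whereas here only $\partial_z\bar v$ is controlled and $\partial_z V$ has no bound at all, so $v$ is \emph{not} a $z$-weak solution and those justifications do not transfer. Moreover, for a generic weak solution the vertical-velocity term obstructs $\partial_t u$ from lying in $(H^1\cap\mathcal H)'$, so the Lions--Magenes chain rule is unavailable. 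The paper resolves this by a substantial preliminary you omit: it proves (Corollary~\ref{corinterior}, via the weak--strong uniqueness of Proposition~\ref{apppropwsu}) that \emph{every} weak solution belongs to $C^\infty(\bar\Omega\times(0,\infty))$. With this in hand both $u$ and $v$ are smooth on $[s,t]$ for each $s>0$, the energy manipulation is trivially rigorous there, and one sends $s\to0^+$ using Proposition~\ref{propcont}. The same interior smoothness makes the extension past $T_v'$ immediate (both solutions are strong on $[T_v',\infty)$ with the same data at $T_v'$), replacing your more circuitous propagation argument.
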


To prove Theorem \ref{thmwsu}, we first show that any weak solution
to the primitive equations is smooth away from initial time, see Corollary \ref{corinterior}, below,
which can be intuitively seen by noticing that weak solution has
$H^1_\text{per}$ regularity, at almost any time away from the initial time, and
recalling that, for any $H^1_\text{per}$ initial data, there is a unique global
strong solution to the primitive equations; however, in order to
rigorously prove this fact, we need the the weak-strong uniqueness
result for the primitive equations, see Proposition \ref{apppropwsu},
below, where we adopt the idea of Serrin \cite{SERRIN}. Since weak
solutions are smooth away from the initial time, one can perform the
energy estimates to the difference system between two weak solutions,
on any finite time interval away from the initial time. Next, by using
the decomposition stated in the theorem, and handling the nonlinear
terms involving each parts in their own ways, we then achieve the
uniqueness.

It should be pointed out that one can not expect that all weak
solutions to the primitive equations, with general initial data in
$\mathcal H$, have the decomposition as stated in the above theorem.
In fact, by the weakly lower semi-continuity of the norms, in order
to have such decomposition, it is necessary to require that the
initial data $v_0$ has the decomposition $v_0=\bar v_0+V_0$, with
$\partial_z\bar v_0\in L^2$ and $V_0\in L^\infty$. Observing this, it is
necessary to state the following theorem on the global existence and
uniqueness of weak solutions to the primitive equations with such kind
initial data.

\begin{theorem}\label{thmain}
Suppose that the initial data $v_0=\bar v_0+V_0$, with $\bar v_0\in
X\cap\mathcal H$ and $V_0\in L^\infty(\Omega)\cap\mathcal H$. Then the
following hold:

(i) There is a global weak solution $v$ to system (\ref{main1})--(\ref{ic}),
such that
\begin{eqnarray*}
v(\textbf{x},t)=\bar v(\textbf{x},t)+V(\textbf{x},t), \quad\textbf{x}\in\Omega, t\in(0,\infty),\\
\partial_z\bar v\in L_{loc}^\infty([0,\infty);L^2(\Omega))\cap L_{loc}^2([0,\infty); H^1_\text{per}(\Omega)),\\
\sup_{0<s<t}\|V\|_\infty(s)\leq \mu(t)\|V_0\|_\infty,\quad t\in(0,\infty),
\end{eqnarray*}
where
$$
\mu(t)=C_0(1+\|v_0\|_4)^{40}(t+1)^2\exp\{C_0e^{2t}(t+1)(1+\|v_0\|_4)^4\},
$$
for some positive constant $C_0$ depending only on $h$;

(ii) Let $\varepsilon_0$ be the positive constant in Theorem \ref{thmwsu}, then the above weak solution is unique, provided
$\mu(0)\|V_0\|_\infty\leq\frac{\varepsilon_0}{2}$.
\end{theorem}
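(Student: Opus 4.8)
The plan is to construct the two components of the decomposition separately and then reassemble them.

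\emph{Part (i).} Since $\bar v_0\in X\cap\mathcal H$, the existence and uniqueness theory for $z$-weak solutions (Bresch et al.\ \cite{BGMR03}, Petcu--Temam--Ziane \cite{PTZ09}, Tachim Madjo \cite{TACHIM}) yields a unique global $z$-weak solution $\bar v$ of (\ref{main1})--(\ref{ic}) with initial datum $\bar v_0$, satisfying $\bar v\in C([0,\infty);L^2(\Omega))\cap L^2_{loc}([0,\infty);H^1_{\text{per}}(\Omega)\cap\mathcal H)$ and, crucially, $\partial_z\bar v\in L^\infty_{loc}([0,\infty);L^2(\Omega))\cap L^2_{loc}([0,\infty);H^1_{\text{per}}(\Omega))$; moreover, since $\bar v_0\in L^4(\Omega)$, Proposition~3.1 of \cite{CAOLITITI3} (applied through a regularization) gives $\bar v\in L^\infty_{loc}([0,\infty);L^4(\Omega))$ with a bound quantified by $t$ and the initial data. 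Inserting the ansatz $v=\bar v+V$ into (\ref{main1})--(\ref{ic}), subtracting the equations solved by $\bar v$, and writing $W=-\nabla_H\cdot\int_{-h}^z V\,d\xi$, the remainder $V$ solves
$$
\partial_tV+((\bar v+V)\cdot\nabla_H)V+(\bar w+W)\partial_zV+(V\cdot\nabla_H)\bar v+W\partial_z\bar v+\nabla_Hq-\Delta V+f_0k\times V=0,
$$
subject to $\nabla_H\cdot\int_{-h}^hV\,dz=0$ and $V|_{t=0}=V_0\in L^\infty(\Omega)\cap\mathcal H$, where $q=q(\textbf{x}^H,t)$ is the pressure enforcing the constraint. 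I would build a global weak solution $V$ of this system by a Galerkin scheme with $\bar v$ as a known coefficient; the $L^2$-energy estimate --- in which the coupling terms $\int(V\cdot\nabla_H)\bar v\cdot V$ and $\int W\partial_z\bar v\cdot V$ are absorbed using the regularity of $\partial_z\bar v$, the $L^4$-bound on $\bar v$, and anisotropic Sobolev inequalities, while the transport terms cancel by the divergence-free structure of $(\bar v+V,\bar w+W)$ and the pressure drops out by the constraint --- gives $V\in L^\infty_{loc}([0,\infty);L^2)\cap L^2_{loc}([0,\infty);H^1_{\text{per}})$. Setting $v=\bar v+V$ and passing to the Galerkin limit then produces a global weak solution of (\ref{main1})--(\ref{ic}) with datum $v_0$ in the sense of Definition~\ref{def1.1} (the differential and energy inequalities being recovered by testing the equation for $v$ against $v$ itself) and having the asserted decomposition --- once the $L^\infty$-bound on $V$ is established.

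That $L^\infty$-bound, $\sup_{0<s<t}\|V\|_\infty(s)\le\mu(t)\|V_0\|_\infty$, is the crux and the main obstacle. A plain maximum principle is unavailable because of the pressure gradient $\nabla_Hq$. To remove it I would pass to the barotropic--baroclinic decomposition $V=\overline V+\widetilde V$, with $\overline V(\textbf{x}^H,t)=\frac{1}{2h}\int_{-h}^hV\,dz$ and $\widetilde V=V-\overline V$: since $q$ is independent of $z$, subtracting the vertical average of the $V$-equation leaves an equation for $\widetilde V$ with no pressure term. Testing that equation against $|\widetilde V|^{p-2}\widetilde V$, using the divergence-free drift to kill the pure transport contributions, and controlling the remaining forcing --- which involves $\bar v$, $\overline V$, $W$ and vertical averages --- by $\|\partial_z\bar v\|_{H^1}$, $\|\bar v\|_4$, norms of $\overline V$, and $\|V\|_{L^p}$, yields a differential inequality for $\|\widetilde V\|_p$; the barotropic part $\overline V$ in turn solves a two-dimensional viscous equation on $M$ of Navier--Stokes type with lower-order forcing built from $\widetilde V$ and $\bar v$, whose $L^p(M)$-bounds follow from two-dimensional parabolic theory. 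Treating these as a coupled system of differential inequalities, passing to the limit $p\to\infty$, and invoking Gronwall's inequality produces the bound with $\mu(t)$ of the stated form --- the power $40$ and the double exponential encoding the iteration of these nested estimates and the growth in time of the norms of $\bar v$; a standard continuation argument takes care of the mild circularity in estimating $\|\bar v\|_4$ through $v=\bar v+V$.

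\emph{Part (ii)} follows at once from Theorem~\ref{thmwsu}. The solution $v$ from part (i) has, on every finite interval $(0,T)$, the decomposition $v=\bar v+V$ with $\partial_z\bar v\in L^\infty(0,T;L^2(\Omega))\cap L^2(0,T;H^1_{\text{per}}(\Omega))$ and $V\in L^\infty(\Omega\times(0,T))$, so the structural hypothesis of Theorem~\ref{thmwsu} holds for every finite $T_v$. Since $t\mapsto\mu(t)$ is continuous and increasing and $\mu(0)\|V_0\|_\infty\le\varepsilon_0/2<\varepsilon_0$, there is $T_v'>0$ with $\mu(T_v')\|V_0\|_\infty\le\varepsilon_0$, whence $\sup_{0<t<T_v'}\|V\|_\infty(t)\le\mu(T_v')\|V_0\|_\infty\le\varepsilon_0$; choosing any finite $T_v>T_v'$, Theorem~\ref{thmwsu} shows that $v$ is the unique global weak solution of (\ref{main1})--(\ref{ic}) with initial datum $v_0$.
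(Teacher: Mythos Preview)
Your argument for Part~(ii) matches the paper's and is correct. The problem is entirely in Part~(i): your decomposition is the one the paper explicitly rules out, and the $L^\infty$ estimate on $V$ cannot be closed along the lines you sketch.

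You take $\bar v$ to be the $z$-weak solution of the \emph{nonlinear} primitive equations with data $\bar v_0\in X$, so that $V=v-\bar v$ satisfies a nonlinear perturbation equation carrying the forcing $(V\cdot\nabla_H)\bar v + W\partial_z\bar v$ with $W=-\int_{-h}^z\nabla_H\cdot V\,d\xi$. When you test against $|V|^{p-2}V$ the diffusion produces $\big\||V|^{p/2-1}\nabla V\big\|_2^2$, but the term $W\partial_z\bar v$ yields
\[
\int_M\Big(\int_{-h}^h|\nabla_H V|\,dz\Big)\Big(\int_{-h}^h|\partial_z\bar v|\,|V|^{p-1}\,dz\Big)d\textbf{x}^H,
\]
in which the gradient factor $\int_{-h}^h|\nabla_H V|\,dz$ carries no $|V|^{p/2-1}$ weight and hence cannot be absorbed into the dissipation uniformly in $p$; integrating by parts in $z$ or in $\textbf{x}^H$ only shuffles the difficulty to terms involving $|V|^{p-2}\nabla V$ or $|V|^{p-1}$ paired with $\partial_z\bar v$, which again fail to close without an $L^\infty_tH^2_{\textbf{x}}$-type bound on $\bar v$. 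Your barotropic--baroclinic split removes the pressure from the $\widetilde V$-equation but leaves this forcing untouched. This is exactly the obstruction the paper isolates in its introduction: the decomposition of \cite{KPRZ} (nonlinear $\bar v$, nonlinear $V$) needs $\bar v\in L^\infty_tH^2_{\textbf{x}}$ to push through the $L^\infty$ bound on $V$, and the primitive equations with data merely in $X$ do not supply that regularity.

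The paper's fix is a genuinely different decomposition. After mollifying $v_0$ and obtaining the smooth solution $v_\varepsilon$, it lets \emph{both} $\bar v_\varepsilon$ and $V_\varepsilon$ solve the \emph{same linear} system
\[
\partial_t\Phi+(v_\varepsilon\cdot\nabla_H)\Phi+w_\varepsilon\partial_z\Phi+\nabla_HQ-\Delta\Phi+f_0k\times\Phi=0,\qquad \int_{-h}^h\nabla_H\cdot\Phi\,dz=0,
\]
with the full $(v_\varepsilon,w_\varepsilon)$ as a known divergence-free drift, and with initial data $\bar v_{0\varepsilon}$ and $V_{0\varepsilon}$ respectively; linearity gives $\bar v_\varepsilon+V_\varepsilon=v_\varepsilon$. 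The equations for $\bar v_\varepsilon$ and $V_\varepsilon$ are now decoupled and carry \emph{no} cross terms or nonlinear forcing. Testing the $V_\varepsilon$-equation against $|V_\varepsilon|^{q-2}V_\varepsilon$, the transport vanishes and only the pressure survives; since $-\Delta_HP_\varepsilon=\frac{1}{2h}\int_{-h}^h[\nabla_H\cdot\nabla_H\cdot(V_\varepsilon\otimes v_\varepsilon)+f_0\nabla_H\cdot(k\times V_\varepsilon)]\,dz$ is linear in $V_\varepsilon$, two-dimensional elliptic estimates give $\|P_\varepsilon\|_{L^{4q/(q+2)}(M)}\le C(1+\|v_\varepsilon\|_4)\|V_\varepsilon\|_{2q}$, and a Moser-type iteration (Lemma~\ref{iterate}, Proposition~\ref{prop3.3}) produces the $L^\infty$ bound with exactly the stated $\mu(t)$; the exponent $40$ comes from the Sobolev and Young exponents in that iteration, not from any coupled system of inequalities. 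The $X$-estimate on $\bar v_\varepsilon$ (Proposition~\ref{prop3.4}) is equally clean for the same reason. One then passes to the limit $\varepsilon\to0$ via Aubin--Lions to obtain the weak solution $v=\bar v+V$ with the required structure.
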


The uniqueness part of Theorem \ref{thmain} is a direct consequence
of the estimate in (i) and Theorem \ref{thmwsu}.
So the key ingredient of the proof of
Theorem \ref{thmain} is to find the required decomposition. Note that
in \cite{KPRZ}, the authors decompose the weak solution $v$ into a regular
part $\bar v$, which is a strong solution to the primitive equations,
with $H^2$ initial data, and a small perturbation bounded part $V$, which satisfies
a nonlinear system, with small initial data in $L^\infty$. Noticing that
the initial data $v_0$ considered in this paper can be discontinuous, and
recalling the Sobolev embedding relation $H_\text{per}^2(\Omega)\hookrightarrow
C_\text{per}(\Omega)$, hence $v_0$ cannot be approximated in the $L^\infty$ norm by
$H^2$ functions, and consequently the decomposition used in \cite{KPRZ} does not apply to our case.
One may still use the same decomposition as in \cite{KPRZ}, but change the initial data
for $\bar v$ and $V$ in $X$ and $L^\infty$, respectively; however,
if by taking this approach, we are not able to obtain the $L^\infty$
estimates on $V$, because the $L^\infty_t(H^2_\textbf{x})$ estimate on $\bar v$
plays an essential role in deriving such estimates on $V$, and it
is obvious
that primitive equations with initial data in $X$ does not necessary
provide the required $L^\infty_t(H^2_\textbf{x})$ estimate on $\bar v$. We also note that the
arguments used in \cite{BGMR03,PTZ09,TACHIM} require the regularity of
the kind $\partial_zv\in L^\infty_t(L^2_\textbf{x})\cap L^2_t(H^1_\textbf{x})$ for the $z$-weak
solutions, and thus still cannot be applied to our case.

To obtain the desired decomposition on $v$, we use a different kind
of decomposition from that used in \cite{KPRZ}. Indeed (ignoring
the regularization procedure to justify the arguments), we decompose
the weak solution $v$ into a ``regular" part $\bar v$, which is the
unique solution to a linearized primitive equations (i.e.~system
(\ref{3.1})--(\ref{3.2}), below), with initial data in $X$, and the
remaining part $V$, which satisfies the same system as that for $\bar
v$, but with initial data in $L^\infty$. The key observation is that both the $L^\infty$ and the $X$ regularities are
preserved by the linearized primitive equations, i.e.~systems
(\ref{3.5})--(\ref{3.6}) and (\ref{3.1})--(\ref{3.2}), below.
Indeed, by making use of such kind of decomposition, we are able
to weaken the assumption on the initial data, without destroying the
uniqueness of the weak solutions. We point out that, compared with the
decomposition introduced in \cite{KPRZ}, in our decomposition systems,
i.e.~systems
(\ref{3.5})--(\ref{3.6}) and (\ref{3.1})--(\ref{3.2}), there are no nonlinear terms and
no cross terms between $\bar v$ and $V$, and moreover, the systems for $\bar v$ and $V$ are
decoupled and are independent of each other. As will be shown later, see Proposition
\ref{prop3.3} and Proposition \ref{prop3.4}, below, the estimates for
$\bar v$ and $V$ in $X$ and $L^\infty$, respectively, are global in
time, and thus no local in time arguments are required for deriving
these estimates.

As an application of Theorem \ref{thmain}, we have the following result, which generalizes the results in \cite{BGMR03,TACHIM,PTZ09,KPRZ}.

\begin{corollary}\label{thm1.2}
For any $v_0\in X\cap\mathcal H$ (or $v_0\in C_\text{per}(\Omega)\cap\mathcal H$), there is a constant $\sigma_0$, depending only on $h$ and the upper bound of $\|v_0\|_4$, such that for any $ \mathscr V_0=v_0+V_0$, with $V_0\in L^\infty(\Omega)\cap\mathcal H$ and $\|V_0\|_\infty\leq\sigma_0$, system (\ref{main1})--(\ref{ic}) with initial data $\mathscr V_0$ has a unique weak solution, which has the regularities stated in Theorem \ref{thmain}.
\end{corollary}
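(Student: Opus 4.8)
The plan is to obtain Corollary \ref{thm1.2} directly from Theorem \ref{thmain}; the only real work is to manufacture, for the prescribed initial datum $\mathscr V_0=v_0+V_0$, an admissible decomposition $\mathscr V_0=\bar v_0+\tilde V_0$ with $\bar v_0\in X\cap\mathcal H$, $\tilde V_0\in L^\infty(\Omega)\cap\mathcal H$, and $\mu(0)\|\tilde V_0\|_\infty\leq\frac{\varepsilon_0}{2}$, where $\mu$ is the function in Theorem \ref{thmain} associated with the total initial datum $\mathscr V_0$, and $\varepsilon_0$ is the constant of Theorem \ref{thmwsu} (which depends only on $h$). Note first that $\mathscr V_0\in\mathcal H$, since both $v_0$ and $V_0$ lie in $\mathcal H$, so the notion of weak solution makes sense.

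The first step is to bound $\mu(0)$ uniformly. Fix an upper bound $R$ for $\|v_0\|_4$ (finite in both cases, since $X\subset L^6(\Omega)\subset L^4(\Omega)$ and $C_\text{per}(\Omega)\subset L^4(\Omega)$ on the bounded domain $\Omega$), and impose from the outset $\sigma_0\leq1$. Then for every admissible $V_0$ one has $\|\mathscr V_0\|_4\leq\|v_0\|_4+|\Omega|^{1/4}\|V_0\|_\infty\leq R+(2h)^{1/4}=:R'$, whence
$$
\mu(0)=C_0(1+\|\mathscr V_0\|_4)^{40}\exp\{C_0(1+\|\mathscr V_0\|_4)^4\}\leq C_0(1+R')^{40}\exp\{C_0(1+R')^4\}=:M,
$$
and $M$ depends only on $h$ and $R$ because $C_0$ depends only on $h$. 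This is exactly what forces $\sigma_0$ to depend only on $h$ and $R$, and not on the individual $\mathscr V_0$.

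Next comes the decomposition. If $v_0\in X\cap\mathcal H$, take $\bar v_0:=v_0$ and $\tilde V_0:=V_0$. If instead $v_0\in C_\text{per}(\Omega)\cap\mathcal H$---a space not contained in $X$---take $\bar v_0:=v_0*\rho$, where $\rho=\rho^H(\textbf{x}^H)\rho^z(z)$ is a periodic product mollifier with $\rho^z$ even, the mollification parameter chosen small enough (possible since $v_0$ is uniformly continuous on the torus) that $\|v_0-\bar v_0\|_\infty\leq\frac{\varepsilon_0}{4M}$. One checks that $\bar v_0\in X\cap\mathcal H$: it is smooth, hence in $X$; periodicity is preserved; evenness in $z$ is preserved because $\rho^z$ and $v_0$ are both even in $z$; and the constraint $\nabla_H\cdot\int_{-h}^h v_0\,dz=0$ is preserved since convolution commutes with $\nabla_H$ and, by the $z$-periodicity of $v_0$, with integration over $(-h,h)$. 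In either case set $\tilde V_0:=\mathscr V_0-\bar v_0=(v_0-\bar v_0)+V_0$; as $\mathcal H$ is a linear space and both summands lie in $L^\infty(\Omega)\cap\mathcal H$, so does $\tilde V_0$, with $\|\tilde V_0\|_\infty\leq\|v_0-\bar v_0\|_\infty+\|V_0\|_\infty\leq\frac{\varepsilon_0}{4M}+\sigma_0$ (the first term vanishing in the case $v_0\in X\cap\mathcal H$).

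Finally, set $\sigma_0:=\min\{1,\frac{\varepsilon_0}{4M}\}$. Then $\|\tilde V_0\|_\infty\leq\frac{\varepsilon_0}{2M}$, so $\mu(0)\|\tilde V_0\|_\infty\leq M\cdot\frac{\varepsilon_0}{2M}=\frac{\varepsilon_0}{2}$, and Theorem \ref{thmain} applies with the decomposition $\mathscr V_0=\bar v_0+\tilde V_0$: part (i) gives a global weak solution with initial datum $\mathscr V_0$ having the regularities stated there, and part (ii) gives its uniqueness among all weak solutions with the same initial datum. Since $\varepsilon_0$ depends only on $h$ and $M$ only on $h$ and $R$, the threshold $\sigma_0$ depends only on $h$ and the upper bound of $\|v_0\|_4$, as asserted. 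The only non-routine points are the uniform bound on $\mu(0)$---which is why one fixes $\sigma_0\leq1$ first and only afterwards shrinks it---and, in the continuous case, the verification that mollification keeps the approximant inside $\mathcal H$; everything else is bookkeeping with the constants inherited from Theorems \ref{thmwsu} and \ref{thmain}.
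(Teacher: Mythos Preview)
Your proof is correct and follows essentially the same approach as the paper's: in the case $v_0\in X\cap\mathcal H$ you apply Theorem~\ref{thmain} directly with the obvious decomposition, and in the continuous case you first mollify $v_0$ into $X\cap\mathcal H$ and absorb the remainder into the $L^\infty$ part. The only organisational difference is that you bound $\mu(0)$ uniformly at the outset by imposing $\sigma_0\leq1$, whereas the paper packages this step as the continuity of an auxiliary function $\rho(s)$ at $s=0$; in the continuous case the paper reduces to the $X$ case rather than treating both in parallel, but the content is the same.
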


\begin{proof}
\textbf{(i) The case $v_0\in X\cap\mathcal H$.} Let $\varepsilon_0$ be the constant in Theorem \ref{thmwsu}.
Recalling that $\mathscr V_0=v_0+V_0$, by the triangle inequality for the norms and the H\"older inequality, one can easily check that
\begin{align*}
  C_0(1+\|\mathscr V_0\|_4)^{40}\exp\{C_0 (1+\|\mathscr V_0\|_4)^4\}\|V_0\|_\infty
  \leq\rho(\|V_0\|_\infty),
\end{align*}
where
$$
\rho(s)=C_0(1+\| v_0\|_4+(2h)^{\frac{1}{4}}s)^{40}\exp\{C_0(1+\| v_0\|_4+(2h)^{\frac{1}{4}}s)^4\}s.
$$
Noticing that $\rho$ is a continuous function, with $\rho(0)=0$, and $C_0$ is a positive constant depending only on $h$, there is a positive constant $\sigma_0$ depending only on $h$ and the upper bound of $\|v_0\|_4$, such that $\rho(s)\leq\varepsilon_0/2,$ for any $s\in[0,\sigma_0]$. Thus, for any $V_0\in L^\infty(\Omega)\cap\mathcal H$, with $\|V_0\|_\infty\leq\sigma_0$, we have
$$
C_0(1+\|\mathscr V_0\|_4)^{40}\exp\{C_0 (1+\|\mathscr V_0\|_4)^4\}\|V_0\|_\infty\leq \rho(\|V_0\|_\infty)\leq\varepsilon_0/2,
$$
and consequently, the conclusion follows by (ii) of Theorem \ref{thmain}, by viewing $(\mathscr V_0, v_0, V_0)$ as the $(v_0, \bar v_0, V_0)$ in Theorem \ref{thmain}.

\textbf{(ii) The case $v_0\in C_\text{per}(\Omega)\cap\mathcal H$.} Let $\sigma_0$ be the same positive constant as in (i). We are going to show that the conclusion holds in this case for the new $\sigma_0^*:=\frac{\sigma_0}{2}$. Suppose that
$$
\|V_0\|_\infty\leq\sigma_0^*.
$$
Let $j_\eta$ be a standard mollifier,
i.e.\,$j_\eta(\textbf{x})=\frac{1}{\eta^3}j(\frac{\textbf{x}}{\eta})$,
with $0\leq j\in C_0^\infty(\mathbb R^3)$ and
$\int_{\mathbb R^3}jd\textbf{x}=1$. Since $v_0\in C_\text{per}(\Omega)$, we
have
$$
\tilde v_0*j_\eta\rightarrow v_0,\quad\mbox{ in }C_\text{per}(\Omega), \mbox{ as }\eta\rightarrow 0^+,
$$
where $\tilde v_0$ is the periodic extension of $v_0$ to the whole
space. Choose $\eta_0\leq\min\{1,2h\}$ to be small enough, such that $\|v_0-\tilde v_0*j_{\eta_0}\|_\infty\leq\sigma_0^*$, and set $\bar v_0=\tilde v_0*j_{\eta_0}$. We have
$$
\mathscr V_0=v_0+V_0=\bar v_0+(v_0-\tilde v_0*j_{\eta_0})+V_0=:\bar v_0+\tilde V_0,
$$
where $\tilde V_0:=(v_0-\tilde v_0*j_{\eta_0})+V_0$.
Recalling that $\|V_0\|_\infty\leq\sigma_0^*,$ it follows that
$$
\|\tilde V_0\|_\infty\leq\|v_0-\tilde v_0*j_{\eta_0}\|_\infty +\|V_0\|_\infty
\leq\sigma_0^*+\sigma_0^*=\sigma_0.
$$
Therefore, the initial data $\mathscr V_0$ can be decomposed
as $\mathscr V_0=\bar v_0+\tilde V_0$, with
$\bar v_0=j_{\eta_0}*\tilde v_0\in X\cap\mathcal H$, $\|\bar v_0\|_4\leq9\|v_0\|_4$, and
$\|\tilde V_0\|_\infty\leq\sigma_0$, where $\sigma_0$ is the
same constant as in (i), which depends only on
$\|v_0\|_4$. By viewing $(\mathscr V_0, \bar v_0, \tilde V_0)$ as the $(\mathscr V_0, v_0, V_0)$ in case (i), it is clear that the assumptions in case (i) are fulfilled, and thus there is a unique weak solution
to the primitive equations with initial data $\mathscr V_0$.
\end{proof}

\begin{remark}
Given a constant vector $a=(a^1, a^2)$, and two positive numbers
$\delta$ and $\eta$, with $\eta\in(0,h)$. Set $v_0=a|z|^\delta$,
$V_0=\sigma\chi_{(-\eta,\eta)}(z)$, and $\mathscr V_0=v_0+V_0$, for $z\in(-h,h)$, with
$\sigma=(\sigma^1,\sigma^2)$, where $\chi_{(-\eta, \eta)}(z)$ is the
characteristic function of
the interval $(-h, h)$. Extend $v_0$ and $V_0$, and
consequently $\mathscr V_0$, periodically to the
whole space, and still use the same notations to denote the
extensions. Then, one can easily check that $v_0\in
C_\text{per}(\Omega)\cap\mathcal H$ and $V_0\in
L^\infty(\Omega)\cap\mathcal H$.
By Corollary \ref{thm1.2},
there is a positive constant
$\varepsilon_0=\varepsilon_0(a,\delta,\eta,h)$, such that,
for any $\sigma=(\sigma^1,\sigma^2)$, with
$0<|\sigma|\leq\varepsilon_0$, system (\ref{main1})--(\ref{ic})
has a unique weak solution, with initial data $\mathscr V_0$.
Note that
$$
\mathscr V_0=a|z|^\delta+\sigma\chi_{(-\eta,\eta)}(z), \quad z\in(-h,h).
$$
One can easily verify that $\mathscr V_0$ lies neither
in $X\cap\mathcal H$ nor
in $C_\text{per}(\Omega)\cap\mathcal H$, and thus the results
established in \cite{BGMR03,TACHIM,PTZ09,KPRZ} cannot be applied to
prove the uniqueness of weak solutions with such kind of initial data.
\end{remark}

The rest of this paper is arranged as follows: in section \ref{sec2},
we collect some preliminary results; in section
\ref{secinterior}, as a preparation of proving Theorem \ref{thmwsu},
we show the regularities, away from the initial time,
of weak solutions to the primitive equations;
the proof of Theorem \ref{thmwsu} is given in section
\ref{secwsu}; as a preparation of proving Theorem \ref{thmain}, we
derive some relevant a priori estimates of the solutions to the
primitive equations, with smooth initial data; the proof of Theorem
\ref{thmain} is given in the last section, section \ref{sec4}.

\section{Preliminaries}
\setcounter{tocdepth}{1}
\label{sec2}
In this section, we collect some preliminary results which will be used in the rest of this paper. We start with the following lemma, which can be proven in the same way as Proposition 2.2 in Cao--Titi \cite{CAOTITI1}, and thus we omit the proof of it here.

\begin{lemma}\label{lad}
The following inequalities hold
\begin{align*}
&\int_M\left(\int_{-h}^h|\phi(\textbf{x}^H,z) |dz\right)\left(\int_{-h}^h|\varphi(\textbf{x}^H,z) \psi(\textbf{x}^H,z) |dz\right)d\textbf{x}^H\\
\leq&C\|\phi\|_2\|\varphi\|_2^{\frac{1}{2}}\left(\|\varphi\|_2 +\|\nabla_H\varphi\|_{2}
\right)^{\frac{1}{2}}\|\psi\|_2^{\frac{1}{2}}\left(
\|\psi\|_2+\|\nabla_H\psi\|_2 \right)^{\frac{1}{2}},
\end{align*}
and
\begin{align*}
&\int_M\left(\int_{-h}^h|\phi(\textbf{x}^H,z) |dz\right)\left(\int_{-h}^h|\varphi(\textbf{x}^H,z) \psi(\textbf{x}^H,z) |dz\right)d\textbf{x}^H\\
\leq&C\|\phi\|_2^{\frac{1}{2}}\left(
\|\phi\|_2+\|\nabla_H\phi\|_2 \right)^{\frac{1}{2}} \|\varphi\|_2^{\frac{1}{2}}\left(\|\varphi\|_2 +\|\nabla_H\varphi\|_{2}
\right)^{\frac{1}{2}}\|\psi\|_2,
\end{align*}
for every $\phi,\varphi$ and $\psi$, such that the quantities on the right-hand sides make sense and are finite.
\end{lemma}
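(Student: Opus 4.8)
\emph{Proof strategy.} The plan is to prove both inequalities by the same scheme: reduce the integrals over $\Omega$ to integrals over $M$ using the Cauchy--Schwarz inequality in the vertical variable $z$, split the resulting product over $M$ by H\"older's inequality, and estimate the $L^4(M)$ factors by the two-dimensional Ladyzhenskaya inequality
\[
\|f\|_{L^4(M)}\le C\|f\|_{L^2(M)}^{1/2}\left(\|f\|_{L^2(M)}+\|\nabla_H f\|_{L^2(M)}\right)^{1/2}
\]
on the periodic square $M$. By a standard density argument it suffices to treat smooth periodic functions $\phi,\varphi,\psi$, the general case following since each side depends continuously on the norms occurring on the right.

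For the first inequality, I would set
\[
g(\textbf{x}^H)=\int_{-h}^h|\phi(\textbf{x}^H,z)|\,dz,\qquad a(\textbf{x}^H)=\left(\int_{-h}^h|\varphi(\textbf{x}^H,z)|^2\,dz\right)^{1/2},\qquad b(\textbf{x}^H)=\left(\int_{-h}^h|\psi(\textbf{x}^H,z)|^2\,dz\right)^{1/2}.
\]
By Cauchy--Schwarz in $z$ one has $\int_{-h}^h|\varphi\psi|\,dz\le a\,b$ pointwise in $\textbf{x}^H$, so the left-hand side is at most $\int_M g\,a\,b\,d\textbf{x}^H$, and H\"older's inequality in $\textbf{x}^H$ with exponents $(2,4,4)$ gives $\int_M g\,a\,b\,d\textbf{x}^H\le\|g\|_{L^2(M)}\|a\|_{L^4(M)}\|b\|_{L^4(M)}$. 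Here $\|g\|_{L^2(M)}\le(2h)^{1/2}\|\phi\|_2$ (again Cauchy--Schwarz in $z$), $\|a\|_{L^2(M)}=\|\varphi\|_2$, and $\|b\|_{L^2(M)}=\|\psi\|_2$. The key point is the gradient bound $\|\nabla_H a\|_{L^2(M)}\le\|\nabla_H\varphi\|_2$: differentiating under the integral sign and applying Cauchy--Schwarz in $z$ yields $|\nabla_H a|\le\big(\int_{-h}^h|\nabla_H\varphi|^2\,dz\big)^{1/2}$ for a.e.\ $\textbf{x}^H$, and likewise $\|\nabla_H b\|_{L^2(M)}\le\|\nabla_H\psi\|_2$. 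Substituting these bounds into the Ladyzhenskaya inequality applied to $a$ and to $b$, and absorbing the $h$-dependent constants into $C$, gives the first inequality.

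The second inequality follows from the same computation with the roles of the three factors permuted: one estimates $\int_M g\,a\,b\,d\textbf{x}^H\le\|b\|_{L^2(M)}\|g\|_{L^4(M)}\|a\|_{L^4(M)}$, uses $\|b\|_{L^2(M)}=\|\psi\|_2$, and bounds $\|g\|_{L^4(M)}$ by the Ladyzhenskaya inequality together with $\|g\|_{L^2(M)}\le(2h)^{1/2}\|\phi\|_2$ and $\|\nabla_H g\|_{L^2(M)}\le(2h)^{1/2}\|\nabla_H\phi\|_2$, the latter coming from $|\nabla_H g|\le\int_{-h}^h|\nabla_H\phi|\,dz$ and Cauchy--Schwarz in $z$. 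I expect the only genuinely delicate point of the argument to be the chain-rule manipulations --- differentiating $g$ under the integral sign and differentiating the square roots defining $a$ and $b$, whose arguments may vanish; this is handled in the usual way, by regularizing $|\cdot|$ as $\sqrt{\delta^2+|\cdot|^2}$ and letting $\delta\to0^+$, or by invoking the a.e.\ bound $\big|\nabla|f|\big|\le|\nabla f|$ valid for Sobolev functions. The scheme parallels the proof of Proposition 2.2 in Cao--Titi \cite{CAOTITI1}.
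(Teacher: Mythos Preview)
Your proposal is correct and follows exactly the approach the paper indicates: the paper omits the proof entirely, stating only that it ``can be proven in the same way as Proposition 2.2 in Cao--Titi \cite{CAOTITI1}'', which is precisely the scheme you have written out (Cauchy--Schwarz in $z$, H\"older in $\textbf{x}^H$ with exponents $(2,4,4)$ or $(4,4,2)$, and the two-dimensional Ladyzhenskaya inequality applied to the vertical $L^2$-traces). Your handling of the chain-rule issues for $a$, $b$, and $g$ is the standard one and is adequate here.
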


The next lemma will be used in Proposition \ref{prop3.3} to establish $L^\infty$ estimates for $V_\varepsilon$.

\begin{lemma}
\label{iterate}
Let $M_0\geq2$ and $\delta_0>0$ be two constants. Suppose that the sequence $\{A_k\}_{k=1}^\infty$, with $A_k\geq0$, satisfies
$$
A_1\leq M_0\delta_0^2, \quad A_{k+1}\leq M_0\delta_0^{2^{k+1}}+M_0^kA_k^2,\quad \mbox{for }k=1,2,\cdots.
$$
Then one has
$$
A_k\leq M_0^{-(k+2)}(M_0^4\delta_0^2)^{2^{k-1}},\quad k=1,2,\cdots,
$$
and in particular $A_k\leq (M_0^4\delta_0^2)^{2^{k-1}}$, for $k=1,2,\cdots.$
\end{lemma}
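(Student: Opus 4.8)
The plan is to prove the first, sharper bound $A_k\le M_0^{-(k+2)}(M_0^4\delta_0^2)^{2^{k-1}}$ by induction on $k$, and then to deduce the ``in particular'' statement for free: since $M_0\ge2$ we have $M_0^{-(k+2)}\le1$, so the sharper bound immediately gives $A_k\le(M_0^4\delta_0^2)^{2^{k-1}}$ for every $k\ge1$. Throughout I would set $X_k:=M_0^{-(k+2)}(M_0^4\delta_0^2)^{2^{k-1}}$; expanding the powers, $X_k=M_0^{2^{k+1}-k-2}\,\delta_0^{2^k}>0$, and similarly $X_{k+1}=M_0^{2^{k+2}-k-3}\,\delta_0^{2^{k+1}}$.

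\emph{Base case.} For $k=1$ the claimed bound reads $A_1\le X_1=M_0^{-3}\cdot M_0^4\delta_0^2=M_0\delta_0^2$, which is precisely the hypothesis $A_1\le M_0\delta_0^2$ (with equality in the estimate).

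\emph{Inductive step.} Assuming $A_k\le X_k$ for some $k\ge1$, I use $A_{k+1}\le M_0\delta_0^{2^{k+1}}+M_0^kA_k^2$ and show that each term on the right is at most $\tfrac12 X_{k+1}$, which yields $A_{k+1}\le X_{k+1}$ and closes the induction. For the first term, the inequality $M_0\delta_0^{2^{k+1}}\le\tfrac12X_{k+1}=\tfrac12 M_0^{2^{k+2}-k-3}\delta_0^{2^{k+1}}$ is equivalent to $2\le M_0^{2^{k+2}-k-4}$, which holds because the exponent $2^{k+2}-k-4$ equals $3$ at $k=1$ and is increasing in $k$, hence is $\ge1$, together with $M_0\ge2$. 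For the second term, the induction hypothesis and $X_k>0$ give $A_k^2\le X_k^2=M_0^{-2(k+2)}(M_0^4\delta_0^2)^{2^k}$, so $M_0^kA_k^2\le M_0^{-k-4}(M_0^4\delta_0^2)^{2^k}$, and the bound $M_0^{-k-4}(M_0^4\delta_0^2)^{2^k}\le\tfrac12X_{k+1}=\tfrac12 M_0^{-(k+3)}(M_0^4\delta_0^2)^{2^k}$ reduces to $2\le M_0$, again true.

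\emph{Expected difficulty.} There is no genuine obstacle here: the statement is a routine induction, and the only delicate point is the bookkeeping of the powers of $M_0$. The key design choice is the precise negative power $M_0^{-(k+2)}$ appearing in the asserted bound: it is tuned so that after squaring $A_k$ and multiplying by the loss factor $M_0^k$ one still retains a spare factor $M_0\ge2$, which is exactly what permits both the ``forcing'' term $M_0\delta_0^{2^{k+1}}$ and the ``quadratic'' term $M_0^kA_k^2$ to be absorbed into halves of the target $X_{k+1}$. Once this is observed, the proof collapses to checking the elementary inequalities $2^{k+2}-k-4\ge1$ and $M_0\ge2$.
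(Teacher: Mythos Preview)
Your proposal is correct and follows essentially the same induction as the paper: both define the target $X_k=M_0^{-(k+2)}(M_0^4\delta_0^2)^{2^{k-1}}$, verify the base case directly, and in the inductive step bound the quadratic term $M_0^kA_k^2$ and the forcing term $M_0\delta_0^{2^{k+1}}$ separately by $\tfrac12 X_{k+1}$ using $M_0\ge2$ and the elementary exponent inequality $2^{k+2}-k-4\ge1$. The only cosmetic difference is that the paper writes $M_0^kX_k^2=M_0^{-1}X_{k+1}$ before invoking $M_0\ge2$, and bounds the exponent via $4\cdot2^k\ge4(k+1)$ rather than by monotonicity.
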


\begin{proof}
Define $a_k=M_0^{-(k+2)}(M_0^4\delta_0^2)^{2^{k-1}}$, for $k=1,2,\cdots$. Then, it suffices to prove that $A_k\leq a_k$.
For $k=1$, it follows from the assumption that
$$
a_1=M_0^{-3}M_0^4\delta_0^2=M_0\delta_0^2\geq A_1.
$$
Suppose that $A_k\leq a_k$ holds, for some $k$. Then, by assumption, we have
\begin{align*}
  A_{k+1}\leq&M_0\delta_0^{2^{k+1}}+M_0^ka_k^2=M_0\delta_0^{2^{k+1}}+M_0^{-(k+4)}(M_0^4\delta_0^2)^{2^k}\\
  =&M_0\delta_0^{2^{k+1}}+M_0^{-1}a_{k+1}\leq M_0\delta_0^{2^{k+1}}+\frac{a_{k+1}}{2}.
\end{align*}
We will show that
$$
M_0\delta_0^{{2^{k+1}}}\leq\frac{a_{k+1}}{2},
$$
and as a consequence of the above, we obtain
$$
A_{k+1}\leq\frac{a_{k+1}}{2}+\frac{a_{k+1}}{2}=a_{k+1}.
$$
Therefore, by induction, the conclusion holds.

We now prove that $M_0\delta_0^{{2^{k+1}}}\leq\frac{a_{k+1}}{2},$ for $k=1,2,\cdots.$ Recalling the definition of $a_k$, it follows
$$
a_{k+1}=M_0^{-(k+3)}(M_0^4\delta_0^2)^{2^k}=M_0^{4\times 2^k-(k+3)}\delta_0^{2^{k+1}}.
$$
Using binomial expansion, we have
$$
4\times 2^k=4(1+1)^k=4\sum_{j=0}^k\begin{pmatrix}
                                    k \\
                                    j \\
                                  \end{pmatrix}
\geq4(1+k),
$$
and thus
$$
4\times 2^k-(k+3)\geq 4k+4-k-3=3k+1\geq 4.
$$
Therefore, we have
$$
a_{k+1}=M_0^{4\times 2^k-(k+3)}\delta_0^{2^{k+1}}\geq M_0^4\delta_0^{2^{k+1}}\geq 2^3M_0\delta_0^{2^{k+1}}\geq 2M_0\delta_0^{2^{k+1}}.
$$
This completes the proof.
\end{proof}

We also need the following Aubin--Lions lemma.

\begin{lemma}[cf. Simon \cite{Simon} Corollary 4]\label{AL}
Let $T\in(0,\infty)$ be given. Assume that $X, B$ and $Y$ are three Banach spaces, with $X\hookrightarrow\hookrightarrow B\hookrightarrow Y.$ Then it holds that

(i) If $F$ is a bounded subset of $L^p(0, T; X)$, where $1\leq p<\infty$, and $\frac{\partial F}{\partial t}=\left\{\frac{\partial f}{\partial t}|f\in F\right\}$ is bounded in $L^1(0, T; Y)$, then $F$ is relatively compact in $L^p(0, T; B)$;

(ii) If $F$ is bounded in $L^\infty(0, T; X)$ and $\frac{\partial F}{\partial t}$ is bounded in $L^r(0, T; Y)$, where $r>1$, then $F$ is relatively compact in $C([0, T]; B)$.
\end{lemma}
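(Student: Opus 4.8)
The plan is to reconstruct Simon's compactness theorem (\cite{Simon}) from the classical compactness criteria in Bochner spaces: a Fréchet--Kolmogorov criterion in $L^p(0,T;B)$ for part (i), and an Arzelà--Ascoli criterion in $C([0,T];B)$ for part (ii), the two analytic inputs being Ehrling's interpolation inequality and uniform time-translation estimates extracted from the bound on $\partial_t f$. First I would record Ehrling's lemma: since $X\hookrightarrow\hookrightarrow B\hookrightarrow Y$, for every $\eta>0$ there is $C_\eta>0$ with $\|v\|_B\leq\eta\|v\|_X+C_\eta\|v\|_Y$ for all $v\in X$. Indeed, if this failed for some $\eta_0>0$ there would be $v_n$ with $\|v_n\|_B=1$, $\|v_n\|_X\leq1/\eta_0$ and $\|v_n\|_Y\leq1/n$; compactness of $X\hookrightarrow B$ would give a $B$-limit $v$ of a subsequence with $\|v\|_B=1$, while $B\hookrightarrow Y$ together with $v_n\to0$ in $Y$ would force $v=0$, a contradiction.

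\emph{Part (i).} I would invoke the criterion that $F\subset L^p(0,T;B)$ ($1\leq p<\infty$) is relatively compact iff (a) $\left\{\int_{t_1}^{t_2}f(t)\,dt:f\in F\right\}$ is relatively compact in $B$ for all $0<t_1<t_2<T$, and (b) $\sup_{f\in F}\|f(\cdot+\sigma)-f(\cdot)\|_{L^p(0,T-\sigma;B)}\to0$ as $\sigma\to0^+$ (note $F$ is bounded in $L^p(0,T;B)$ since $X\hookrightarrow B$). For (a): by Hölder, $\left\|\int_{t_1}^{t_2}f\right\|_X\leq(t_2-t_1)^{1-1/p}\|f\|_{L^p(0,T;X)}\leq C$, so these integrals lie in a bounded, hence $B$-precompact, subset of $X$. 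For (b): since $X\hookrightarrow Y$, $f\in L^p(0,T;Y)\cap W^{1,1}(0,T;Y)$, so $f$ has an absolutely continuous $Y$-valued representative; picking $t_0$ with $\|f(t_0)\|_Y\leq\frac1T\|f\|_{L^1(0,T;Y)}$ and integrating $\partial_t f$ gives the uniform bound $\|f\|_{L^\infty(0,T;Y)}\leq C'$. From $f(t+\sigma)-f(t)=\int_t^{t+\sigma}\partial_t f(s)\,ds$ one gets $\|f(\cdot+\sigma)-f(\cdot)\|_{L^1(0,T-\sigma;Y)}\leq\sigma\|\partial_t f\|_{L^1(0,T;Y)}\leq C\sigma$, and interpolating this against $\|f(\cdot+\sigma)-f(\cdot)\|_{L^\infty(0,T-\sigma;Y)}\leq2C'$ yields $\|f(\cdot+\sigma)-f(\cdot)\|_{L^p(0,T-\sigma;Y)}\leq C''\sigma^{1/p}$. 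Applying Ehrling's inequality at each time and taking $L^p(0,T-\sigma;\cdot)$ norms,
$$\|f(\cdot+\sigma)-f(\cdot)\|_{L^p(0,T-\sigma;B)}\leq\eta\|f(\cdot+\sigma)-f(\cdot)\|_{L^p(0,T-\sigma;X)}+C_\eta\|f(\cdot+\sigma)-f(\cdot)\|_{L^p(0,T-\sigma;Y)}\leq2C\eta+C_\eta C''\sigma^{1/p};$$
given $\varepsilon>0$, choose $\eta$ so that $2C\eta<\varepsilon/2$, then $\sigma$ so that $C_\eta C''\sigma^{1/p}<\varepsilon/2$, which proves (b).

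\emph{Part (ii).} Here I would use the Arzelà--Ascoli criterion that $F\subset C([0,T];B)$ is relatively compact iff $\{f(t):f\in F,\ t\in[0,T]\}$ is relatively compact in $B$ and $F$ is uniformly equicontinuous in $C([0,T];B)$. Since $\partial_t f\in L^r(0,T;Y)\subset L^1(0,T;Y)$ with $r>1$, again $f$ has a representative in $C([0,T];Y)$, and for a.e.\ $t$ one has $\|f(t)\|_X\leq C$ by the $L^\infty(0,T;X)$ bound, so $\{f(t)\}$ over such $t$ lies in a bounded, hence $B$-precompact, subset of $X$. For equicontinuity, Hölder gives $\|f(t)-f(s)\|_Y\leq|t-s|^{1-1/r}\|\partial_t f\|_{L^r(0,T;Y)}\leq C|t-s|^{1-1/r}$ (this is where $r>1$, hence $1-1/r>0$, is used), and Ehrling's inequality gives $\|f(t)-f(s)\|_B\leq\eta\|f(t)-f(s)\|_X+C_\eta\|f(t)-f(s)\|_Y\leq2C\eta+C_\eta C|t-s|^{1-1/r}$ for a.e.\ $t,s$; choosing $\eta$ and then $|t-s|$ small yields uniform equicontinuity, which simultaneously shows that $f$ has a representative in $C([0,T];B)$ and that $\{f(t):f\in F,\ t\in[0,T]\}$ lies in the $B$-closure of the precompact set above, hence is itself precompact. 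Arzelà--Ascoli then gives relative compactness of $F$ in $C([0,T];B)$.

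\emph{Expected main obstacle.} The delicate step is the translation estimate in Part (i): only an $L^1$-in-time bound on $\partial_t f$ in the weak space $Y$ is available, whereas an $L^p$-in-time bound on translations in the intermediate space $B$ is needed. The resolution is the two-stage bootstrap---first upgrade $f$ to $L^\infty(0,T;Y)$ via the fundamental theorem of calculus, then interpolate the $L^1$- and $L^\infty$-in-time bounds to reach $L^p(0,T;Y)$---before Ehrling's inequality is used to pass from $Y$ to $B$ at the expense of an $\eta$-small multiple of the uniformly bounded $X$-norm. A secondary nuisance is the consistent handling of null sets of times and the passage to continuous $Y$- and $B$-valued representatives so that the pointwise-in-time manipulations are legitimate.
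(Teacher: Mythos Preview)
Your proposal is essentially a correct reconstruction of Simon's compactness theorem via Ehrling's inequality combined with the Fr\'echet--Kolmogorov criterion in $L^p(0,T;B)$ for (i) and Arzel\`a--Ascoli in $C([0,T];B)$ for (ii); the translation-estimate bootstrap and the handling of representatives are both standard and sound.

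However, note that the paper does \emph{not} give a proof of this lemma at all: it is stated with the attribution ``cf.\ Simon \cite{Simon} Corollary 4'' and then simply used as a black box later in Section~\ref{sec4}. So your proposal goes well beyond what the paper does --- you are supplying a self-contained proof of a result the authors merely cite. There is nothing to compare approaches on, since the paper's ``proof'' is a bibliographic reference.
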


\section{Regularities of weak solutions for positive time}
\label{secinterior}
In this section, we prove that the global weak solution to system (\ref{main1})--(\ref{ic}) is smooth away from the initial time.
To prove this fact, we will use
the weak-strong uniqueness result for the primitive equations.
Therefore, Let us recall the definition of strong solutions to the
primitive equations as follows:

\begin{definition}
  Given a positive time $T\in(0,\infty)$ and the initial data $v_0\in H^1_{\text{per}}(\Omega)\cap \mathcal H$. A function $v$ is called a strong solution to system (\ref{main1})--(\ref{ic}), on $\Omega\times(0,T)$, if
  $$
  v\in C([0,T];H^1_\text{per}(\Omega)\cap\mathcal H)\cap L^2(0,T; H^2_{\text{per}}(\Omega)), \quad\partial_tv\in L^2(0,T; L^2(\Omega)),
  $$
  satisfies equation (\ref{main1}) pointwisely, a.e.\,in $\Omega\times(0,T)$, and fulfills the initial condition (\ref{ic}), with $w$ given by equation (\ref{main2}), and $p$ uniquely determined by the following elliptic problem:
  \begin{equation*}
  \left\{
  \begin{array}{l}
  -\Delta_Hp(\textbf{x}^H,t)=\frac{1}{2h}\int_{-h}^h[\nabla_H\cdot\nabla_H\cdot(v\otimes v)+f_0k\times v]dz,\quad\mbox{in }\Omega, \\
  \int_\Omega p(\textbf{x}^H,t)d\textbf{x}^H=0, \quad p\mbox{ is periodic in }\textbf{x}^H.
  \end{array}
  \right.
  \end{equation*}
\end{definition}

\begin{proposition}
\label{propcont}
Let $v$ be a weak solution to system (\ref{main1})--(\ref{ic}), with
initial data $v_0\in\mathcal H$. Then, there is a subset
$E\subseteq(0,\infty)$ of measure zero, such that
$$
\lim_{t\rightarrow0^+, t\not\in E}v(t)=v_0,\quad\mbox{in }L^2(\Omega).
$$
\end{proposition}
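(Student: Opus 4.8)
The plan is to combine the three facts furnished by Definition \ref{def1.1}: the weak-in-$L^2$ continuity of $t\mapsto v(t)$, the energy inequality (iv), and the identification of the initial trace $v(0)=v_0$, which must first be extracted from the weak formulation (ii) since it is not postulated directly.

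First I would show that $v(0)=v_0$ in $L^2(\Omega)$. Take $\varphi(\textbf{x},t)=\theta(t)\psi(\textbf{x})$ in (ii), with $\theta\in C^1([0,\infty))$ compactly supported and $\psi\in C_\text{per}^1(\Omega)\cap\mathcal H$, and set $g(t)=\int_\Omega v(\textbf{x},t)\cdot\psi(\textbf{x})\,d\textbf{x}$ together with $h(t)=\int_\Omega[(v\cdot\nabla_H)v\cdot\psi+w\partial_zv\cdot\psi+\nabla v:\nabla\psi]\,d\textbf{x}$. By (i) the function $g$ is continuous on $[0,\infty)$, and $h\in L^1_\text{loc}([0,\infty))$: indeed $v\in L^\infty_\text{loc}([0,\infty);L^2(\Omega))\cap L^2_\text{loc}([0,\infty);H^1_\text{per}(\Omega))$ (the first from (iv), the second from (i)) makes the diffusion and inertial terms time-integrable, while $\int_\Omega w\partial_zv\cdot\psi\,d\textbf{x}$ is controlled, after an integration by parts in $z$ and using the representation (\ref{main2}) for $w$ together with Lemma \ref{lad}, by a quantity integrable in $t$. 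Testing first with $\theta$ supported in $(0,\infty)$ gives $g'=-h$ in $\mathcal D'((0,\infty))$, so $g$ agrees with an absolutely continuous function; being itself continuous, $g(t)=g(0)-\int_0^th(\tau)\,d\tau$ for all $t\ge0$. Plugging this back into (ii) for a general $\theta$ with $\theta(0)=1$ and integrating by parts in $t$ forces $g(0)=\int_\Omega v_0\cdot\psi\,d\textbf{x}$. Since $C_\text{per}^1(\Omega)\cap\mathcal H$ is dense in $\mathcal H$, this yields $v(0)=v_0$, and hence, by (i), $v(t)\rightharpoonup v_0$ weakly in $L^2(\Omega)$ as $t\to0^+$.

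Next, let $E\subseteq(0,\infty)$ be the set of measure zero off of which the energy inequality (iv) holds. For $t\notin E$ one has $\|v(t)\|_2^2\le\|v_0\|_2^2-2\int_0^t\|\nabla v\|_2^2(\tau)\,d\tau\le\|v_0\|_2^2$, so $\limsup_{t\to0^+,\,t\notin E}\|v(t)\|_2\le\|v_0\|_2$. Combined with the weak lower semicontinuity of the $L^2$ norm applied to $v(t)\rightharpoonup v_0$, this gives $\lim_{t\to0^+,\,t\notin E}\|v(t)\|_2=\|v_0\|_2$. Finally, in the Hilbert space $L^2(\Omega)$ weak convergence together with convergence of norms implies strong convergence, because $\|v(t)-v_0\|_2^2=\|v(t)\|_2^2-2\int_\Omega v(t)\cdot v_0\,d\textbf{x}+\|v_0\|_2^2\to0$ as $t\to0^+$ with $t\notin E$. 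This is exactly the asserted limit.

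The only step that is not completely routine is the identification $v(0)=v_0$, i.e. upgrading the space-time integral identity (ii) to a statement at the single time $t=0$; the remainder is the standard principle that weak convergence plus convergence of norms is strong convergence in a Hilbert space. Within that step, the point requiring a little care is the time-integrability of $h$, specifically of the term $\int_\Omega w\partial_zv\cdot\psi\,d\textbf{x}$, which is where the $L^2_\text{loc}(H^1_\text{per})$ bound on $v$ and the anisotropic estimate of Lemma \ref{lad} are used; I expect this to be the main (though still minor) technical obstacle.
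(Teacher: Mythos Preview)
Your argument is correct and follows the same route as the paper: the energy inequality (iv) bounds $\limsup\|v(t)\|_2$ from above by $\|v_0\|_2$, weak continuity plus lower semicontinuity bounds the $\liminf$ from below, and then weak convergence together with convergence of norms yields strong $L^2$ convergence. The only difference is that you explicitly establish $v(0)=v_0$ from the weak formulation (ii), whereas the paper simply invokes the weak continuity $v\in C([0,\infty);L^2_w(\Omega))$ and uses $v(t)\rightharpoonup v_0$ without justifying the identification of the trace; your extra step fills that small gap.
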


\begin{proof}
  By (iv) of Definition \ref{def1.1}, there is a subset $E\subseteq(0,\infty)$ of measure zero, such that
  $\|v\|_2^2(t)\leq\|v_0\|_2^2,$ for any $t\in(0,\infty)\setminus E$. Thanks to this, and recalling that $v(t)$ is continuous in $L^2_w(\Omega)$ with respect to $t$, it follows from the lower semi-continuity of the norms that
  $$
  \|v_0\|_2^2\leq \varliminf_{t\rightarrow0^+, t\not\in E}\|v\|_2^2(t)\leq \varlimsup_{t\rightarrow0^+, t\not\in E}\|v\|_2^2(t)\leq\|v_0\|_2^2,
  $$
  and thus $\lim_{t\rightarrow0^+,t\not\in E}\|v\|_2^2(t)=\|v_0\|_2^2$. Thanks to this, we deduce
  \begin{align*}
    \lim_{t\rightarrow0^+, t\not\in E}\|v(t)-v_0\|_2^2=& \lim_{t\rightarrow0^+, t\not\in E}\int_\Omega(|v(t)|^2+|v_0|^2-2v(t)\cdot v_0)d\textbf{x}\\
    =&\lim_{t\rightarrow0^+, t\not\in E}\int_\Omega(|v(t)|^2-|v_0|^2)d\textbf{x}=0,
  \end{align*}
  proving the conclusion.
\end{proof}

The integral equality in the following proposition can be viewed as
a replacement of (ii) in Definition \ref{def1.1}, without changing
the definition of weak solutions.

\begin{proposition}\label{appprop1}
  Let $v$ be a weak solution to system (\ref{main1})--(\ref{ic}). Then, for any $0\leq t_1<t_2<\infty$, the following holds
  \begin{align*}
    &\int_\Omega v(\textbf{x},t_2)\varphi(\textbf{x},t_2)d\textbf{x}+\int_{t_1}^{t_2}\int_\Omega \nabla v:\nabla\varphi d\textbf{x}dt\\
    =&\int_\Omega v(\textbf{x},t_1)\varphi(\textbf{x},t_1)d\textbf{x}+\int_{t_1}^{t_2} \int_\Omega[v\cdot\partial_t\varphi -(v\cdot\nabla_H)v\cdot\varphi-w\partial_zv\cdot \varphi]d\textbf{x}dt,
  \end{align*}
  for vector field $\varphi\in C([t_1,t_2]; C^1_\text{per}(\Omega)\cap\mathcal H)\cap C^1([t_1,t_2]; C_\text{per}(\Omega))$.
\end{proposition}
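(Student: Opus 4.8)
The plan is to deduce the interval identity from the global weak formulation of Definition~\ref{def1.1}(ii) by a standard temporal cut-off argument. Fix $0\le t_1<t_2<\infty$ and a test field $\varphi$ as in the statement. For small $\varepsilon>0$ one would choose $\zeta_\varepsilon\in C^1([0,\infty))$ with $0\le\zeta_\varepsilon\le1$, equal to $1$ on $[t_1+\varepsilon,t_2-\varepsilon]$, monotone on each transition interval, supported in $[t_1,t_2]$, and with $\zeta_\varepsilon(t_i)=\zeta_\varepsilon'(t_i)=0$ at those endpoints $t_i>0$ being ``cut'' (when $t_1=0$ one keeps instead $\zeta_\varepsilon\equiv1$ on $[0,t_2-\varepsilon]$, so that $\zeta_\varepsilon(0)=1$). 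Then $\varphi_\varepsilon:=\zeta_\varepsilon\varphi$, extended by $0$ outside $[t_1,t_2]$, is compactly supported in time and, because $\zeta_\varepsilon$ and $\zeta_\varepsilon'$ vanish at each cut endpoint, lies in the admissible class $C([0,\infty);C_\text{per}^1(\Omega)\cap\mathcal H)\cap C^1([0,\infty);C_\text{per}(\Omega))$. Inserting $\varphi_\varepsilon$ as a test field in Definition~\ref{def1.1}(ii) and using $\partial_t\varphi_\varepsilon=\zeta_\varepsilon'\varphi+\zeta_\varepsilon\partial_t\varphi$ and $\nabla\varphi_\varepsilon=\zeta_\varepsilon\nabla\varphi$ gives
$$-\int_{t_1}^{t_2}\zeta_\varepsilon'(t)\Phi(t)\,dt+\int_{t_1}^{t_2}\zeta_\varepsilon(t)G(t)\,dt=\zeta_\varepsilon(0)\int_\Omega v_0\varphi(\textbf{x},0)\,d\textbf{x},$$
where $\Phi(t):=\int_\Omega v(\textbf{x},t)\cdot\varphi(\textbf{x},t)\,d\textbf{x}$ and $G(t):=\int_\Omega[-v\cdot\partial_t\varphi+(v\cdot\nabla_H)v\cdot\varphi+w\partial_zv\cdot\varphi+\nabla v:\nabla\varphi]\,d\textbf{x}$.

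The plan then requires two elementary facts. First, $G\in L^1((t_1,t_2))$: by the energy inequality (iv) of Definition~\ref{def1.1} one has $v\in L^\infty((0,t_2);L^2(\Omega))\cap L^2((0,t_2);H^1_\text{per}(\Omega))$, and from $w=-\nabla_H\cdot\int_{-h}^zv\,d\xi$ one gets $\|w\|_2\le C\|\nabla_Hv\|_2$; since $\varphi$ and $\nabla\varphi$ are bounded on $\Omega\times[t_1,t_2]$, the H\"older inequality (first in $\textbf{x}$, then in $t$) bounds $\int_{t_1}^{t_2}|G|$ by a finite expression in $\|v\|_{L^\infty_tL^2}$, $\|\nabla v\|_{L^2_tL^2}$, $t_2-t_1$, and the sup-norms of $\varphi$ and $\nabla\varphi$. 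Second, $\Phi$ is continuous on $[0,\infty)$: by Definition~\ref{def1.1}(i) one has $v(t)\rightharpoonup v(t_0)$ weakly in $L^2(\Omega)$ as $t\to t_0$, while $\varphi(\cdot,t)\to\varphi(\cdot,t_0)$ strongly in $L^2(\Omega)$, and the duality pairing of a weakly convergent family against a strongly convergent one converges; in particular $\Phi$ is bounded near $t_1$ and $t_2$.

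The final step is to let $\varepsilon\to0^+$. Since $\zeta_\varepsilon\to\chi_{[t_1,t_2]}$ pointwise and boundedly and $G\in L^1((t_1,t_2))$, dominated convergence yields $\int_{t_1}^{t_2}\zeta_\varepsilon G\to\int_{t_1}^{t_2}G$. The function $\zeta_\varepsilon'$ acts as a signed approximate identity, concentrating near $t_1$ with total mass $+1$ (when $t_1>0$) and near $t_2$ with total mass $-1$; by the continuity of $\Phi$ this gives $-\int_{t_1}^{t_2}\zeta_\varepsilon'\Phi\to\Phi(t_2)-\Phi(t_1)$ when $t_1>0$, whereas when $t_1=0$ the left concentration is absent, $\zeta_\varepsilon(0)=1$, and $-\int_0^{t_2}\zeta_\varepsilon'\Phi\to\Phi(t_2)$. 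Since $v(\textbf{x},0)=v_0$ — which follows by combining the weak $L^2$-continuity of $v$ with the strong convergence $v(t)\to v_0$ along times outside the null set of Proposition~\ref{propcont} — in both cases the limit relation reads $\Phi(t_2)-\Phi(t_1)+\int_{t_1}^{t_2}G=0$, which is exactly the asserted identity after rearranging the terms of $G$. (Alternatively, once the case $t_1>0$ is established, the case $t_1=0$ follows by letting $t_1\to0^+$ along times outside that null set, using dominated convergence for the $G$-integral and Proposition~\ref{propcont} for the boundary term.) The only step I expect to require genuine care is the extraction of the time-boundary values $\Phi(t_1)$ and $\Phi(t_2)$ from the $\zeta_\varepsilon'$-term; this rests precisely on the weak $L^2$-continuity of $v$ guaranteed by Definition~\ref{def1.1}(i), and, at $t_1=0$, on Proposition~\ref{propcont}.
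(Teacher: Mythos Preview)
Your proposal is correct and follows essentially the same strategy as the paper's proof: insert a temporal cutoff into the weak formulation of Definition~\ref{def1.1}(ii), use the integrability of the bulk terms (your $G$) for dominated convergence, and extract the boundary values $\Phi(t_1),\Phi(t_2)$ from the $\zeta_\varepsilon'$-term via the continuity of $\Phi(t)=\int_\Omega v\cdot\varphi\,d\textbf{x}$, which holds because $v\in C([0,\infty);L^2_w(\Omega))$.

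The only notable difference is cosmetic: the paper places its cutoff $\chi_h$ \emph{outside} $[t_1,t_2]$ (supported in $(t_1-h,t_2+h)$), which forces it to first extend $\varphi$ by odd reflection in time to a slightly larger interval so that $\tilde\varphi\chi_h$ remains $C^1$; it then reads off the boundary values via the mean value theorem for integrals applied to $\int f\chi_h'$. You instead place the cutoff \emph{inside} $[t_1,t_2]$, which avoids any extension of $\varphi$ at the cost of requiring $\zeta_\varepsilon'$ to vanish at the cut endpoints so that the zero extension is $C^1$. Both devices are standard and lead to the same limit; your version is marginally more economical since no extension of $\varphi$ is needed, while the paper's version keeps the cutoff identically $1$ on all of $[t_1,t_2]$ from the outset. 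Your treatment of the case $t_1=0$ (either directly with $\zeta_\varepsilon(0)=1$, or by sending $t_1\to0^+$ using Proposition~\ref{propcont}) is also fine and matches what the paper indicates.
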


\begin{proof}
We only prove the case that $t_1>0$, the other case that $t_1=0$ can be
proven similarly, by performing the arguments presented below for
time $t_2$ only. Set $h_0=\min\{t_1, t_2-t_1\}$, and define the
extension of $\varphi$ as
\begin{equation*}
  \tilde\varphi(t)=\left\{
  \begin{array}{lr}
    2\varphi(t_2)-\varphi(2t_2-t),&t_2<t\leq t_2+h_0,\\
    \varphi(t),& t_1\leq t\leq t_2,\\
    2\varphi(t_1)-\varphi(2t_1-t),&t_1-h_0\leq t<t_1.
  \end{array}
  \right.
\end{equation*}
One can easily check that $\tilde\varphi\in
C([t_1-h_0,t_2+h_0]; C^1_\text{per}(\Omega)\cap\mathcal H)\cap C^1([t_1-h_0,t_2+h_0]; C_\text{per}(\Omega))$. Let $\chi$ be a function, such that $\chi\in
C^\infty(\mathbb R)$, $\chi\equiv1$ on $(-\infty,0]$, $0\leq\chi\leq1$
and $\chi'\leq0$ on $(0,1/2)$, and $\chi\equiv0$ on $[1/2,\infty)$. For
any $h\in(0,h_0)$, set
$$
\chi_h(t)=\chi\left(\frac{t_1-t}{h}\right)\chi\left(\frac{t-t_2}{h}\right),
\quad t\in\mathbb R.
$$
One can easily verify that $\chi_h\in C_0^\infty((t_1-h,
t_2+h))\subseteq C_0^\infty((0,\infty))$, $\chi_h'\geq0$ on $(t_1-h,
t_1)$, $\chi\equiv1$ on $(t_1,t_2)$, $\chi_h'\leq0$ on $(t_2, t_2+h)$, and
$|\chi_h'|\leq\frac{C}{h}$, for a positive constant $C$ independent of
$h$. Set $\phi=\tilde\varphi\chi_h$. Taking
$\phi$ as the testing function in (ii) of Definition \ref{def1.1}, and
thanks to the properties of $\chi_h$ stated above, one obtains
\begin{align}
\int_{t_1-h_0}^{t_2+h_0}\int_\Omega[-v\cdot\partial_t\tilde
\varphi&+(v\cdot\nabla_H)v\cdot\tilde\varphi+w\partial_zv\cdot\tilde
\varphi +\nabla v:\nabla\tilde\varphi]d\textbf{x}\chi_h(t)dt\nonumber\\
=&\int_{t_1-h}^{t_1}\int_\Omega v\cdot\tilde\varphi
d\textbf{x}\chi_h'(t)dt+ \int_{t_2}^{t_2+h}\int_\Omega v\cdot\tilde\varphi
d\textbf{x}\chi_h'(t)dt, \label{A0}
\end{align}
for any $h\in(0,h_0)$. Denote by LHS the quantity on the left-hand side
of (\ref{A0}). Since $v\in L^\infty(0, t_2+h_0; L^2(\Omega))\cap L^2(0,t_2+h_0; H^1_{\text{per}}(\Omega))$, by the Lebesgue dominate convergence theorem, one can see that
$$
LHS\rightarrow \int_{t_1}^{t_2}\int_\Omega[-v\cdot\partial_t \varphi+(v\cdot\nabla_H)v\cdot\varphi+w\partial_zv\cdot\varphi +\nabla v:\nabla\varphi]d\textbf{x}dt,
$$
as $h\rightarrow0^+$. Define the function $f$ as
$$
f(t)=\int_\Omega v(\textbf{x},t)\tilde\varphi(\textbf{x},t)d\textbf{x}, \quad t\in (t_1-h_0, t_2+h_0).
$$
Recalling that $v\in C([0,\infty); L^2_w(\Omega))$ and noticing that $\tilde\varphi\in C^1(\bar\Omega\times[t_1-h_0, t_2+h_0])$, it is clear that $f\in C([t_1-h_0, t_2+h_0])$. Denote by RHS the quantity on the right-hand side of (\ref{A0}). Recalling that $\chi_h'\geq0$ on $(t_1-h,
t_1)$, and $\chi_h'\leq0$ on $(t_2, t_2+h)$, and applying the mean value theorem of integrals, one deduces
\begin{align*}
RHS=&\int_{t_1-h}^{t_1}f(t)\chi_h'(t)dt+\int_{t_2}^{t_2+h}f(t)\chi_h'(t)dt\\
=&f(t_1-\theta_1(h)h)\int_{t_1-h}^{t_1}\chi_h'(t)dt+f(t_2+\theta_2(h)h)\int_{t_2} ^{t_2+h}\chi_h'(t)dt\\
=&f(t_1-\theta_1(h)h)-f(t_2+\theta_2(h)h)\rightarrow f(t_1)-f(t_2),
\end{align*}
as $h\rightarrow0^+$, where $\theta_1(h),\theta_2(h)\in[0,1]$. Thanks to the above statements, taking $h\rightarrow0^+$ in (\ref{A0}) yields the conclusion.
\end{proof}

Withe the aid of Proposition \ref{appprop1}, one can show that any weak solution to the primitive equations on $\Omega\times(0,\infty)$ is also a weak solution to the primitive equations on $\Omega\times(t,\infty)$, with initial data $v(t)$, for a.e.~$t\in[0,\infty)$, that is the following proposition.

\begin{proposition}
  \label{apppropweak}
  Let $v$ be a global weak solution to system (\ref{main1})--(\ref{ic}). Then, for a.e.~$t\in[0,\infty)$, by viewing time $t$ as the initial time, $v$ is also a weak solution to system (\ref{main1})--(\ref{ic}), on $\Omega\times(t,\infty)$, with initial data $v(t)$.
\end{proposition}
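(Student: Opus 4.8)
The plan is to verify the four defining conditions (i)--(iv) of Definition \ref{def1.1} for $v$, regarded as a function on $\Omega\times(t,\infty)$ with initial data $v(t)$, for every $t$ outside a suitable exceptional null set $N\subseteq[0,\infty)$. The exceptional set $N$ will be the union of: the measure-zero set appearing in the energy inequality (iv) of Definition \ref{def1.1}; the set where the differential inequality (iii) fails to hold as a pointwise consequence; and, crucially, the set of points $t$ that are not Lebesgue points of the maps $s\mapsto\|v(s)\|_2^2$ and $s\mapsto v(s)$ relative to the structure below. For each such ``good'' $t$, I must check that $v(t)\in\mathcal H$; this follows because $v\in L^2_{\mathrm{loc}}([0,\infty);H^1_{\mathrm{per}}(\Omega)\cap\mathcal H)$, so for a.e.\ $t$ one has $v(t)\in H^1_{\mathrm{per}}(\Omega)\cap\mathcal H\subseteq\mathcal H$.

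The main work is condition (ii), the weak formulation of the equation, and this is exactly where Proposition \ref{appprop1} does the job. Indeed, given a compactly-supported-in-time test field $\varphi\in C([t,\infty);C^1_{\mathrm{per}}(\Omega)\cap\mathcal H)\cap C^1([t,\infty);C_{\mathrm{per}}(\Omega))$, apply Proposition \ref{appprop1} with $t_1=t$ and $t_2=T$ chosen larger than the support of $\varphi$ in time; the boundary term at $t_2$ vanishes by compact support, and one is left precisely with the identity
$$
\int_t^\infty\int_\Omega[-v\cdot\partial_t\varphi+(v\cdot\nabla_H)v\cdot\varphi+w\partial_zv\cdot\varphi+\nabla v:\nabla\varphi]\,d\mathbf{x}\,ds=\int_\Omega v(\mathbf{x},t)\varphi(\mathbf{x},t)\,d\mathbf{x},
$$
which is (ii) of Definition \ref{def1.1} on $\Omega\times(t,\infty)$ with initial datum $v(t)$. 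Note this holds for \emph{every} $t\geq0$, not just a.e.\ $t$, since Proposition \ref{appprop1} is stated for all $0\le t_1<t_2$. Condition (i), the continuity $v\in C([t,\infty);L^2_w(\Omega))$ and the integrability $v\in L^2_{\mathrm{loc}}([t,\infty);H^1_{\mathrm{per}}(\Omega)\cap\mathcal H)$, is immediate by restriction from the corresponding properties on $[0,\infty)$.

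For conditions (iii) and (iv): the differential inequality $\frac12\frac{d}{dt}\|v\|_2^2+\|\nabla v\|_2^2\le0$ holds in $\mathcal D'((0,\infty))$, hence a fortiori in $\mathcal D'((t,\infty))$ for every $t\ge0$, giving (iii) without any exceptional set. For the energy inequality (iv), I would integrate (iii): since $s\mapsto\frac12\|v(s)\|_2^2+\int_0^s\|\nabla v\|_2^2\,d\tau$ is non-increasing (as a consequence of the distributional differential inequality together with the fact that $\|v(s)\|_2^2$ has a representative of bounded variation), for a.e.\ $t$ one has $\frac12\|v(t)\|_2^2+\int_t^s\|\nabla v\|_2^2\,d\tau\le\frac12\|v(t)\|_2^2$ for a.e.\ $s>t$; more carefully, one selects $t$ to be a point where $\|v(t)\|_2^2$ equals the ``essential limsup from the left'' so that the energy drop is respected. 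The one genuinely delicate point is this matching of the value $\|v(t)\|_2^2$ with the monotone envelope, i.e.\ ensuring that the initial datum $v(t)$ we hand to the new solution is compatible with the energy inequality on $(t,\infty)$ — this requires choosing $t$ outside the (countable, hence null) set of downward jumps of the non-increasing energy function and outside the null set where the lower-semicontinuity argument of Proposition \ref{propcont} would fail; I expect this bookkeeping to be the main obstacle, though it is standard once set up carefully.
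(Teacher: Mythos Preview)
Your proposal is correct and follows the paper's approach: conditions (i) and (iii) are immediate by restriction, (ii) follows directly from Proposition~\ref{appprop1}, and the only real work is deriving the energy inequality (iv) on $(t_0,\infty)$ from the distributional differential inequality (iii). The paper carries this out by mollifying (iii) in time, integrating the mollified inequality over $(t_0,t)$, and letting the mollification parameter tend to zero using a.e.\ convergence of the mollifications---a concrete implementation of your monotone-representative/Lebesgue-point sketch; note also the slip in your displayed inequality, where the left-hand side should read $\tfrac12\|v(s)\|_2^2$ rather than $\tfrac12\|v(t)\|_2^2$.
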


\begin{proof}
  We need to verify the terms (i)--(iv) in Definition \ref{def1.1} on
  the time interval $[t_0,\infty)$, for a.e.~$t_0\in[0,\infty)$. It is
  clear that (i) and (iii) hold, while the validity of (ii) is
  guaranteed by Proposition \ref{appprop1}. We still need to verify that
  (iv) holds on time interval $[t_0,\infty)$, for
  a.e.~$t_0\in[0,\infty)$, or equivalently that
  \begin{equation}
    \frac12\|v\|_2^2(t)+\int_{t_0}^t\|\nabla v\|_2^2(\tau)d\tau\leq\frac12\|\nabla v\|_2^2(t_0),\label{B2}
  \end{equation}
  for a.e.~$t_0\in[0,\infty)$ and a.e.~$t\in(t_0,\infty)$. Setting
  $f(t)=\|v\|_2^2(t)$ and $g(t)=\|\nabla v\|_2^2(t)$, for
  $t\in[0,\infty)$. Then by the regularities of weak solutions, one has
  $f,g\in L^1_{\text{loc}}([0,\infty))$. By (iii) of Definition
  \ref{def1.1}, it holds that
  $$
  \frac12f'(t)+g(t)\leq 0,\quad\mbox{in }\mathcal D'((0,\infty)).
  $$
  Let $0\leq j_\varepsilon=\frac1\varepsilon
  j\left(\frac{t}{\varepsilon}\right)$ be a standard modifier,
  with $j\in C_0^\infty((-1,1))$, and set
  $f_\varepsilon=f*j_\varepsilon$ and $g_\varepsilon=g*j_\varepsilon$,
  for $\varepsilon>0$. For any $t>0$, it is clear that $0\leq
  j_\varepsilon(\,\cdot-t)\in C_0^\infty((0,\infty))$. Thus, one can test the above inequality by $j_\varepsilon(\,\cdot-t)$ to get
  $$
  \frac12f_\varepsilon'(t)+g_\varepsilon(t)\leq0,\quad t\in(0,\infty), \varepsilon\in(0,t).
  $$
  For any $t_0\in(0,\infty)$ and $t\in(t_0,\infty)$, integration the above inequality over the interval $(t_0, t)$ yields
  $$
  \frac12f_\varepsilon(t)+\int_{t_0}^tg_\varepsilon(s)ds\leq\frac12f_\varepsilon(t_0), $$
  for any $\varepsilon\in(0,t_0)$. Note that
  $f_\varepsilon(t)\rightarrow f(t)$ and $g_\varepsilon(t)\rightarrow g(t)$, a.e.\,$t\in(0,\infty)$, and
  $g_\varepsilon\rightarrow g$ in $L^1((0,T))$, for any finite time
  $T$. Thanks to these, taking $\varepsilon\rightarrow0^+$ in the above
  inequality yields
  $$
  \frac12f(t)+\int_{t_0}^tg (s)ds\leq\frac12f(t_0),
  $$
  for a.e.~$t_0\in(0,\infty)$ and a.e.~$t\in(t_0,\infty)$, which is exactly (\ref{B2}). This completes the proof of Proposition \ref{apppropweak}.
\end{proof}

The following proposition states the weak-strong uniqueness result for the primitive equations.

\begin{proposition}
  \label{apppropwsu}
  Given the initial data $v_0\in H^1_{\text{per}}(\Omega)\cap
  \mathcal H$. Let $v_{\text{s}}$ and $v_{\text{w}}$ be the unique
  global strong solution and an arbitrary global weak solution,
  respectively, to system (\ref{main1})--(\ref{ic}), with the same
  initial data $v_0$. Then, we have $v_{\text{s}}\equiv v_{\text{w}}$.
\end{proposition}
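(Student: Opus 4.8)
The plan is to run a Serrin-type weak-strong uniqueness argument. Let $u = v_{\text{w}} - v_{\text{s}}$, which satisfies the difference equation obtained by subtracting the two copies of \eqref{main1}. Since $v_{\text{s}}$ is a strong solution with the regularity $v_{\text{s}}\in C([0,T];H^1_\text{per})\cap L^2(0,T;H^2_\text{per})$ and $\partial_t v_{\text{s}}\in L^2(0,T;L^2)$, it is an admissible (after mollification in time) test function, so one can legitimately pair the weak formulation of $v_{\text{w}}$ (in the form of Proposition \ref{appprop1}) against $v_{\text{s}}$, pair the strong equation for $v_{\text{s}}$ against $v_{\text{w}}$, and combine these with the energy inequality (iv) for $v_{\text{w}}$ and the energy \emph{equality} for $v_{\text{s}}$. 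The upshot, following Serrin's scheme, is a differential inequality of the form
$$
\frac12\frac{d}{dt}\|u\|_2^2 + \|\nabla u\|_2^2 \le -\int_\Omega \big[(u\cdot\nabla_H)v_{\text{s}}\big]\cdot u\,d\textbf{x} - \int_\Omega w_u\,\partial_z v_{\text{s}}\cdot u\,d\textbf{x},
$$
where $w_u = -\nabla_H\cdot\int_{-h}^z u\,d\xi$ is the vertical velocity associated with $u$; all the ``good'' (symmetric) quadratic terms in $u$ cancel because $\int_\Omega (v_{\text{w}}\cdot\nabla_H)u\cdot u + \int_\Omega w_{v_{\text{w}}}\partial_z u\cdot u = 0$ by incompressibility and the boundary conditions, and similarly for the pressure and Coriolis terms.

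Next I would estimate the two remaining terms. These are exactly the terms where the primitive-equations ``derivative loss'' in $w$ would normally be fatal, but here $v_{\text{s}}$ carries the extra regularity. Using the anisotropic estimates collected in Lemma \ref{lad} (with $\phi$ built from the vertically-integrated $|u|$ or $|\nabla_H u|$, and $\varphi,\psi$ built from $u$ and $\partial_z v_{\text{s}}$, $\nabla_H v_{\text{s}}$), together with the Poincaré-type bound $\|w_u\|$-type control in terms of $\|\nabla_H u\|_2$, one bounds the right-hand side by
$$
\tfrac12\|\nabla u\|_2^2 + C\big(\|\nabla v_{\text{s}}\|_2^2 + \|v_{\text{s}}\|_{H^2}^2\big)\,\|u\|_2^2
$$
(the precise Sobolev bookkeeping is routine and mirrors the a priori estimates used for strong solutions). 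Absorbing $\tfrac12\|\nabla u\|_2^2$ into the left side gives
$$
\frac{d}{dt}\|u\|_2^2 \le \Phi(t)\,\|u\|_2^2,\qquad \Phi\in L^1_\text{loc}([0,T]),
$$
since $\|v_{\text{s}}\|_{H^2}^2\in L^1_\text{loc}$. As $\|u\|_2^2(0)=0$, Grönwall's inequality forces $u\equiv 0$ on $[0,T]$ for every $T$, hence $v_{\text{w}}\equiv v_{\text{s}}$.

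The main obstacle is the rigorous justification of the pairing step: $v_{\text{w}}$ is only a weak solution (merely $C([0,\infty);L^2_w)\cap L^2_\text{loc}(H^1_\text{per})$, with $\partial_t v_{\text{w}}$ in a negative-order space), so one cannot simply ``multiply by $v_{\text{s}}$ and integrate by parts.'' This is handled by testing the weak formulation against a time-mollified $v_{\text{s}}$ (as was done for a generic $\varphi$ in Proposition \ref{appprop1}), testing the strong equation against a time-mollified $v_{\text{w}}$, and then carefully passing to the limit in the mollification parameter; the continuity of $t\mapsto \int_\Omega v_{\text{w}}(t)\cdot v_{\text{s}}(t)\,d\textbf{x}$ (from $v_{\text{w}}\in C([0,\infty);L^2_w)$ and $v_{\text{s}}\in C([0,T];H^1_\text{per})$) makes the boundary terms at $t$ and $0$ converge correctly, and the energy inequality for $v_{\text{w}}$ versus the energy identity for $v_{\text{s}}$ supplies the one-sided inequality in the right direction. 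The second delicate point, minor by comparison, is that the nonlinear terms in the weak formulation of $v_{\text{w}}$ involve $w_{v_{\text{w}}}\partial_z v_{\text{w}}$, which is only integrable thanks to Lemma \ref{lad}; one must check that these remain well-defined after inserting the mollified $v_{\text{s}}$, which again follows from the anisotropic bounds together with $v_{\text{w}}\in L^2_\text{loc}(H^1_\text{per})$.
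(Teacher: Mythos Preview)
Your proposal is correct and follows essentially the same Serrin-type approach as the paper: pair the weak formulation of $v_{\text{w}}$ against $v_{\text{s}}$ (via Proposition~\ref{appprop1}), combine with the energy identity for $v_{\text{s}}$ and the energy inequality for $v_{\text{w}}$, reduce the cross terms to $-\int (U\cdot\nabla)v_{\text{s}}\cdot u$, estimate these with Lemma~\ref{lad}, and close by Gr\"onwall. The only minor presentational differences are that the paper exploits the smoothness of $v_{\text{s}}$ for $t>0$ to test directly on intervals $(s,t)$ with $s>0$ (rather than mollifying in time), and handles the limit $s\to 0^+$ explicitly through Proposition~\ref{propcont} to kill the residual term $\tfrac12(\|v_0\|_2^2-\|v_{\text{w}}\|_2^2(s))$; your remark that ``the boundary terms at $t$ and $0$ converge correctly'' is exactly this step, but note it genuinely needs the energy inequality (not just $L^2_w$-continuity) to upgrade $v_{\text{w}}(s)\to v_0$ to strong $L^2$ convergence along a suitable sequence.
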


\begin{proof}
  Denote $U_{\text{s}}=(v_{\text{s}}, w_{\text{s}})$ and $U_{\text{w}}=(v_{\text{w}}, w_{\text{w}})$, and set $U=(v,w)=U_{\text{w}}-U_{\text{s}}$, where $w_{\text{s}}$ and $w_{\text{w}}$ are determined uniquely by $v_{\text{s}}$ and $v_{\text{w}}$, respectively, through the relation (\ref{main2}).

  Since strong solutions to the primitive equations are smooth away
  from the initial time, one can choose $\varphi=v_{\text{s}}$ as
  the testing function in Proposition \ref{appprop1}, and thus get
  \begin{align*}
    &\int_\Omega v_{\text{w}}(\textbf{x},t)\cdot v_{\text{s}}(\textbf{x},t)d\textbf{x}+\int_s^t\int_\Omega\nabla v_{\text{w}}:\nabla v_{\text{s}}d\textbf{x}d\tau\\
=&\int_\Omega v_{\text{w}}(\textbf{x},s)\cdot v_{\text{s}}(\textbf{x},s)d\textbf{x}+\int_s^t\int_\Omega[v_{\text{w}}\cdot\partial_tv_{\text{s}}-(U_{\text{w}}\cdot \nabla)v_{\text{w}}\cdot v_{\text{s}}]d\textbf{x}d\tau,
  \end{align*}
  for any $0<s<t<\infty$. Adding both sides of the above equality by $\int_s^t\int_\Omega\nabla v_{\text{w}}:\nabla v_{\text{s}}d\textbf{x}d\tau$ yields
  \begin{align*}
    &\int_\Omega v_{\text{w}}(\textbf{x},t)\cdot v_{\text{s}}(\textbf{x},t)d\textbf{x}+2\int_s^t\int_\Omega\nabla v_{\text{w}}:\nabla v_{\text{s}}d\textbf{x}d\tau\nonumber\\
   =&\int_\Omega v_{\text{w}}(\textbf{x},s)\cdot v_{\text{s}}(\textbf{x},s)d\textbf{x}-\int_s^t\int_\Omega(U_{\text{w}}\cdot \nabla)v_{\text{w}}\cdot v_{\text{s}}d\textbf{x}d\tau\nonumber\\
   &+\int_s^t\int_\Omega(v_{\text{w}}\cdot\partial_tv_{\text{s}}+\nabla v_{\text{w}}:\nabla v_{\text{s}})d\textbf{x}d\tau,
  \end{align*}
  for any $0<s<t<\infty$. For the last term on the right-hand side of the above equality, one can integrate by parts and use the equations for $v_{\text{s}}$ to get
  $$
  \int_s^t\int_\Omega(v_{\text{w}}\cdot\partial_tv_{\text{s}}+\nabla v_{\text{w}}:\nabla v_{\text{s}})d\textbf{x}d\tau=-\int_s^t\int_\Omega (U_\text{s}\cdot\nabla)v_{\text{s}}\cdot v_{\text{w}}d\textbf{x}d\tau.
  $$
  Plugging this equality into the previous one leads to
  \begin{align}
    &\int_\Omega v_{\text{w}}(\textbf{x},t)\cdot v_{\text{s}}(\textbf{x},t)d\textbf{x}+2\int_s^t\int_\Omega\nabla v_{\text{w}}:\nabla v_{\text{s}}d\textbf{x}d\tau\nonumber\\
   =&\int_\Omega v_{\text{w}}(\textbf{x},s)\cdot v_{\text{s}}(\textbf{x},s)d\textbf{x}-\int_s^t\int_\Omega[(U_{\text{w}}\cdot \nabla)v_{\text{w}}\cdot v_{\text{s}}+(U_\text{s}\cdot\nabla)v_{\text{s}}\cdot v_{\text{w}}]d\textbf{x}d\tau,\label{A1}
  \end{align}
  for any $0<s<t<\infty$.

  Multiplying the equation for $v_{\text{s}}$ by $v_{\text{s}}$, and
  integrating over $\Omega$, the it follows from integration by parts,
  and integrating over the time interval $(s,t)$ that
  \begin{equation*}
    \frac12\|v_\text{s}\|_2^2(t)+\int_s^t\|\nabla v_{\text{s}}\|_2^2(\tau)d\tau=\frac12\|v_\text{s}\|_2^2(s),
  \end{equation*}
  for any $0<s<t<\infty$. Recalling the energy inequality in Definition \ref{def1.1}, we have
  \begin{align*}
  \frac12\|v_\text{w}\|_2^2(t)+\int_s^t\|\nabla v_{\text{w}}\|_2^2(\tau)d\tau
  \leq \frac12\|v_\text{w}\|_2^2(s)+\frac12(\|v_0\|_2^2- \|v_\text{w}\|_2^2(s)),
  \end{align*}
  for a.e. $0<s<t<\infty$. Adding the above two equalities up, and subtracting the resultant by (\ref{A1}) yield
  \begin{align}
   \frac12\|v\|_2^2(t)+\int_s^t\|\nabla v\|_2^2(\tau)d\tau
  \leq&\int_s^t\int_\Omega[(U_\text{w}\cdot\nabla)v_{\text w}\cdot v_{\text s}+(U_{\text s}\cdot\nabla)v_{\text s}\cdot v_{\text w}] d\textbf{x}d\tau\nonumber\\
  &+\frac12(\|v\|_2^2(s)+\|v_0\|_2^2-\|v_{\text w}\|_2^2(s)),\label{A2}
  \end{align}
  for a.e.\,$0<s<t<\infty$.

  We are going to deal with the first term
  on the right-hand side of (\ref{A2}). Recalling the regularities of $v_\text{s}$ and $v_\text{w}$, it follows
  from integration by parts that
  \begin{align}
    I:=&\int_s^t\int_\Omega[ (U_\text{w} \cdot\nabla) v_{\text w} \cdot v_{\text s}+(U_{\text s}\cdot\nabla) v_{\text s}\cdot v_{\text w}] d\textbf{x}d\tau\nonumber\\
    =&\int_s^t\int_\Omega[(U_\text{w} \cdot\nabla) v_{\text w} \cdot v_{\text s}-(U_{\text s}\cdot\nabla) v_{\text w}\cdot v_{\text s}] d\textbf{x}d\tau\nonumber\\
    =&\int_s^t\int_\Omega(U\cdot\nabla) v_{\text w} \cdot v_{\text s} d\textbf{x}d\tau
    =\int_s^t\int_\Omega(U\cdot\nabla)v\cdot v_{\text s} d\textbf{x}d\tau\nonumber\\
    =&-\int_s^t\int_\Omega (U\cdot\nabla)v_\text{s}\cdot vd\textbf{x}d\tau,\label{A3}
  \end{align}
  for any $0<s<t<\infty$. Note that
  $$
  |\nabla_Hv_\text{s}(\textbf{x}^H,z)|\leq\frac{1}{2h} \int_{-h}^h|\nabla_Hv_{\text s}(\textbf{x}^H,z')|dz'+\int_{-h}^h|\nabla_H\partial_zv_\text{s} (\textbf{x}^H, z')|dz',
  $$
  for any $(\textbf{x}^H,z)\in\Omega$.
  Therefore, by Lemma \ref{lad}, and using the Young inequality, we can estimate $I$ as follows
  \begin{align}
    |I|=&\left|\int_s^t\int_\Omega[(v\cdot\nabla_H)v_\text{s} +w\partial_zv_{\text{s}}]\cdot vd\textbf{x}d\tau\right|\nonumber\\
    \leq&\int_s^t\int_M\int_{-h}^h\left(\frac{|\nabla_Hv_{\text s}|}{2h}+|\nabla_H\partial_z v_{\text s}|\right)dz\int_{-h}^h|v|^2dzd\textbf{x}^Hd\tau\nonumber\\
    &+\int_s^t\int_M\int_{-h}^h|\nabla_Hv|dz\int_{-h}^h|\partial_zv_\text{s}||v|dz d\textbf{x}^Hd\tau\nonumber\\
    \leq&C\int_s^t(\|\nabla_Hv_{\text{s}}\|_2+\|\nabla_H\partial_zv_{\text{s}}\|_2) \|v\|_2(\|v\|_2+\|\nabla_Hv\|_2)d\tau\nonumber\\
    &+C\int_s^t\|\nabla_Hv\|_2\|\partial_zv_{\text s}\|_2^{\frac12}(\|\partial_zv_{\text s}\|_2+\|\nabla_H\partial_zv_{\text s}\|_2)^{\frac12}\nonumber\\
    &\qquad\times\|v\|_2^{\frac12}(\|v\|_2+\|\nabla_Hv\|_2)^{\frac12}d\tau \nonumber\\
    \leq&\frac 12\int_s^t \|\nabla_Hv\|_2^2 d\tau+C\int_s^t(1+\|\partial_zv_{\text s}\|_2^2)(1+\|\nabla v_{\text s}\|_2^2\nonumber\\
    &\qquad+\|\nabla_H\partial_zv_{\text s}\|_2^2)\|v\|_2^2d\tau,\label{A4}
  \end{align}
  for any $0<s<t<\infty$.

  Substituting the above estimate into (\ref{A2}) yields
  \begin{align*}
    \|v\|_2^2(t)+\int_s^t\|\nabla v\|_2^2(\tau)d\tau\leq& C\int_s^t(1+\|\partial_zv_{\text s}\|_2^2)(1+\|\nabla v_{\text s}\|_2^2+\|\nabla_H\partial_zv_{\text s}\|_2^2)\|v\|_2^2d\tau\\
    &+\frac12(\|v\|_2^2(s)+\|v_0\|_2^2-\|v_{\text w}\|_2^2(s)),
  \end{align*}
  for a.e.~$0<s<t<\infty$. By Proposition \ref{propcont}, there is a
  sequence $\{s_n\}$, with $s_n\rightarrow0^+$, as $n\rightarrow\infty$,
  such that both $v_{\text s}(s_n)$ and $v_{\text w}(s_n)$ converge
  to $v_0$ in $L^2(\Omega)$, as $n\rightarrow\infty$.
  Choosing $s=s_n$ in the above inequality, and taking
  $n\rightarrow\infty$ yields
  \begin{align*}
    \|v\|_2^2(t)+\int_s^t\|\nabla v\|_2^2(\tau)d\tau\leq&
    C\int_s^tm(\tau)\|v\|_2^2(\tau)d\tau,
  \end{align*}
  for a.e.~$t\in(0,\infty)$, where
  $$
  m(\tau)=(1+\|\partial_zv_{\text s}\|_2^2(\tau))(1+\|\nabla v_{\text s}\|_2^2(\tau)+\|\nabla_H\partial_zv_{\text s}\|_2^2(\tau)),\quad\tau\in(0,\infty).
  $$
  Noticing that $m\in L^1_{\text{loc}}([0,\infty))$, by the Gronwall inequality, the above inequality implies $\|v\|_2^2(t)=0$, a.e.~$t\in(0,\infty)$, that is $v_{\text s}=v_{\text{w}}$. This completes the proof.
\end{proof}

\begin{corollary}[Regularities for positive time]
\label{corinterior}
Let $v$ be a weak solution to system (\ref{main1})--(\ref{ic}). Then, the following two hold:

(i) $v$ is smooth away from the initial time, i.e.~$v\in C^\infty(\bar\Omega\times(0,\infty))$;

(ii) $v$ satisfies the energy identity
$$
\frac12\|v\|_2^2(t)+\int_0^t\|\nabla v\|_2^2(\tau)d\tau=\frac12\|v_0\|_2^2,
$$
for any $t\in(0,\infty)$.
\end{corollary}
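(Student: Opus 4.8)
The plan is to promote $v$, on each time interval bounded away from $0$, to the unique global strong solution with $H^1_{\mathrm{per}}\cap\mathcal H$ initial datum, to invoke the known interior-in-time smoothing of strong solutions, and then to let the artificial initial time tend to $0$; the energy identity then follows from the classical energy balance for smooth solutions together with a limit at $t=0$.

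\textbf{Step 1 (selecting good initial times).} Since $v\in L^2_{\mathrm{loc}}([0,\infty);H^1_{\mathrm{per}}(\Omega)\cap\mathcal H)$, the set of $t_0\in(0,\infty)$ with $v(t_0)\in H^1_{\mathrm{per}}(\Omega)\cap\mathcal H$ has full measure. Intersecting it with the full-measure set supplied by Proposition~\ref{apppropweak}, I obtain $G\subseteq(0,\infty)$ of full measure such that, for every $t_0\in G$, the function $v$ is a weak solution of (\ref{main1})--(\ref{ic}) on $\Omega\times(t_0,\infty)$ with initial datum $v(t_0)\in H^1_{\mathrm{per}}(\Omega)\cap\mathcal H$. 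In particular $0$ is a right accumulation point of $G$, so I may fix a sequence $\{t_n\}\subset G$ with $t_n\downarrow0$.

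\textbf{Step 2 (identification with a strong solution and smoothing).} For fixed $n$, the global well-posedness of strong solutions of the primitive equations with $H^1_{\mathrm{per}}(\Omega)\cap\mathcal H$ data (Cao--Titi \cite{CAOTITI2}, Kobelkov \cite{KOB06}, Kukavica--Ziane \cite{KZ07A,KZ07B}) yields a unique global strong solution $v_{\mathrm s}$ on $\Omega\times(t_n,\infty)$ with datum $v(t_n)$; by Proposition~\ref{apppropwsu}, applied with $t_n$ in place of the initial time, $v\equiv v_{\mathrm s}$ on $\Omega\times(t_n,\infty)$. Next I use that strong solutions are smooth away from their initial time---the parabolic bootstrap already invoked in the proof of Proposition~\ref{apppropwsu}: starting from $v_{\mathrm s}\in C([t_n,\infty);H^1_{\mathrm{per}})\cap L^2_{\mathrm{loc}}(H^2_{\mathrm{per}})$ with $\partial_t v_{\mathrm s}\in L^2_{\mathrm{loc}}(L^2)$, one differentiates the equation in $t$ and in $\textbf{x}^H$, recovers $p$ from its elliptic problem, and iterates the energy and maximal-regularity estimates on time intervals compactly contained in $(t_n,\infty)$ to conclude $v_{\mathrm s}\in C^\infty(\bar\Omega\times(t_n,\infty))$. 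Hence $v\in C^\infty(\bar\Omega\times(t_n,\infty))$ for every $n$, and since $\bigcup_n(t_n,\infty)=(0,\infty)$, part~(i) follows.

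\textbf{Step 3 (the energy identity).} By part~(i), $v$ solves (\ref{main1})--(\ref{main2}) classically on $\Omega\times(0,\infty)$, so I test (\ref{main1}) with $v$ and integrate over $\Omega$: the Coriolis term drops since $(f_0k\times v)\cdot v=0$; the pressure term drops because $p$ is independent of $z$ and $\int_{-h}^h v\,dz$ is horizontally divergence-free by (\ref{main1'}); the convective terms $(v\cdot\nabla_H)v\cdot v+w\partial_z v\cdot v$ integrate to zero after integration by parts, using (\ref{main2}), periodicity in $\textbf{x}^H$, and $w|_{z=\pm h}=0$; and $-\int_\Omega\Delta v\cdot v\,d\textbf{x}=\|\nabla v\|_2^2$. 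This gives $\tfrac12\tfrac{d}{dt}\|v\|_2^2(t)+\|\nabla v\|_2^2(t)=0$ for every $t>0$, hence $\tfrac12\|v\|_2^2(t)+\int_s^t\|\nabla v\|_2^2\,d\tau=\tfrac12\|v\|_2^2(s)$ for all $0<s<t$. Finally, by Proposition~\ref{propcont} I pick $s_k\downarrow0$ with $v(s_k)\to v_0$ in $L^2(\Omega)$; since $\int_0^t\|\nabla v\|_2^2\,d\tau<\infty$ by the energy inequality (iv) of Definition~\ref{def1.1}, letting $k\to\infty$ (monotone convergence in the time integral) yields $\tfrac12\|v\|_2^2(t)+\int_0^t\|\nabla v\|_2^2\,d\tau=\tfrac12\|v_0\|_2^2$ for every $t>0$, which is part~(ii).

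\textbf{Main obstacle.} The only non-routine ingredient is the interior-in-time smoothing of strong solutions used in Step~2 (equivalently, the parabolic bootstrap for the primitive equations, together with the cited global existence of strong solutions). Everything else is soft: the measure-theoretic selection of good initial times, and the weak--strong uniqueness already established in Proposition~\ref{apppropwsu}.
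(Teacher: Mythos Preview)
Your proof is correct and follows essentially the same route as the paper's: select a.e.\ times $t_0>0$ at which $v(t_0)\in H^1_{\mathrm{per}}\cap\mathcal H$ and $v$ is again a weak solution (Proposition~\ref{apppropweak}), identify $v$ with the global strong solution via weak--strong uniqueness (Proposition~\ref{apppropwsu}), invoke interior-in-time smoothing of strong solutions, and then pass $s\downarrow 0$ in the energy balance using Proposition~\ref{propcont}. The only cosmetic difference is that you organize Step~2 via a sequence $t_n\downarrow 0$ and a union $\bigcup_n(t_n,\infty)=(0,\infty)$, whereas the paper fixes an arbitrary $t_0>0$ and picks a single good $t_0'\in(0,t_0)$; the content is identical.
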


\begin{proof}
(i) By Proposition \ref{apppropweak}, for a.e.~$t\in[0,\infty)$, $v$
is still a weak solution to (\ref{main1})--(\ref{ic}) on
$\Omega\times(t,\infty)$, with initial data $v(t)$, by viewing $t$ as
the initial time. By the energy inequality, it is clear that
$v(\cdot,t)\in H^1_\text{per}\cap\mathcal H$, for a.e.~$t\in(0,\infty)$.
Therefore, there is a subset $E\subseteq(0,\infty)$, of measure zero,
such that $v$ is a weak solution to (\ref{main1})--(\ref{ic}) on
$\Omega\times(t,\infty)$, with initial data $v(t)\in
H^1_\text{per}\cap\mathcal H$, for all $t\in(0,\infty)\setminus E$.
Take an
arbitrary time $t_0\in(0,\infty)$. Thanks to what we just stated, there
is a time $t_0'\in(0,t_0)$, such that $v(\cdot, t_0')\in
H^1_\text{per}(\Omega)\cap\mathcal H$, and $v$ is a weak solution to system
(\ref{main1})--(\ref{ic}), on $\Omega\times(t_0',\infty)$, with initial
data $v(t_0')$. Since $v(t_0')\in H^1_\text{per}(\Omega)\cap\mathcal H$, there is
a unique global strong solution $v_{\text{s}}$ to system
(\ref{main1})--(\ref{ic}), on $\Omega\times(t_0',\infty)$, with initial
data $v(t_0')$. By the weak-strong uniqueness, i.e.\,Proposition \ref{apppropwsu}, we then have $v\equiv
v_\text{s}$, on the time interval $[t_0',\infty)\supseteq[t_0,\infty)$.
Recalling that strong solutions to the primitive equations are smooth
away from the initial time, one has $v=v_{\text s}\in
C^\infty(\bar\Omega\times(t_0,\infty))$, and further
$v\in C^\infty(\bar\Omega\times(0,\infty))$, and thus proves (i).

(ii) Thanks to (i), one can multiply equation (\ref{main1}) by $v$,
and integrating the resultant over $\Omega$, then integration
by parts yields
$$
\frac12\frac{d}{dt}\|v\|_2^2(t)+\|\nabla v\|_2^2(t)=0,
$$
for all $t\in(0,\infty)$. Integrating the above equality over the time interval $(s,t)$ leads to
$$
\frac12\|v\|_2^2(t)+\int_s^t\|\nabla v\|_2^2(\tau)d\tau=\frac12\|v\|_2^2(s),
$$
for any $0<s<t<\infty$. By Proposition \ref{propcont}, there is a
sequence of time $\{s_n\}$, with $s_n\rightarrow0^+$, such that
$v(s_n)\rightarrow v_0$ in $L^2(\Omega)$, as $n\rightarrow\infty$.
Choosing $s=s_n$ in the above equality, and letting $n\rightarrow\infty$
yield the conclusion.
\end{proof}

\section{Proof of Theorem \ref{thmwsu}}
\label{secwsu}

With the preparations in the previous section, section \ref{secinterior}, we are now ready to prove Theorem \ref{thmwsu}.

\begin{proof}[\textbf{Proof of Theorem \ref{thmwsu}}]
Denote by $v_{\text{s}}$ the weak solution stated in Theorem \ref{thmwsu}. By assumption, there is a positive time $T_\text{s}$, such that
\begin{eqnarray*}
  v_{\text s}(x,y,z,t)=\bar v_{\text s}(x,y,z,t)+\bar V_{\text s}(x,y,z,t),\quad (x,y,z)\in\Omega, t\in(0,T_{\text s}),\\
  \partial_z\bar v\in L^\infty(0,T_{\text s}; L^2(\Omega))\cap L^2(0,T_{\text s}; H^1_\text{per}(\Omega)), \quad \bar V_{\text s}\in L^\infty(\Omega\times(0,T_{\text s})).
\end{eqnarray*}
Let $\varepsilon_0$ be a sufficiently small positive number, which will be determined later, and suppose that there is a positive time $T_{\text s}'\in(0, T_{\text s})$, such that
$$
\sup_{0<t<T_{\text s}'}\|\bar V_{\text s}\|_\infty(t)\leq\varepsilon_0.
$$
Take an arbitrary weak solution $v_{\text w}$ solution to system
(\ref{main1})--(\ref{ic}), with the same initial data as $v_{\text s}$.

We denote $U_{\text{s}}=(v_{\text{s}}, w_{\text{s}}),
U_{\text{w}}=(v_{\text{w}}, w_{\text{w}})$ and set
$U=(v,w)=U_{\text{w}}-U_{\text{s}}$, where $w_{\text{s}}$ and
$w_{\text{w}}$ are determined in terms of
$v_{\text{s}}$ and $v_{\text{w}}$,
respectively, through the relation (\ref{main2}).
We are going to show that $v_{\text s}\equiv v_{\text w}$, or
equivalently $v\equiv0$.

By Corollary \ref{corinterior}, any weak solution to system
(\ref{main1})--(\ref{ic}) is smooth away from the initial time.
Therefore, we have $v_{\text s}\in C^\infty(\bar\Omega
\times(0,\infty))$ and $v_{\text w}\in
C^\infty(\bar\Omega\times(0,\infty))$.
By the same argument as that for (\ref{A2}),
we have the following
\begin{align}
   \frac12\|v\|_2^2(t)+\int_s^t\|\nabla v\|_2^2(\tau)d\tau
  \leq&\int_s^t\int_\Omega[(U_\text{w}\cdot\nabla)v_{\text w}\cdot v_{\text s}+(U_{\text s}\cdot\nabla)v_{\text s}\cdot v_{\text w}] d\textbf{x}d\tau\nonumber\\
  &+\frac12(\|v\|_2^2(s)+\|v_0\|_2^2-\|v_{\text w}\|_2^2(s)),\label{B1}
  \end{align}
  for any $0<s<t<\infty$.
  Recalling (\ref{A3}), we have by integration by parts that
  \begin{align*}
    J:=&\int_s^t\int_\Omega[ (U_\text{w} \cdot\nabla) v_{\text w} \cdot v_{\text s}+(U_{\text s}\cdot\nabla) v_{\text s}\cdot v_{\text w}] d\textbf{x}d\tau\nonumber\\
    =&\int_s^t\int_\Omega(U\cdot\nabla)v\cdot v_{\text s}d\textbf{x}d\tau= \int_s^t\int_\Omega(U\cdot\nabla)v\cdot (\bar v_{\text s}+\bar V_{\text s})d\textbf{x}d\tau\nonumber\\
    =&\int_s^t\int_\Omega (U\cdot\nabla)v\cdot\bar V_{\text s}d\textbf{x}d\tau-\int_s^t\int_\Omega (U\cdot\nabla)\bar v_{\text s}\cdot v d\textbf{x}d\tau=:J_1+J_2,
  \end{align*}
  for any $0<s<t<\infty$.

  For $J_1$, by the assumption, it follows from the H\"older and Young inequalities that
  \begin{align*}
    J_1=&\int_s^t\int_\Omega \left[(v\cdot\nabla_H)v-\int_{-h}^z\nabla_H\cdot
    vd\xi\partial_zv\right]\cdot\bar V_{\text s}d\textbf{x}d\tau\\
    \leq&\frac14\int_s^t\|\nabla_Hv\|_2^2d\tau+\int_s^t \|\bar V_{\text s}\|_\infty^2\|v\|_2^2d\tau+C_0\int_s^t\|\bar V_{\text s}\|_\infty\|\nabla v\|_2^2d\tau\\
    \leq&\left(\frac14+C_0\varepsilon_0\right)\int_s^t\|\nabla v\|_2^2+\varepsilon_0^2\int_s^t\|v\|_2^2(\tau)d\tau,
  \end{align*}
  for any $0<s<t\leq T_{\text s}'$, where $C_0$ is a positive constant depending only on $h$. Choosing $\varepsilon_0=\min\left\{1,\frac{1}{8C_0}\right\}$, then one has
  $$
  J_1\leq\frac38\int_s^t\|\nabla v\|_2^2+\int_s^t\|v\|_2^2(\tau)d\tau,
  $$
  for any $0<s<t\leq T_{\text s}'$.
  While for $J_2$, the same argument as that for (\ref{A4}) yields
  \begin{align*}
    J_2
    \leq \frac 18\int_s^t \|\nabla_Hv\|_2^2 d\tau+C\int_s^t(1+\|\partial_z\bar v_{\text s}\|_2^2)
     (1+\|\nabla \bar v_{\text s}\|_2^2+\|\nabla_H\partial_z\bar v_{\text s}\|_2^2)\|v\|_2^2d\tau,
  \end{align*}
  for any $0<s<t<\infty$.

  Thanks to the estimates for $J_1$ and $J_2$, it follows from (\ref{B1}) that
  \begin{align*}
  \|v\|_2^2(t)+\int_s^t\|\nabla v\|_2^2(\tau)d\tau
  \leq&C\int_s^t(1+\|\partial_z\bar v_{\text s}\|_2^2)
     (1+\|\nabla \bar v_{\text s}\|_2^2+\|\nabla_H\partial_z\bar v_{\text s}\|_2^2)\|v\|_2^2d\tau\nonumber\\
  &+(\|v\|_2^2(s)+\|v_0\|_2^2-\|v_{\text w}\|_2^2(s)),
  \end{align*}
  for any $0<s<t\leq T_{\text s}'$. By Proposition \ref{propcont},
  there is a sequence $\{s_n\}$, with $s_n\rightarrow0^+$, as
  $n\rightarrow\infty$, such that both $v_{\text s}(s_n)$ and
  $v_{\text w}(s_n)$ converge to $v_0$ in $L^2(\Omega)$, as
  $n\rightarrow\infty$.
  Choosing $s=s_n$ in the above inequality, and taking
  $n\rightarrow\infty$ lead to
  \begin{align*}
  \|v\|_2^2(t)+\int_0^t\|\nabla v\|_2^2(\tau)d\tau
  \leq&C\int_0^t(1+\|\partial_z\bar v_{\text s}\|_2^2)
     (1+\|\nabla \bar v_{\text s}\|_2^2+\|\nabla_H\partial_z\bar v_{\text s}\|_2^2)\|v\|_2^2d\tau,
  \end{align*}
  for any $0<t\leq T_{\text s}'$.
  By assumption, it is clear that
  $$
  (1+\|\partial_z\bar v_{\text s}\|_2^2(\tau))(1+\|\nabla \bar v_{\text s}\|_2^2(\tau)+\|\nabla_H\partial_z\bar v_{\text s}\|_2^2(\tau))\in L^1((0,T_{\text s})).
  $$
  Therefore, one can apply the Gronwall inequality to the previous
  inequality to conclude that $\|v\|_2^2(t)=0$, for $t\in[0,T_{\text
  s}']$, that is $v_{\text s}\equiv v_{\text w}$, on
  $\Omega\times[0,T_{\text s}']$. Starting from time $T_{\text s}'$,
  both $v_{\text s}$ and $v_{\text w}$ are smooth, and thus are both strong
  solutions to system (\ref{main1})--(\ref{ic}), on
  $\Omega\times(T_{\text s}',\infty)$, with the same initial data at
  time $T_{\text s}'$. By the uniqueness of strong solutions to the
  primitive equations, we have $v_{\text s}\equiv v_{\text w}$, on
  $\Omega\times[T_\text{s}',\infty)$. This completes the proof of
  Theorem \ref{thmwsu}.
\end{proof}

\section{A priori estimates for regular solutions}
\label{sec3}

Let $v_0=\bar v_0+V_0$, with $\bar v_0\in X\cap\mathcal H$ and
$V_0\in L^\infty(\Omega)\cap\mathcal H$. Extend $v_0$, $\bar v_0$ and
$V_0$ periodically to the whole space, and still use the same
notations to denote the relevant extensions. Let $j_\varepsilon$, as before, be a standard compactly supported nonnegative mollifier. Set $\bar v_{0\varepsilon}=\bar v_0*j_\varepsilon$, $V_{0\varepsilon}=V_0*j_\varepsilon$ and $v_{0\varepsilon}=v_0*j_\varepsilon$. It is obvious that $\bar v_{0\varepsilon}\in\mathcal H$, $V_{0\varepsilon}\in\mathcal H$, and $v_{0\varepsilon}=\bar v_{0\varepsilon}+V_{0\varepsilon}\in\mathcal H$. Moreover, we have, for any $q\in[1,\infty]$, and $\varepsilon\in(0,\min\{1,2h\})$,
\begin{eqnarray*}
  &&\|v_{0\varepsilon}\|_q\leq9\|v_0\|_q,\quad\|\bar v_{0\varepsilon}\|_X\leq9\|\bar v_0\|_X,\quad
\|V_{0\varepsilon}\|_\infty\leq\|V_0\|_\infty.
\end{eqnarray*}

Note that $v_{0\varepsilon}\in H^1_\text{per}(\Omega)\cap\mathcal H$,
and consequently, there is a unique global strong solution
$v_\varepsilon$ (cf. \cite{CAOTITI2}),
to system (\ref{main1})--(\ref{ic}), with initial data
$v_{0\varepsilon}$.
Moreover, $v_\varepsilon$ satisfies the basic energy identity and some
additional $L^q$ estimates, which are stated in the following:

\begin{proposition}[\textbf{$L^q$ estimate on $v_\varepsilon$}]
  \label{prop3.1}
Let $v_\varepsilon$ be the unique global strong solution to
system (\ref{main1})--(\ref{ic}), with initial data $v_{0\varepsilon}$.
Then we have the basic energy identity
$$
\frac{1}{2}\|v_\varepsilon\|_2^2(t)+\int_0^t\|\nabla v_\varepsilon\|_2^2(\tau)d\tau=\frac{1}{2}\|v_{0\varepsilon}\|_2^2,
$$
and the $L^4(\Omega)$ estimate
$$
\sup_{0\leq s\leq t}\|v_\varepsilon\|_4(s)\leq \exp\{Ce^{2t}(t+1)(1+\|v_0\|_2^2)^2\}(1+\|v_0\|_4),
$$
for any $t\in[0,\infty)$.
Furthermore, for any $q\in[4,\infty)$, we have
$$
\sup_{0\leq s\leq t}\|v_\varepsilon\|_q(s)\leq \sqrt qK_1(t)(1+\|v_{0}\|_q),
$$
for any $t\in[0,\infty)$, where $C$ is a positive constant depending only on $h$, and $K_1$ is a continuously increasing function on $[0,\infty)$ determined by $h$ and $\|v_0\|_4$.
\end{proposition}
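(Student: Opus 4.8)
The plan is to derive all three assertions from $L^q$-energy estimates on the smooth strong solution $v_\varepsilon$. Since $v_{0\varepsilon}=v_0*j_\varepsilon$ is smooth and periodic, one may take $v_\varepsilon$ smooth up to $t=0$; together with $v_\varepsilon\in C([0,\infty);H^1_{\text{per}}(\Omega)\cap\mathcal H)\cap L^2_{\text{loc}}([0,\infty);H^2_{\text{per}}(\Omega))$ and $\partial_tv_\varepsilon\in L^2_{\text{loc}}([0,\infty);L^2(\Omega))$, this makes all the manipulations below legitimate (after a routine truncation of the test function for large $q$). Write $v=v_\varepsilon$, $u=(v,w)$ with $w$ given by (\ref{main2}). \textbf{Energy identity.} First I would multiply (\ref{main1}) by $v$ and integrate over $\Omega$: the Coriolis term vanishes pointwise since $(f_0k\times v)\cdot v=0$; since $\nabla_H\cdot v+\partial_zw=0$, $w|_{z=\pm h}=0$ and everything is periodic in $\textbf x^H$, the convection terms integrate to $\int_\Omega u\cdot\nabla(\tfrac12|v|^2)\,d\textbf x=0$; and the pressure term vanishes because $p$ is $z$-independent, $\int_\Omega\nabla_Hp\cdot v\,d\textbf x=\int_M\nabla_Hp\cdot\bigl(\int_{-h}^hv\,dz\bigr)d\textbf x^H=-\int_Mp\,\nabla_H\cdot\bigl(\int_{-h}^hv\,dz\bigr)d\textbf x^H=0$ by (\ref{main1'}). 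This gives $\tfrac12\tfrac{d}{dt}\|v\|_2^2+\|\nabla v\|_2^2=0$, hence the energy identity and, in particular, $\sup_{0\le s\le t}\|v\|_2^2+2\int_0^t\|\nabla v\|_2^2\,d\tau=\|v_{0\varepsilon}\|_2^2\le 81\|v_0\|_2^2$.

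\textbf{$L^q$ differential inequality.} For $q\in[4,\infty)$ I would test (\ref{main1}) with $|v|^{q-2}v$. Exactly as above the Coriolis and convection terms drop ($\int_\Omega u\cdot\nabla(\tfrac1q|v|^q)\,d\textbf x=0$), while the diffusion produces $\int_\Omega|v|^{q-2}|\nabla v|^2\,d\textbf x+(q-2)\int_\Omega|v|^{q-4}|v\cdot\nabla v|^2\,d\textbf x$; one is left with $\tfrac1q\tfrac{d}{dt}\|v\|_q^q+\int_\Omega|v|^{q-2}|\nabla v|^2\,d\textbf x\le|\mathcal P|$, where $\mathcal P=\int_\Omega\nabla_Hp\cdot|v|^{q-2}v\,d\textbf x$. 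To control $\mathcal P$ I would exploit that the vertical average $\bar v=\tfrac1{2h}\int_{-h}^hv\,dz$ satisfies a two-dimensional Navier--Stokes type system with forcing $\nabla_H\cdot\overline{v\otimes v}$ — obtained by averaging (\ref{main1}) in $z$ and using $w|_{z=\pm h}=0$, $\partial_zv|_{z=\pm h}=0$ — so that $\nabla_Hp=-\mathbb Q\bigl(\nabla_H\cdot\overline{v\otimes v}+f_0k\times\bar v\bigr)$ with $\mathbb Q$ the gradient part of the two-dimensional Leray projector. Then Calder\'on--Zygmund estimates on $M$, Minkowski's inequality to pull the vertical average outside the horizontal norms, and the anisotropic Ladyzhenskaya-type inequalities of Lemma \ref{lad} would let me bound $|\mathcal P|\le\tfrac12\int_\Omega|v|^{q-2}|\nabla v|^2\,d\textbf x+C(q)\,\phi(\tau)\,\|v\|_q^q+(\text{lower-order terms})$, with $\phi\in L^1_{\text{loc}}$ controlled by the energy bound; absorbing the first term on the right into the left yields a closed differential inequality for $\|v\|_q^q$.

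\textbf{Conclusion by Gronwall.} For $q=4$ the quantity $\phi$ and the lower-order terms involve only $\|v\|_2$ and $\|\nabla v\|_2$, so $\int_0^t\phi\,d\tau\le C(t+1)(1+\|v_0\|_2^2)$ by the energy identity; Gronwall, after collecting the contribution of the lower-order terms and using $\|v_{0\varepsilon}\|_4\le9\|v_0\|_4$, gives precisely $\sup_{0\le s\le t}\|v\|_4\le\exp\{Ce^{2t}(t+1)(1+\|v_0\|_2^2)^2\}(1+\|v_0\|_4)$. For general $q\ge4$, once the $L^4$ bound is available one may take $\phi$ and the lower-order terms to depend only on powers of $\|v\|_4$ and $\|\nabla v\|_2$, already controlled; Gronwall then produces $\sup_{0\le s\le t}\|v\|_q\le\sqrt q\,K_1(t)(1+\|v_0\|_q)$ with $K_1$ continuous, increasing and depending only on $h$ and $\|v_0\|_4$, the factor $\sqrt q$ being the multiplicative constant left over from the previous step.

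I expect the estimate of $\mathcal P$ to be the main obstacle: one must bound $\int_\Omega\nabla_Hp\cdot|v|^{q-2}v\,d\textbf x$ using no more regularity of $v$ than the diffusive term $\int_\Omega|v|^{q-2}|\nabla v|^2\,d\textbf x$ can absorb — the naive route through $\|\nabla_Hp\|_{L^r(M)}$ costs a full derivative and has to be reorganized carefully via Lemma \ref{lad} — and simultaneously keep the $q$-dependence of every constant under control, in particular the $\sqrt q$-type growth of the two-dimensional Sobolev embedding constants that ultimately yields the $\sqrt q$ prefactor. Everything else — the vanishing of the convection, Coriolis and pressure terms in the $L^2$ estimate, the algebraic identities for the diffusive term, and the Gronwall arguments — is routine.
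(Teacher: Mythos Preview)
Your energy identity argument matches the paper's one-line proof exactly. For the $L^4$ and general $L^q$ bounds, however, the paper does not argue directly: it simply quotes Proposition~3.1 and inequality~(3.9) of Cao--Li--Titi \cite{CAOLITITI3}, and then uses $\|v_{0\varepsilon}\|_q\le 9\|v_0\|_q$ to replace $\|v_{0\varepsilon}\|_q$ by $\|v_0\|_q$. So the paper's ``proof'' is essentially a citation, whereas you are sketching a self-contained derivation.

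Your sketch is the right strategy --- it is, in outline, what the cited reference does: test with $|v|^{q-2}v$, observe that convection and Coriolis drop, express $p$ through the vertically averaged 2D elliptic problem, and estimate the pressure term via Calder\'on--Zygmund bounds combined with the anisotropic trilinear inequalities of Lemma~\ref{lad}, then close by Gronwall. The one place where your outline is genuinely thin is the bound on $\mathcal P$: getting from ``Calder\'on--Zygmund plus Lemma~\ref{lad}'' to an inequality where only $\tfrac12\int|v|^{q-2}|\nabla v|^2$ appears on the right, with coefficients depending only on $\|v\|_2,\|\nabla v\|_2$ (for $q=4$) or on $\|v\|_4,\|\nabla v\|_2$ (for general $q$), and with the correct polynomial dependence on $q$, requires several concrete interpolation steps that you have not written down. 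In particular, obtaining the precise shape $\exp\{Ce^{2t}(t+1)(1+\|v_0\|_2^2)^2\}$ and the $\sqrt q$ prefactor demands tracking the constants through a specific chain of Gagliardo--Nirenberg and Young inequalities; your phrase ``the factor $\sqrt q$ being the multiplicative constant left over'' is a placeholder rather than an argument. None of this is wrong, but if you intend a self-contained proof rather than a citation, that step needs to be carried out explicitly.
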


\begin{proof}
Multiplying equation (\ref{main1}) by $v_\varepsilon$ and integrating by parts yields the first conclusion. The second and third ones are direct corollaries of Proposition 3.1 in \cite{CAOLITITI3}. In fact, by (3.9) in \cite{CAOLITITI3}, it follows
\begin{align*}
\sup_{0\leq s\leq t}\|v_\varepsilon\|_4(s)\leq& \exp\{Ce^{2t}(t+1)(1+\|v_{0\varepsilon}\|_2^2)^2\}(1+\|v_{0\varepsilon}\|_4)\\ \leq& \exp\{Ce^{2t}(t+1)(1+\|v_0\|_2^2)^2\}(1+\|v_0\|_4),
\end{align*}
for a positive constant $C$ depending only on $h$,
and by Proposition 3.1 (iii) in \cite{CAOLITITI3}, it follows that
$$
\sup_{0\leq s\leq t}\|v_\varepsilon\|_q(s)\leq \sqrt qK_1(t)(1+\|v_{0\varepsilon}\|_q)\leq \sqrt qK_1(t)(1+\|v_{0 }\|_q),
$$
for some function $K_1$, which is determined by the upper bonds of $\|v_{0\varepsilon}\|_2$ and $\|v_{0\varepsilon}\|_4$; however, since
$\|v_{0\varepsilon}\|_2\leq\|v_0\|_2$ and $\|v_{0\varepsilon}\|_4\leq\|v_0\|_4$, such function can be chosen to be independent of $\varepsilon$.
\end{proof}

Next, we show that away from the initial time, we have the $H^1$ estimates for $v_\varepsilon$.

\begin{proposition}[\textbf{$H^1$ estimate on $v_\varepsilon$}]\label{prop3.2}
Let $v_\varepsilon$ be the unique global solution to system (\ref{main1})--(\ref{ic}), with initial data $v_{0\varepsilon}$. Then, for any $0<t<T<\infty$, we have
$$
\sup_{t\leq s\leq T}\|v_\varepsilon\|_{H^1}^2(s)+\int_t^T(\|\nabla^2 v_\varepsilon\|_2^2+\|\partial_tv_\varepsilon\|_2^2)(s)ds\leq K_2(t,T),
$$
where $K_2$ is a continuous function defined on $\mathbb R_+^2$ and determined only by $h$ and $\|v_0\|_2^2$, where $\mathbb R_+^2=(0,\infty)\times(0,\infty)$.
\end{proposition}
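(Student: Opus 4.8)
The plan is to avoid estimating $v_\varepsilon$ directly up to $t=0$ and instead to \emph{restart the solution at a well-chosen positive time}, at which $H^1$ control is available purely from the energy identity, and then to feed this datum into the standard quantitative global regularity theory for strong solutions of the primitive equations. This is exactly what makes $K_2$ depend on $\|v_0\|_2$ alone and not on $\|v_0\|_4$ (the latter being what the $L^4$ bound of Proposition \ref{prop3.1} would otherwise force). Concretely, fix $0<t<T<\infty$. By the basic energy identity of Proposition \ref{prop3.1} and the bound $\|v_{0\varepsilon}\|_2\le9\|v_0\|_2$, one has $\int_0^t\|\nabla v_\varepsilon\|_2^2(s)\,ds\le\frac12\|v_{0\varepsilon}\|_2^2\le\frac{81}{2}\|v_0\|_2^2$. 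Since $v_\varepsilon$ is a strong solution, $s\mapsto\|\nabla v_\varepsilon\|_2^2(s)$ is continuous on $[0,T]$, so there is $t_\ast=t_\ast(\varepsilon,t)\in(0,t)$ with $\|\nabla v_\varepsilon\|_2^2(t_\ast)\le\frac{81}{2t}\|v_0\|_2^2$; together with $\|v_\varepsilon\|_2^2(t_\ast)\le\|v_{0\varepsilon}\|_2^2\le81\|v_0\|_2^2$ this yields $\|v_\varepsilon(t_\ast)\|_{H^1}^2\le N(t):=81\bigl(1+\frac{1}{2t}\bigr)\|v_0\|_2^2$, and hence $\|v_\varepsilon(t_\ast)\|_6^2\le C_hN(t)$ by $H^1_\text{per}(\Omega)\hookrightarrow L^6(\Omega)$, where $C_h$ depends only on $h$; moreover $v_\varepsilon(t_\ast)\in H^1_\text{per}(\Omega)\cap\mathcal H$.

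Since $v_\varepsilon$ is \emph{the} unique global strong solution, on $[t_\ast,\infty)$ it coincides with the strong solution issued from the datum $v_\varepsilon(t_\ast)$. I would then invoke the quantitative global regularity of strong solutions to the primitive equations (Cao--Titi \cite{CAOTITI2}, Kukavica--Ziane \cite{KZ07A,KZ07B}): for every $u_0\in H^1_\text{per}(\Omega)\cap\mathcal H$ and every $\tau>0$ the strong solution $u$ satisfies $\sup_{0\le s\le\tau}\|u\|_{H^1}^2(s)+\int_0^\tau\bigl(\|\nabla^2u\|_2^2+\|\partial_tu\|_2^2\bigr)(s)\,ds\le\mathcal Q\bigl(\tau,\|u_0\|_{H^1}^2,h\bigr)$, with $\mathcal Q$ continuous, nondecreasing in its first two arguments, and involving no further data (the Coriolis term drops out of all the relevant energy-type identities). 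Applying this with $u_0=v_\varepsilon(t_\ast)$ and $\tau=T-t_\ast\le T$, and using $[t,T]\subseteq[t_\ast,T]$ together with the monotonicity of $\mathcal Q$, we get $\sup_{t\le s\le T}\|v_\varepsilon\|_{H^1}^2(s)+\int_t^T\bigl(\|\nabla^2v_\varepsilon\|_2^2+\|\partial_tv_\varepsilon\|_2^2\bigr)(s)\,ds\le\mathcal Q\bigl(T,N(t),h\bigr)=:K_2(t,T)$. As $N(\cdot)$ is continuous on $(0,\infty)$ and involves only $\|v_0\|_2^2$, and $\mathcal Q$ is continuous and determined by $h$, the function $K_2$ is continuous on $\mathbb R_+^2$ and determined only by $h$ and $\|v_0\|_2^2$; and the whole argument is uniform in $\varepsilon$ since the only $\varepsilon$-dependence entered through $\|v_{0\varepsilon}\|_2\le9\|v_0\|_2$.

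For completeness one would recall why such $\mathcal Q$ exists, and where the difficulty sits. The estimate is built in three stages: (a) the equation for $\partial_zv$, obtained by differentiating (\ref{main1}) in $z$, contains \emph{no} pressure term since $p=p(\textbf{x}^H,t)$; testing it by $\partial_zv$, using the anisotropic Ladyzhenskaya-type inequalities of Lemma \ref{lad} together with the energy bound and the $L^6$ bound, and then Gronwall, gives $\partial_zv\in L^\infty_tL^2\cap L^2_tH^1_\text{per}$; (b) decomposing $v=\bar v+\tilde v$ into its barotropic (vertical average) and baroclinic parts and testing the two resulting equations appropriately, one controls the horizontal pressure gradient (which acts only on $\bar v$) by elliptic regularity for the elliptic problem defining $p$, and absorbs the nonlinear and coupling terms by means of (a) and Lemma \ref{lad}, obtaining $\nabla_Hv\in L^\infty_tL^2\cap L^2_tH^1_\text{per}$, hence $v\in L^\infty_tH^1_\text{per}\cap L^2_tH^2_\text{per}$; (c) reading $\partial_tv$ off from (\ref{main1}) and estimating each term (again with elliptic bounds for $\nabla_Hp$ and Lemma \ref{lad} for the nonlinearities) then gives $\partial_tv\in L^2_tL^2$. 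The genuine obstacle — the feature distinguishing the primitive equations from the $2$D Navier--Stokes equations — is the coupling in stage (b): because $w=-\nabla_H\cdot\int_{-h}^zv\,d\xi$ costs one horizontal derivative of $v$, the term $w\partial_zv$ is critical, and the resolution is the Cao--Titi device of bounding $|\nabla_Hv(\textbf{x}^H,z)|\le\frac{1}{2h}\int_{-h}^h|\nabla_Hv|\,dz'+\int_{-h}^h|\nabla_H\partial_zv|\,dz'$ and then invoking Lemma \ref{lad}; this is precisely why the $L^2_tH^1_\text{per}$ control of $\partial_zv$ from stage (a) is the linchpin of the entire argument.
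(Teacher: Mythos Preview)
Your proposal is correct and follows essentially the same approach as the paper: both pick, via the energy identity and a mean-value (pigeonhole) argument, a time $t_\ast\in(0,t)$ at which $\|\nabla v_\varepsilon\|_2^2$ is controlled by $\|v_0\|_2^2/t$, then restart the solution there and invoke the quantitative global $H^1$ estimate for strong solutions from Cao--Titi \cite{CAOTITI2}. The only cosmetic difference is that the paper derives the $\|\partial_tv_\varepsilon\|_{L^2_tL^2_x}$ bound explicitly by testing (\ref{main1}) against $\partial_tv_\varepsilon$ and estimating the resulting terms via Lemma \ref{lad}, whereas you fold this into the invoked function $\mathcal Q$ (and sketch it in your stage (c)); both are fine.
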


\begin{proof}
Fix $T\in(0,\infty)$, and let $t\in[0,T]$. By Proposition \ref{prop3.1}, we have
$$
\int_0^t\|\nabla v_\varepsilon\|_2^2(s)ds\leq\frac{1}{2}\|v_{0\varepsilon}\|_2^2\leq\frac{1}{2}\|v_0\|_2^2,
$$
and thus, one can choose such $t_0\in(0,t)$ that
$$
\|\nabla v_\varepsilon(t_0)\|_2^2\leq\frac{\|v_0\|_2^2}{t}.
$$
Now, taking $t_0$ as the initial time, then it follows from the $H^1$ type energy estimate for the primitive equations, see (77)--(78) in \cite{CAOTITI2}, we obtain
\begin{align*}
  &\sup_{t\leq s\leq T}\|\nabla v_\varepsilon\|_2^2(s)+\int_t^T\|\nabla^2v_\varepsilon\|_2^2(s)ds\nonumber\\
  \leq&\sup_{t_0\leq s\leq T}\|\nabla v_\varepsilon\|_2^2(s)+\int_{t_0}^T\|\nabla^2v_\varepsilon\|_2^2(s)ds\leq K_2''(t,T),
\end{align*}
where $K_2''$ is a continuous function on $\mathbb R_+^2$ determined only by $h$ and $\|v_0\|_2^2$. Thus, recalling the basic energy identity in Proposition \ref{prop3.1}, we then have
\begin{equation}
  \label{neweq1}
  \sup_{t\leq s\leq T}\|v_\varepsilon\|_{H^1}^2(s) +\int_t^T\|\nabla^2v_\varepsilon\|_2^2(s)ds\leq K_2''(t,T)+\|v_0\|_2^2=:K_2'(t,T).
\end{equation}

Next, we estimate $\|\partial_tv_\varepsilon\|_2^2$. Multiplying equation (\ref{main1}) by $\partial_t v_\varepsilon$, and integrating the resultant over $\Omega$, then it follows from integration by parts, and using (\ref{main1'})--(\ref{main2}) that
\begin{align}
  \label{neweq2}
  \|\partial_tv_\varepsilon\|_2^2=&\int_\Omega(\Delta v_\varepsilon-(v_\varepsilon\cdot\nabla_H)v_\varepsilon-w_\varepsilon \partial_zv_\varepsilon)\cdot\partial_tv_\varepsilon d\textbf{x}\nonumber\\
  \leq&\int_M\left(\int_{-h}^h|\nabla_Hv|dz\right)\left( \int_{-h}^h|\partial_zv_\varepsilon| |\partial_tv_\varepsilon|dz\right)d\textbf{x}^H\nonumber\\
  &+\int_\Omega(|\Delta v_\varepsilon|+|v_\varepsilon||\nabla_Hv_\varepsilon|) |\partial_tv_\varepsilon|d\textbf{x}
  =:L_1+L_2.
\end{align}
The estimates on $L_1$ and $L_2$ are as follows.
By Lemma \ref{lad}, it follows from the Poincar\'e and Young inequalities that
\begin{align*}
  L_1\leq&C\|\nabla_Hv_\varepsilon\|_2^{\frac12}(\|\nabla_Hv_\varepsilon\|_2+ \|\nabla_H^2v_\varepsilon\|_2)^{\frac12}\\
  &\times\|\partial_zv_\varepsilon\|_2^{\frac12} (\|\partial_zv_\varepsilon\|_2+\|\nabla_H\partial_zv_\varepsilon\|_2)^{\frac12} \|\partial_tv_\varepsilon\|_2\\
  \leq&C\|\nabla_Hv_\varepsilon\|_2^{\frac12}\|\nabla_H^2v_\varepsilon\|_2^{\frac12} \|\partial_zv_\varepsilon\|_2^{\frac12}\|\nabla\partial_zv_\varepsilon\|_2^{\frac12} \|\partial_tv_\varepsilon\|_2\\
  \leq&\frac14\|\partial_tv_\varepsilon\|_2^2+C\|\nabla v_\varepsilon\|_2^2\|\nabla^2v_\varepsilon\|_2^2,
\end{align*}
for a positive constant depending only on $h$. For $L_2$, by the H\"older, Sobolev, Poincar\'e and Young inequalities, we deduce
\begin{align*}
  L_2\leq&\|\Delta v_\varepsilon\|_2\|\partial_tv_\varepsilon\|_2+\|v_\varepsilon\|_3 \|\nabla_Hv_\varepsilon\|_6\|\partial_tv_\varepsilon\|_2\\
  \leq&\|\Delta v_\varepsilon\|_2\|\partial_tv_\varepsilon\|_2+ C\|v_\varepsilon\|_{H^1}\|\nabla^2v_\varepsilon\|_2\|\partial_tv _\varepsilon\|_2\\
  \leq&\frac14\|\partial_tv_\varepsilon\|_2^2+C\|\nabla^2 v_\varepsilon\|_2^2(1+\|v_\varepsilon\|_{H^1}^2),
\end{align*}
for a positive constant $C$ depending only on $h$. Substituting the above estimates of $L_1$ and $L_2$ into (\ref{neweq2}), and recalling (\ref{neweq1}), we obtain
\begin{align*}
  \int_t^T\|\partial_tv_\varepsilon\|_2^2(s)ds \leq C\int_t^T(1+\|v_\varepsilon\|_{H^1}^2(s))\|\nabla^2v_\varepsilon\|_2^2(s)ds \leq&C(1+K_2'(t,T))^2.
\end{align*}

Combining the above estimate with (\ref{neweq1}) yields the conclusion, with $K_2(t,T)=K_2'(t,T)+C(1+K_2'(t,T))^2$.
This completes the proof.
\end{proof}

To obtain additional estimates on $v_\varepsilon$, we decompose $v_{\varepsilon}$ into two parts
$$
v_\varepsilon=\bar v_\varepsilon+V_\varepsilon,
$$
such that $V_\varepsilon$ is the unique solution to the following linear system
\begin{eqnarray}
  &\partial_tV_\varepsilon+(v_\varepsilon\cdot\nabla_H)V_\varepsilon+w_\varepsilon\partial_zV_\varepsilon +\nabla_HP_\varepsilon(\textbf{x}^H ,t)-\Delta V_\varepsilon+f_0k\times V_\varepsilon=0,\label{3.5}\\
  &\int_{-h}^h\nabla_H\cdot V_\varepsilon(\textbf{x}^H,z,t)dz=0,\label{3.6}
\end{eqnarray}
subject to the periodic boundary condition, with initial data $V_{0\varepsilon}$.

The solvability of the above linear system can be done in the same way (in fact much easier) as for the primitive equations. Based on the $H^1$ theory for the primitive equations, i.e., the global existence of strong solutions with $H^1$ initial data, one can show that the solution $v_\varepsilon$ to the primitive equations with smooth initial data $v_{0\varepsilon}$ is smooth, and as a result, $V_\varepsilon$ is also smooth.

Moreover, we have the $L^\infty$ estimates on $V_\varepsilon$ stated in the following proposition.

\begin{proposition}[\textbf{$L^\infty$ estimates on $V_\varepsilon$}]
  \label{prop3.3}
Let $V_\varepsilon$ be the unique solution to system (\ref{3.5})--(\ref{3.6}), subject to the periodic boundary condition, with initial data $V_{0\varepsilon}$. Then we have the following estimate
$$
\sup_{0\leq s\leq t}\|V_\varepsilon\|_\infty(s)\leq K_3(t)\|V_0\|_\infty,
$$
for any $t\in[0,\infty)$, where
$$
K_3(t)=C(1+\|v_0\|_4)^{40}(t+1)^2\exp\{Ce^{2t}(t+1)(1+\|v_0\|_4)^4\},
$$
for a positive constant $C$ depending only on $h$.
\end{proposition}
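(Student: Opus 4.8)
The plan is to derive the $L^\infty$ bound on $V_\varepsilon$ by a Stampacchia-type truncation argument combined with the iteration Lemma \ref{iterate}. Since system (\ref{3.5})--(\ref{3.6}) is a \emph{linear} advection--diffusion system for $V_\varepsilon$, with the (divergence-free in the barotropic sense) drift $U_\varepsilon=(v_\varepsilon,w_\varepsilon)$ supplied by the already-controlled strong solution $v_\varepsilon$, the pressure $P_\varepsilon$ can be handled exactly as in the primitive equations: $P_\varepsilon$ depends only on $\textbf{x}^H$, so multiplying (\ref{3.5}) by $|V_\varepsilon|^{q-2}V_\varepsilon$ and integrating over $\Omega$ the pressure term becomes $\int_M\nabla_HP_\varepsilon\cdot\int_{-h}^h|V_\varepsilon|^{q-2}V_\varepsilon\,dz\,d\textbf{x}^H$, which is \emph{not} immediately zero but can be split using $|V_\varepsilon|^{q-2}V_\varepsilon = \overline{|V_\varepsilon|^{q-2}V_\varepsilon} + (\text{fluctuation})$ in the $z$-average, so that the mean part pairs against a divergence-free horizontal field and the fluctuation is absorbed by the good diffusion term $\|\nabla(|V_\varepsilon|^{q/2})\|_2^2$ at the cost of a factor polynomial in $q$; this is the standard device from Cao--Li--Titi \cite{CAOLITITI3}, Proposition 3.1, which I will invoke rather than reprove.

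First I would set up the level-set truncation: fix $\kappa>0$ to be chosen (of the size $\|V_0\|_\infty$ times a time-dependent constant), put $\kappa_k = \kappa(2-2^{-k})$, and define $V^{(k)}_\varepsilon = (|V_\varepsilon|-\kappa_k)_+ \frac{V_\varepsilon}{|V_\varepsilon|}$ (the truncation of $V_\varepsilon$ above level $\kappa_k$), with $A_k = \sup_{0\le s\le t}\|V^{(k)}_\varepsilon\|_2^2(s) + \int_0^t\|\nabla V^{(k)}_\varepsilon\|_2^2\,ds$ or a suitable power thereof. Testing (\ref{3.5}) against $|V^{(k)}_\varepsilon|^{?}$-type multipliers (or more simply against $V^{(k)}_\varepsilon$ itself after noting the transport term drops because $U_\varepsilon$ is divergence-free in the required sense and the sets $\{|V_\varepsilon|>\kappa_k\}$ are nested), one obtains an energy inequality for $A_{k+1}$ in terms of integrals of $|v_\varepsilon|\,|\nabla_H V^{(k)}_\varepsilon|\,|V^{(k)}_\varepsilon|$ and the vertical term $|w_\varepsilon|\,|\partial_z V^{(k)}_\varepsilon|\,|V^{(k)}_\varepsilon|$ over $\{|V_\varepsilon|>\kappa_k\}$. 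Using Lemma \ref{lad} to handle the anisotropic structure of $w_\varepsilon = -\nabla_H\cdot\int_{-h}^z v_\varepsilon$, together with the $L^q$ bounds on $v_\varepsilon$ from Proposition \ref{prop3.1} (taking $q$ large but fixed, e.g.\ $q$ around $12$--$24$, to control $\|v_\varepsilon\|_q$ via $\sqrt q K_1(t)(1+\|v_0\|_q)$ — which after interpolation against $\|v_0\|_4$ is where the exponent $40$ and the $(t+1)^2$ factor in $K_3$ will emerge), and Chebyshev's inequality $|\{|V_\varepsilon|>\kappa_k\}| \lesssim (2^{k}/\kappa)^{2}A_{k-1}$ to gain the geometric smallness, one arrives at a recursion of exactly the shape $A_{k+1}\le M_0\delta_0^{2^{k+1}} + M_0^k A_k^2$ demanded by Lemma \ref{iterate}, where $M_0$ bundles $h$, $t$, and the $v_\varepsilon$-norms, and $\delta_0$ is proportional to $\|V_0\|_\infty/\kappa$.

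Then I would apply Lemma \ref{iterate}: choosing $\kappa$ so that $M_0^4\delta_0^2\le 1$ — i.e.\ $\kappa \ge M_0^2\|V_0\|_\infty$ up to constants — forces $A_k\to 0$, hence $(|V_\varepsilon|-2\kappa)_+ \equiv 0$, i.e.\ $\|V_\varepsilon\|_\infty(s)\le 2\kappa$ uniformly on $[0,t]$, which is the claimed bound once $\kappa$ is tracked explicitly as $\tfrac12 K_3(t)\|V_0\|_\infty$ with $K_3$ as stated. I expect the main obstacle to be bookkeeping rather than conceptual: getting the recursion into \emph{precisely} the $M_0\delta_0^{2^{k+1}}+M_0^kA_k^2$ normal form requires carefully choosing which power of the truncated energy to call $A_k$ and carefully exploiting Lemma \ref{lad} so that the vertical nonlinearity $w_\varepsilon\partial_z V_\varepsilon$ — the term responsible for the derivative loss in the primitive equations — is tamed using only $\|\nabla_H v_\varepsilon\|_2$ (from the energy identity, which is globally integrable) and never $\|\nabla_H\partial_z v_\varepsilon\|_2$ or $H^2$-norms of $v_\varepsilon$; it is exactly this point that makes the linearized system easier than the full nonlinear one and that lets the estimate be global in time. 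The final step of converting the dependence on $\|v_0\|_q$ for the auxiliary large $q$ into the stated dependence on $\|v_0\|_4$ alone (via the $\sqrt q K_1$ bound and absorbing the $q$-dependence into the numerical constants, since $q$ is a fixed finite choice) is routine but must be done with care to land on the exponent $40$.
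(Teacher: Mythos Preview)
Your proposal misidentifies where the work lies. When you test (\ref{3.5}) against $V^{(k)}_\varepsilon$ (or, as the paper actually does, against $|V_\varepsilon|^{q-2}V_\varepsilon$), the transport contribution $(v_\varepsilon\cdot\nabla_H)V_\varepsilon + w_\varepsilon\partial_z V_\varepsilon$ vanishes \emph{entirely} by integration by parts, since $(v_\varepsilon,w_\varepsilon)$ is divergence-free and the multiplier is a scalar function of $V_\varepsilon$; the Coriolis term vanishes for the same reason. So the integrals of $|v_\varepsilon|\,|\nabla_H V^{(k)}_\varepsilon|\,|V^{(k)}_\varepsilon|$ and $|w_\varepsilon|\,|\partial_z V^{(k)}_\varepsilon|\,|V^{(k)}_\varepsilon|$ that you intend to control via Lemma~\ref{lad} simply never appear. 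The \emph{only} term on the right-hand side is the pressure, which you wave off as a ``standard device''; in fact it is the entire content of the estimate and the only channel through which $v_\varepsilon$ enters.

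The paper's argument is a Moser iteration, not Stampacchia: one derives the elliptic equation $-\Delta_H P_\varepsilon = \tfrac{1}{2h}\int_{-h}^h[\nabla_H\!\cdot\nabla_H\!\cdot(V_\varepsilon\otimes v_\varepsilon)+f_0\nabla_H\!\cdot(k\times V_\varepsilon)]\,dz$ and uses two-dimensional Calder\'on--Zygmund regularity to obtain $\|P_\varepsilon\|_{L^{4q/(q+2)}(M)}\le C(1+\|v_\varepsilon\|_4)\|V_\varepsilon\|_{2q}$. Combined with the Gagliardo--Nirenberg inequality $\||V_\varepsilon|^{q/2}\|_4\le C\||V_\varepsilon|^{q/2}\|_1^{1/10}(\||V_\varepsilon|^{q/2}\|_1+\|\nabla|V_\varepsilon|^{q/2}\|_2)^{9/10}$ and repeated Young inequalities, this yields $\frac{d}{dt}\|V_\varepsilon\|_q^q\le Cq^{40}(1+\|v_\varepsilon\|_4)^{20}(\|V_\varepsilon\|_{q/2}^{q/2})^2$. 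Setting $A_k=\sup_{[0,t]}\|V_\varepsilon\|_{2^k}^{2^k}$ gives exactly the recursion of Lemma~\ref{iterate}, with the $\delta_0^{2^{k+1}}$ term coming from the initial data $\|V_{0\varepsilon}\|_{2^{k+1}}^{2^{k+1}}\le 2h\|V_0\|_\infty^{2^{k+1}}$; letting $k\to\infty$ gives the $L^\infty$ bound. Note that only $\|v_\varepsilon\|_4$ is used---no higher $L^q$ norms of $v_\varepsilon$, no $\|\nabla_H v_\varepsilon\|_2$, and Lemma~\ref{lad} plays no role---so your explanation of the exponent $40$ and the $\|v_0\|_4$-dependence is also off the mark. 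A level-set scheme could perhaps be pushed through, but it would still hinge on the same pressure estimate, and its natural recursion is of the De~Giorgi shape $A_{k+1}\le Cb^kA_k^{1+\alpha}$ rather than the Moser form that Lemma~\ref{iterate} is tailored to.
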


\begin{proof}
Let $q\in[2,\infty)$. Multiplying equation (\ref{3.5}) by $|V_\varepsilon|^{q-2}V_\varepsilon$, and integrating over $\Omega$, it follows from integration by parts that
\begin{equation*}
  \frac{1}{q}\frac{d}{dt}\|V_\varepsilon\|_q^q+\int_\Omega\nabla V_\varepsilon:\nabla(|V_\varepsilon|^{q-2}V_\varepsilon)d\textbf{x}=\int_\Omega P_\varepsilon(\textbf{x}^H,t)\nabla_H\cdot(|V_\varepsilon|^{q-2}V_\varepsilon)d\textbf{x}.
\end{equation*}
Straightforward calculations yield
\begin{equation*}
  \int_\Omega\nabla V_\varepsilon:\nabla(|V_\varepsilon|^{q-2}V_\varepsilon)d\textbf{x}=\int_\Omega|V_\varepsilon|^{q-2}
  (|\nabla V_\varepsilon|^2+(q-2)|\nabla|V_\varepsilon||^2)d\textbf{x}.
\end{equation*}
We thus have
\begin{equation*}
  \frac{1}{q}\frac{d}{dt}\|V_\varepsilon\|_q^q+\left\||V_\varepsilon|^{\frac{q}{2}-1}\nabla V_\varepsilon\right\|_2^2\leq
  \int_\Omega P_\varepsilon(\textbf{x}^H,t)\nabla_H\cdot(|V_\varepsilon|^{q-2}V_\varepsilon)d\textbf{x}.
\end{equation*}
By the H\"older inequality, we deduce
\begin{align*}
  &\int_\Omega P_\varepsilon(\textbf{x}^H,t)\nabla_H\cdot(|V_\varepsilon|^{q-2}V_\varepsilon)d\textbf{x}\\
  \leq&(q-1) \int_M|P_\varepsilon(\textbf{x}^H,t)|\int_{-h}^h|V_\varepsilon|^{q-2}|\nabla_HV_\varepsilon|dzd\textbf{x}^H\\
  \leq&(q-1)\int_M|P_\varepsilon(\textbf{x}^H,t)|\left(\int_{-h}^h|V_\varepsilon|^{q-2}dz\right)^{\frac{1}{2}}\left(\int_{-h}^h |V_\varepsilon|^{q-2}|\nabla_HV_\varepsilon|^2dz \right)^{\frac{1}{2}}d\textbf{x}^H\\
  \leq&(q-1)\left(\int_M|P_\varepsilon|^{\frac{4q}{q+2}}d\textbf{x}^H\right)^{\frac{q+2}{4q}}\left[\int_M\left(\int_{-h}^h|V_\varepsilon |^{q-2}dz\right)^{\frac{2q}{q-2}}d\textbf{x}^H \right]^{\frac{q-2}{4q}}\left\||V_\varepsilon|^{\frac{q}{2}-1} \nabla_HV_\varepsilon\right\|_2\\
  \leq&(q-1)\|P_\varepsilon\|_{\frac{4q}{q+2},M} \left[\int_M\int_{-h}^h|V_\varepsilon|^{2q}dz\left(\int_{-h}^h1dz \right)^{\frac{q+2}{q-2}}d\textbf{x}^H\right]^{\frac{q-2}{4q}} \left\||V_\varepsilon|^{\frac{q}{2}-1}\nabla_HV_\varepsilon\right\|_2 \\
  \leq&(q-1)(2h)^{\frac{q+2}{4q}}\|P_\varepsilon\|_{\frac{4q}{q+2},M}\|V_\varepsilon\|_{2q}^\frac{q-2}{2}\left\|
  |V_\varepsilon|^{\frac{q}{2}-1}\nabla_HV_\varepsilon\right\|_2,
\end{align*}
which, substituted into the previous inequality, yields
\begin{eqnarray}
  \label{3.9}
  &\frac{1}{q}\frac{d}{dt}\|V_\varepsilon\|_q^q+\left\||V_\varepsilon|^{\frac{q}{2}-1}\nabla V_\varepsilon\right\|_2^2\nonumber\\
  &\leq
  (q-1)(2h)^{\frac{q+2}{4q}}\|P_\varepsilon\|_{\frac{4q}{q+2},M}\|V_\varepsilon\|_{2q}^\frac{q-2}{2}\left\|
  |V_\varepsilon|^{\frac{q}{2}-1}\nabla_HV_\varepsilon\right\|_2.
\end{eqnarray}

We need to estimate the term $\|P_\varepsilon\|_{\frac{4q}{q+2},M}$ on the right-hand side of (\ref{3.9}). Applying the operator $\text{div}_H$ to equation (\ref{3.5}), integrating the resulting equation in $z$ over $(-h, h)$, and using (\ref{3.6}), yield
\begin{align*}
  -\Delta_HP_\varepsilon(\textbf{x}^H,t)
  =&\frac{1}{2h}\int_{-h}^h[\nabla_H\cdot\nabla_H\cdot(V_\varepsilon\otimes v_\varepsilon)+f_0\nabla_H\cdot(k\times V_\varepsilon)] dz.
\end{align*}
Note that $P_\varepsilon$ can be uniquely specified by requiring
$\int_MP_\varepsilon d\textbf{x}^H=0$. Decompose $P_\varepsilon=P_\varepsilon^1+P_\varepsilon^2$, where $P_\varepsilon^1$ and $P_\varepsilon^2$ are the unique solutions to
$$
-\Delta_HP_\varepsilon^1(\textbf{x}^H,t)=\frac{1}{2h}\int_{-h}^h \nabla_H\cdot\nabla_H\cdot (V_\varepsilon\otimes v_\varepsilon) dz,
$$
and
$$
-\Delta_HP_\varepsilon^2(\textbf{x}^H,t)= \frac{f_0}{2h}\int_{-h}^h\nabla_H\cdot(k\times V_\varepsilon) dz,
$$
respectively, subject to the periodic conditions, and the average zero condition $\int_MP_\varepsilon^id\textbf{x}^H=0$, for $i=1,2$.
Thus, noticing that
$\frac{4q}{q+2}\in[2,4)$, for $q\in[2,\infty)$, by the elliptic
regularity estimates, the Sobolev, H\"older and Young
inequalities, we deduce
\begin{align*}
  \|P_\varepsilon\|_{\frac{4q}{q+2},M}\leq& \|P_\varepsilon^1\|_{\frac{4q}{q+2}, M}+\|P_\varepsilon^2\|_{\frac{4q}{q+2},M} \leq\|P_\varepsilon^1\|_{\frac{4q}{q+2}, M}+C\|\nabla P_\varepsilon^2\|_{\frac{4q}{3q+2},M}\\
  \leq&C\left\|\int_{-h}^hV_\varepsilon\otimes v_\varepsilon dz\right\|_{\frac{4q}{q+2},M}+C\left\|\int_{-h}^hk\times V_\varepsilon dz\right\|_{\frac{4q}{3q+2},M}\\
  \leq&C\int_{-h}^h\left\|V_\varepsilon\otimes v_\varepsilon\right\|_{\frac{4q}{q+2},M} dz+C\int_{-h}^h\|V_\varepsilon\|_{\frac{4q}{3q+2},M}dz\\
  \leq&C\int_{-h}^h\|v_\varepsilon\|_{4,M}\|V_\varepsilon\|_{2q,M}dz +C\int_{-h}^h\|V_\varepsilon\|_{2q,M}|M|^{\frac34}dz\\
  \leq& C\|v_\varepsilon\|_4\|V_\varepsilon\|_{2q}(2h)^{\frac34-\frac{1}{2q}} +C\|V_\varepsilon\|_{2q}|M|^{\frac34}(2h)^{1-\frac{1}{2q}}\\
  \leq&C(1+2h)(1+\|v_\varepsilon\|_4)\|V_\varepsilon\|_{2q}\leq C(1+\|v_\varepsilon\|_4)\|V_\varepsilon\|_{2q},
\end{align*}
for a positive constant $C$ depending only on $h$.

Substituting the above inequality into (\ref{3.9}) yields
\begin{equation}
  \label{3.10}
  \frac{1}{q}\frac{d}{dt}\|V_\varepsilon\|_q^q+\left\||V_\varepsilon|^{\frac{q}{2}-1}\nabla V_\varepsilon\right\|_2^2\leq Cq(1+\|v_\varepsilon\|_4)\|V_\varepsilon\|_{2q}^\frac{q}{2}\left\|
  |V_\varepsilon|^{\frac{q}{2}-1}\nabla_HV_\varepsilon\right\|_2,
\end{equation}
for a positive constant $C$ depending only on $h$. By the Poincar\'e inequality and the Gagliardo-Nirenberg-Sobolev inequality, $\|\varphi\|_4\leq C\|\varphi\|_1^{\frac{1}{10}}\|\nabla\varphi\|_2^{\frac{9}{10}}$, for any average zero function $\varphi$, we deduce
$$
\|V_\varepsilon\|_{2q}^{\frac{q}{2}}=\left\||V_\varepsilon|^{\frac{q}{2}}\right\|_4\leq C\left\||V_\varepsilon|^{\frac{q}{2}}\right\|_1
^{\frac{1}{10}}\left(\left\||V_\varepsilon|^{\frac{q}{2}}\right\|_1+\left\|\nabla|V_\varepsilon|^{\frac{q}{2}}\right\|_2\right) ^{\frac{9}{10}},
$$
which, substituted into (\ref{3.10}), and using the Young inequality, yields
\begin{align*}
  &\frac{1}{q}\frac{d}{dt}\|V_\varepsilon\|_q^q+\left\||V_\varepsilon|^{\frac{q}{2}-1}\nabla V_\varepsilon\right\|_2^2\\
  \leq& Cq(1+\|v_\varepsilon\|_4)\left\||V_\varepsilon|^{\frac{q}{2}} \right\|_1^{\frac{1}{10}} \left(\left\||V_\varepsilon|^{\frac{q}{2}}\right\|_1+ \left\|\nabla|V_\varepsilon|^{\frac{q}{2}}\right\|_2\right)^{\frac{9}{10}} \left\||V_\varepsilon|^{\frac{q}{2}-1}\nabla_HV_\varepsilon\right\|_2\\
  \leq&Cq(1+\|v_\varepsilon\|_4) \left(\left\||V_\varepsilon|^{\frac{q}{2}}\right\|_1 +q^{\frac{9}{10}}\left\||V_\varepsilon|^{\frac{q}{2}} \right\|_1^{\frac{1}{10}} \left\||V_\varepsilon|^{\frac{q}{2}-1}\nabla V_\varepsilon\right\|_2^{\frac{9}{10}}\right) \left\||V_\varepsilon|^{\frac{q}{2}-1}\nabla_HV_\varepsilon\right\|_2\\
  \leq&\frac{1}{4}\left\||V_\varepsilon|^{\frac{q}{2}-1}\nabla V_\varepsilon\right\|_2^2+Cq^2(1+\|v_\varepsilon\|_4)^2 \left(\left\||V_\varepsilon|^{\frac{q}{2}}\right\|_1^2 +q^{\frac{9}{5}}\left\||V_\varepsilon|^{\frac{q}{2}} \right\|_1^{\frac{1}{5}} \left\||V_\varepsilon|^{\frac{q}{2}-1}\nabla V_\varepsilon\right\|_2^{\frac{9}{5}}\right)\\
  \leq&\frac{1}{2}\left\||V_\varepsilon|^{\frac{q}{2}-1}\nabla V_\varepsilon\right\|_2^2+Cq^2(1+\|v_\varepsilon\|_4)^2\left\||V_\varepsilon|^{\frac{q}{2}}\right\|_1^2
  +Cq^{38}(1+\|v_\varepsilon\|_4)^{20}\left\||V_\varepsilon|^{\frac{q}{2}}\right\|_1^2\\
  \leq&\frac{1}{2}\left\||V_\varepsilon|^{\frac{q}{2}-1}\nabla V_\varepsilon\right\|^2+Cq^{38}(1+\|v_\varepsilon\|_4)^{20}\left(\|V_\varepsilon\|_{\frac{q}{2}}^\frac{q}{2}\right)^2.
\end{align*}
Therefore, applying Proposition \ref{prop3.1}, we have
\begin{align*}
  \frac{d}{dt}\|V_\varepsilon\|_q^q\leq&Cq^{40}(1+\|v_\varepsilon\|_4)^{20} \left(\|V_\varepsilon\|_{\frac{q}{2}}^\frac{q}{2}\right)^2\\
  \leq&Cq^{40}\left[(1+\|v_0\|_4)\exp\{Ce^{2t}(t+1)(1+\|v_0\|_2^2)^2\} \right]^{20}
  \left(\|V_\varepsilon\|_{\frac{q}{2}}^\frac{q}{2}\right)^2\\
  \leq&Cq^{40}\left[(1+\|v_0\|_4)\exp\{Ce^{2t}(t+1)(1+\|v_0\|_4)^4\} \right]^{20}
  \left(\|V_\varepsilon\|_{\frac{q}{2}}^\frac{q}{2}\right)^2\\
  =&Cq^{40}S_0(t)\left(\|V_\varepsilon\|_{\frac{q}{2}}^\frac{q}{2}\right)^2,
\end{align*}
with $K_0$ given by
$$
S_0(t)=[(1+\|v_0\|_4)\exp\{Ce^{2t}(t+1)(1+\|v_0\|_4)^4\}]^{20},
$$
and $C$ a positive constant depending only on $h$.
Set $\delta_0=\|V_0\|_\infty$. Recalling that $\|V_{0\varepsilon}\|_\infty\leq\|V_0\|_\infty$, it follows that $\|V_{0\varepsilon}\|_q^q\leq 2h\delta_0^q$. On account of this, it follows from the previous inequality that
\begin{equation}\label{neweq3}
\sup_{0\leq s\leq t}\|V_\varepsilon\|_q^q\leq 2h\delta_0^q+C_0^* q^{40}S_1(t)\left(\sup_{0\leq s\leq t}\|V_\varepsilon\|_{\frac{q}{2}}^{\frac{q}{2}}\right)^2,
\end{equation}
for a constant $C_0^*$ depending only on $h$, where $S_1(t):=\int_0^tS_0(s) ds$.

Define
$$
A_k(t)=\sup_{0\leq s\leq t}\|V_\varepsilon\|_{2^k}^{2^k},\quad k=1,2,\cdots.
$$
Then, setting $q=2^{k+1}$ in (\ref{neweq3}) yields
$$
A_{k+1}(t)\leq 2h\delta_0^{2^{k+1}}+C_0^*2^{40(k+1)}S_1(t)A_k(t)^2,\quad k=1,2,\cdots,
$$
where $C_0^*$ is a positive constant depending only on $h$. Setting
$$
M_0(t)=2h+(1+C_0^*)2^{80}(1+S_1(t)),
$$
then we have
$$
C_0^*2^{40(k+1)}S_1(t)\leq(1+C_0^*)2^{80k}(1+S_1(t)) \leq[(1+C_0^*)2^{80}(1+S_1(t))]^k\leq M_0(t)^k,
$$
and consequently
$$
A_{k+1}(t)\leq M_0(t)\delta_0^{2^{k+1}}+M_0(t)^kA_k(t)^2,\quad k=1,2,\cdots.
$$
It is obviously that $M_0(t)\geq2$. Thus, we can apply Lemma \ref{iterate} to deduce that
$$
A_k(t)\leq (M_0(t)^4\delta_0^2)^{{2^{k-1}}}, \quad k=1,2,\cdots.
$$
Recalling the definition of $A_k$, then for any $s\in[0,t]$, we have
$$
\|V_\varepsilon\|_{2^k}^{2^k}(s)\leq A_k(t)\leq(M_0(t)^4\delta_0^2)^{2^{k-1}},
$$
which implies $
\|V_\varepsilon\|_{2^k}(s)\leq M_0(t)^2\delta_0$, for any positive integer $k$ and every $s\in[0,t]$. Taking $k\rightarrow\infty$, we conclude
$$
\|V_\varepsilon\|_\infty(s)\leq M_0(t)^2\delta_0=M_0(t)^2\|V_0\|_\infty,
$$
for every $s\in[0,t]$, and hence
$$
\sup_{0\leq s\leq t}\|V_\varepsilon\|_\infty(s)\leq M_0(t)^2\|V_0\|_\infty.
$$
Recalling the expressions of $M_0$ and $S_0$, one can easily verify that
$$
M_0(t)^2\leq C(1+\|v_0\|_4)^{40}(t+1)^2\exp\{e^{2t}(t+1)(1+\|v_0\|_4)^4\}=:K_3(t),
$$
for a positive constant $C$, depending only on $h$. Therefore, the conclusion holds.
\end{proof}

Recalling the definition of $V_\varepsilon$, it is then clear that $\bar v_\varepsilon$ satisfies
\begin{eqnarray}
&\partial_t\bar v_\varepsilon+(v_\varepsilon\cdot\nabla_H)\bar v_\varepsilon+w_\varepsilon\partial_z\bar v_\varepsilon+\nabla_H\bar p_\varepsilon(\textbf{x}^H,t)-\Delta \bar v_\varepsilon+f_0k\times
\bar v_\varepsilon=0,\label{3.1}\\
&\int_{-h}^h\nabla_H\cdot \bar v_\varepsilon(\textbf{x}^H,z,t)dz=0,\label{3.2}
\end{eqnarray}
subject to the periodic boundary condition, with initial data $\bar v_{0\varepsilon}$. Moreover, since $v_\varepsilon$ and $ V_\varepsilon$ are both smooth, so is $\bar v_\varepsilon$.

For $\bar v_\varepsilon$, we have the following proposition, which states the estimate of $\bar v_\varepsilon$ in $X$.

\begin{proposition}[\textbf{Estimate on $\bar v_\varepsilon$ in $X$}]
  \label{prop3.4}
Let $\bar v_\varepsilon$ be the unique solution to system (\ref{3.1})--(\ref{3.2}), subject to periodic boundary condition, with initial data $\bar v_{0\varepsilon}$. Then we have the following estimate
$$
\sup_{0\leq s\leq t}(\|\bar v_\varepsilon\|_2^2+\|\partial_z\bar v_\varepsilon\|_2^2) +\int_0^t(\|\nabla \bar v_\varepsilon\|_2^2+\|\nabla\partial_z\bar v_\varepsilon\|_2^2)d\tau\leq K_4(t),
$$
where $K_4$ is a continuous function on $[0,\infty)$, determined only by $h$, and the initial norms $\|\bar v_0\|_2, \|V_0\|_\infty$ and $\|v_0\|_6$.
\end{proposition}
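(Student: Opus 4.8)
The plan is to prove the two bounds separately and then combine them; everything is carried out on the smooth solution $\bar v_\varepsilon$ of the linear system (\ref{3.1})--(\ref{3.2}), and all constants are uniform in $\varepsilon$ thanks to Propositions \ref{prop3.1} and \ref{prop3.3}. The $L^2$--$H^1$ part is immediate: testing (\ref{3.1}) with $\bar v_\varepsilon$, the transport term $\int_\Omega[(v_\varepsilon\cdot\nabla_H)\bar v_\varepsilon+w_\varepsilon\partial_z\bar v_\varepsilon]\cdot\bar v_\varepsilon\,d\textbf{x}$ vanishes by the incompressibility $\nabla_H\cdot v_\varepsilon+\partial_zw_\varepsilon=0$, the pressure term $\int_\Omega\nabla_H\bar p_\varepsilon\cdot\bar v_\varepsilon\,d\textbf{x}=-\int_M\bar p_\varepsilon\,\nabla_H\cdot\big(\int_{-h}^h\bar v_\varepsilon\,dz\big)d\textbf{x}^H$ vanishes by (\ref{3.2}), and the Coriolis term vanishes, so that $\tfrac12\tfrac{d}{dt}\|\bar v_\varepsilon\|_2^2+\|\nabla\bar v_\varepsilon\|_2^2=0$; since $\|\bar v_{0\varepsilon}\|_2\le C\|\bar v_0\|_X$, this gives $\sup_{0\le s\le t}\|\bar v_\varepsilon\|_2^2+2\int_0^t\|\nabla\bar v_\varepsilon\|_2^2\,d\tau\le C\|\bar v_0\|_X^2$.

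For the bound on $\partial_z\bar v_\varepsilon$ I would first record an $L^6$ bound: since $\bar v_\varepsilon=v_\varepsilon-V_\varepsilon$, Propositions \ref{prop3.1} and \ref{prop3.3} give $\sup_{0\le s\le t}\|\bar v_\varepsilon\|_6(s)\le G(t)$ for some continuous $G$ depending only on $h$, $\|v_0\|_6$ and $\|V_0\|_\infty$. Next, differentiate (\ref{3.1}) in $z$; the pressure drops out because $\bar p_\varepsilon=\bar p_\varepsilon(\textbf{x}^H,t)$. Testing the resulting equation with $\partial_z\bar v_\varepsilon$, using once more $\nabla_H\cdot v_\varepsilon+\partial_zw_\varepsilon=0$ to cancel the transport of $\partial_z\bar v_\varepsilon$ together with part of the stretching terms, and noting the Coriolis term vanishes, one is left with
\begin{equation*}
\tfrac12\tfrac{d}{dt}\|\partial_z\bar v_\varepsilon\|_2^2+\|\nabla\partial_z\bar v_\varepsilon\|_2^2=\int_\Omega(\nabla_H\cdot v_\varepsilon)\,|\partial_z\bar v_\varepsilon|^2\,d\textbf{x}-\int_\Omega(\partial_zv_\varepsilon\cdot\nabla_H)\bar v_\varepsilon\cdot\partial_z\bar v_\varepsilon\,d\textbf{x}=:R_2+R_1.
\end{equation*}

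The key step is to split $v_\varepsilon=\bar v_\varepsilon+V_\varepsilon$ inside $R_1$ and $R_2$ and treat the two sets of contributions differently. In the $V_\varepsilon$-contributions $-\int(\partial_zV_\varepsilon\cdot\nabla_H)\bar v_\varepsilon\cdot\partial_z\bar v_\varepsilon$ and $\int(\nabla_H\cdot V_\varepsilon)|\partial_z\bar v_\varepsilon|^2$, only $\|V_\varepsilon\|_\infty$ (and not $\partial_zV_\varepsilon$) is available, so one integrates by parts to move every derivative off $V_\varepsilon$; after Cauchy--Schwarz and Young these are bounded by $\tfrac18\|\nabla\partial_z\bar v_\varepsilon\|_2^2+C\|V_\varepsilon\|_\infty^2(\|\partial_z\bar v_\varepsilon\|_2^2+\|\nabla_H\bar v_\varepsilon\|_2^2)$. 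In the $\bar v_\varepsilon$-contributions $-\int(\partial_z\bar v_\varepsilon\cdot\nabla_H)\bar v_\varepsilon\cdot\partial_z\bar v_\varepsilon$ and $\int(\nabla_H\cdot\bar v_\varepsilon)|\partial_z\bar v_\varepsilon|^2$ --- the genuine primitive-equations nonlinearity --- I would integrate by parts in the horizontal variables so as to transfer the horizontal derivative onto a product of the two $\partial_z\bar v_\varepsilon$ factors, keeping the remaining factor as an undifferentiated $\bar v_\varepsilon$; then Hölder's inequality ($\tfrac16+\tfrac13+\tfrac12=1$), the interpolation $\|\partial_z\bar v_\varepsilon\|_3\le C\|\partial_z\bar v_\varepsilon\|_2^{1/2}(\|\partial_z\bar v_\varepsilon\|_2+\|\nabla\partial_z\bar v_\varepsilon\|_2)^{1/2}$ and Young give a bound $\tfrac18\|\nabla\partial_z\bar v_\varepsilon\|_2^2+C(\|\bar v_\varepsilon\|_6^2+\|\bar v_\varepsilon\|_6^4)\|\partial_z\bar v_\varepsilon\|_2^2$ (alternatively Lemma \ref{lad} may be used here, via the pointwise bound $|\nabla_H\bar v_\varepsilon(\textbf{x}^H,z)|\le\tfrac1{2h}\int_{-h}^h|\nabla_H\bar v_\varepsilon|\,dz+\int_{-h}^h|\nabla_H\partial_z\bar v_\varepsilon|\,dz$). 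Collecting these, absorbing $\|\nabla\partial_z\bar v_\varepsilon\|_2^2$, and invoking the $L^2$ estimate above, one arrives at
\begin{equation*}
\tfrac{d}{dt}\|\partial_z\bar v_\varepsilon\|_2^2+\|\nabla\partial_z\bar v_\varepsilon\|_2^2\le C\big(\|\bar v_\varepsilon\|_6^4+\|V_\varepsilon\|_\infty^2\big)\|\partial_z\bar v_\varepsilon\|_2^2+C\|V_\varepsilon\|_\infty^2\,\|\nabla_H\bar v_\varepsilon\|_2^2,
\end{equation*}
whose coefficient and forcing are bounded, uniformly in $\varepsilon$, in $L^1_{\mathrm{loc}}([0,\infty))$ by continuous functions of $h$, $\|\bar v_0\|_X$, $\|V_0\|_\infty$, $\|v_0\|_6$ (here $\|\nabla_H\bar v_\varepsilon\|_2^2\in L^1_{\mathrm{loc}}$ comes from the $L^2$ estimate, and $\|\bar v_\varepsilon\|_6$, $\|V_\varepsilon\|_\infty$ are bounded). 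Since $\|\partial_z\bar v_{0\varepsilon}\|_2\le C\|\bar v_0\|_X$, Gronwall's inequality produces the asserted estimate, with $K_4$ assembled from the $L^2$ estimate and the above coefficient and forcing.

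I expect the main obstacle to be exactly the $\bar v_\varepsilon$-only nonlinear terms: because $\bar v_\varepsilon$ is controlled only in $L^6$ (not in $L^\infty$, nor in any $L^q$ with $q>6$, nor in $L^4_t(H^2_\textbf{x})$), a naive Hölder estimate of $\int(\partial_z\bar v_\varepsilon\cdot\nabla_H)\bar v_\varepsilon\cdot\partial_z\bar v_\varepsilon$ would require either a second-order derivative of $\bar v_\varepsilon$ or an $L^4$-in-time norm of $\nabla\bar v_\varepsilon$, neither of which is available uniformly in $\varepsilon$ up to $t=0$; the integration-by-parts arrangement that leaves $\bar v_\varepsilon$ undifferentiated in $L^6$ and places the extra horizontal derivative on $\partial_z\bar v_\varepsilon$, where it is absorbed by the dissipation, is what makes the Gronwall coefficient time-integrable and the final bound finite and continuous up to the initial time. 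One must also take care to justify the $z$-differentiation and the integrations by parts by the smoothness of $\bar v_\varepsilon$ recorded before the proposition, and to keep all constants independent of $\varepsilon$ throughout.
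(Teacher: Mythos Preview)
Your proposal is correct and follows essentially the same approach as the paper: test (\ref{3.1}) with $\bar v_\varepsilon$ for the $L^2$ estimate, differentiate in $z$, test with $\partial_z\bar v_\varepsilon$, and handle the two commutator terms by the same integrations by parts you describe (placing $\bar v_\varepsilon$ in $L^6$, $\partial_z\bar v_\varepsilon$ in $L^3$ via interpolation, and moving all derivatives off $V_\varepsilon$ so only $\|V_\varepsilon\|_\infty$ enters). Two cosmetic differences are worth noting: in $R_2=\int(\nabla_H\cdot v_\varepsilon)|\partial_z\bar v_\varepsilon|^2$ the paper does \emph{not} split $v_\varepsilon$ but simply integrates by parts once and uses $\|v_\varepsilon\|_6$ directly; and instead of Gronwall the paper invokes Poincar\'e ($\|\partial_z\bar v_\varepsilon\|_2\le\|\nabla\bar v_\varepsilon\|_2$) to replace every occurrence of $\|\partial_z\bar v_\varepsilon\|_2^2$ on the right by $\|\nabla\bar v_\varepsilon\|_2^2$, reducing the differential inequality to $\tfrac{d}{dt}\|\partial_z\bar v_\varepsilon\|_2^2+\|\nabla\partial_z\bar v_\varepsilon\|_2^2\le C(\|v_\varepsilon\|_6^4+\|V_\varepsilon\|_\infty^4+1)\|\nabla\bar v_\varepsilon\|_2^2$, which is then integrated directly using the $L^2$ energy identity --- this avoids the exponential Gronwall factor, but both routes yield the claimed bound.
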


\begin{proof}
Multiplying equation (\ref{3.1}) by $\bar v_\varepsilon$, and integrating over $\Omega$, then it follows from integration by parts that
\begin{equation*}
  \frac{1}{2}\frac{d}{dt}\|\bar v_\varepsilon\|_2^2+\|\nabla\bar v_\varepsilon\|_2^2=0,
\end{equation*}
which implies
\begin{equation}
  \frac{1}{2}\sup_{0\leq s\leq t}\|\bar v_\varepsilon\|_2^2+\int_0^t\|\nabla\bar v_\varepsilon\|_2^2d\tau\leq\frac{1}{2}\|\bar v_{0\varepsilon}\|_2^2\leq\frac{1}{2}\|\bar v_0\|_2^2. \label{3.1-1}
\end{equation}

Set $\bar u_\varepsilon=\partial_z\bar v_\varepsilon$. Differentiating equation (\ref{3.1}) with respect to $z$ yields
\begin{equation*}
  \partial_z\bar u_\varepsilon+(v_\varepsilon\cdot\nabla_H)\bar u_\varepsilon+w_\varepsilon\partial_z\bar u_\varepsilon-\Delta\bar u_\varepsilon+(\partial_zv_\varepsilon\cdot\nabla_H)\bar v_\varepsilon-(\nabla_H\cdot v_\varepsilon)\bar u_\varepsilon+f_0k\times\bar u_\varepsilon=0.
\end{equation*}
Multiplying the above equation by $\bar u_\varepsilon$, and integrating over $\Omega$, then it follows from integration by parts that
\begin{equation}
  \frac{1}{2}\frac{d}{dt}\|\bar u_\varepsilon\|_2^2+\|\nabla\bar u_\varepsilon\|_2^2=\int_\Omega[(\nabla_H\cdot v_\varepsilon)\bar u_\varepsilon-(\partial_z v_\varepsilon\cdot\nabla_H)\bar v_\varepsilon]\cdot\bar u_\varepsilon d\textbf{x}. \label{3.12}
\end{equation}

We are going to estimate the terms on the right-hand side of the above equality. For the first term, integration by parts, and using the H\"older, Sobolev, Poincar\'e and Young inequalities lead to
\begin{align*}
  I_1:=&\int_\Omega(\nabla_H\cdot v_\varepsilon)|\bar u_\varepsilon|^2d\textbf{x}=-2\int_\Omega(v_\varepsilon\cdot\nabla_H)\bar u_\varepsilon\cdot\bar u_\varepsilon d\textbf{x}\\
  \leq&2\|v_\varepsilon\|_6\|\nabla_H\bar u_\varepsilon\|_2\|\bar u_\varepsilon\|_3\leq C\|v_\varepsilon\|_6\|\nabla_H\bar u_\varepsilon\|_2\|\bar u_\varepsilon\|_2^{\frac{1}{2}}\|\nabla_H\bar u_\varepsilon\|_2^{\frac{1}{2}}\\
  \leq&\frac{1}{6}\|\nabla\bar u_\varepsilon\|_2^2+C\|v_\varepsilon\|_6^4\|\bar u_\varepsilon\|_2^2\leq\frac{1}{6}\|\nabla\bar u_\varepsilon\|_2^2+C\|v_\varepsilon\|_6^4\|\nabla\bar v_\varepsilon\|_2^2.
\end{align*}
For the second term, it follows from integration by parts that
\begin{align*}
  I_2:=&-\int_\Omega(\partial_zv_\varepsilon\cdot\nabla_H)\bar v_\varepsilon\cdot\bar u_\varepsilon d\textbf{x}=-\int_\Omega\partial_z (\bar v_\varepsilon+V_\varepsilon)\cdot\nabla_H\bar v_\varepsilon \cdot\bar u_\varepsilon d\textbf{x}\\
  =&-\int_\Omega(\bar u_\varepsilon\cdot\nabla_H)\bar v_\varepsilon\cdot\bar u_\varepsilon d\textbf{x}-\int_\Omega(\partial_zV_\varepsilon\cdot\nabla_H)\bar v_\varepsilon\cdot\bar u_\varepsilon d\textbf{x}=:I_{21}+I_{22}.
\end{align*}

Estimates on $I_{21}$ and $I_{22}$ are given as follows. Integrating by parts, it follows from the H\"older, Sobolev, Poincar\'e and Young inequalities that
\begin{align*}
  I_{21}=&-\int_\Omega(\bar u_\varepsilon\cdot\nabla_H)\bar v_\varepsilon\cdot \bar u_\varepsilon d\textbf{x}\\
  =&\int_\Omega[(\nabla_H\cdot\bar u_\varepsilon)\bar v_\varepsilon\cdot\bar u_\varepsilon+(\bar u_\varepsilon\cdot\nabla_H) \bar u_\varepsilon\cdot\bar v_\varepsilon]d\textbf{x}\\
  \leq&2\|\nabla_H\bar u_\varepsilon\|_2\|\bar v_\varepsilon\|_6\|\bar u_\varepsilon\|_3\leq C\|\nabla_H\bar u_\varepsilon\|_2\|\bar v_\varepsilon\|_6\|\bar u_\varepsilon\|_2^{\frac{1}{2}}\|\nabla\bar u_\varepsilon\|_2^{\frac{1}{2}}\\
  \leq&
\frac{1}{6}\|\nabla\bar u_\varepsilon\|_2^2+C\|\bar v_\varepsilon\|_6^4\|\bar u_\varepsilon\|_2^2
  \leq \frac{1}{6}\|\nabla\bar u_\varepsilon\|_2^2+C( \|v_\varepsilon\|_6^4+\|V_\varepsilon\|_6^4)\|\bar u_\varepsilon\|_2^2 \\
  \leq&\frac{1}{6}\|\nabla\bar u_\varepsilon\|_2^2+C(\|v_\varepsilon\|_6^4+\|V_\varepsilon\|_\infty^4)\|\nabla\bar v_\varepsilon\|_2^2,
\end{align*}
and
\begin{align*}
  I_{22}=&-\int_\Omega(\partial_z V_\varepsilon\cdot\nabla_H)\bar v_\varepsilon\cdot \bar u_\varepsilon d\textbf{x}\\
  =&\int_\Omega[(V_\varepsilon\cdot\nabla_H)\bar u_\varepsilon\cdot\bar u_\varepsilon+(V_\varepsilon\cdot\nabla_H)\bar v_\varepsilon\cdot\partial_z\bar u_\varepsilon]d\textbf{x}\\
  \leq&\|V_\varepsilon\|_\infty\|\nabla_H\bar u_\varepsilon\|_2\|\bar u_\varepsilon\|_2+\|V_\varepsilon\|_\infty\|\nabla_H\bar v_\varepsilon\|_2\|\partial_z\bar u_\varepsilon\|_2\\
  \leq&\frac{1}{6}\|\nabla\bar u_\varepsilon\|_2^2+C\|V_\varepsilon\|_\infty^2(\|\bar u_\varepsilon\|_2^2+\|\nabla_H\bar v_\varepsilon\|_2^2)\\
  \leq&\frac{1}{6}\|\nabla_H\bar u_\varepsilon\|_2^2+C\|V_\varepsilon\|_\infty^2\|\nabla \bar v_\varepsilon\|_2^2.
\end{align*}

Substituting the estimates for $I_1, I_{21}$ and $I_{22}$ into (\ref{3.12}), we have
$$
\frac{d}{dt}\|\bar u_\varepsilon\|_2^2+\|\nabla\bar u_\varepsilon\|_2^2\leq C(\|v_\varepsilon\|_6^4+\|V_\varepsilon\|_\infty^4+1)\|\nabla\bar v_\varepsilon\|_2^2,
$$
for a constant $C$ depending only on $h$. Integrating the above inequality with respect to time, recalling (\ref{3.1-1}), and applying Proposition \ref{prop3.1} and Proposition \ref{prop3.3}, we have
\begin{align*}
  \sup_{0\leq s\leq t}&\|\bar u_\varepsilon\|_2^2(s)+\int_0^t\|\nabla\bar u_\varepsilon\|_2^2(\tau)d\tau\leq C\sup_{0\leq s\leq t}(\|v_\varepsilon\|_6^4+\|V_\varepsilon\|_\infty^4+1)\int_0^t\|\nabla\bar v_\varepsilon\|_2^2d\tau\\
  \leq&C\|\bar v_0\|_2^2[K_1(t)^4(1+\|v_0\|_6^4)+1+K_3(t)^4\|V_0\|_\infty^4]=:K_4'(t).
\end{align*}
Combining the above inequality with (\ref{3.1-1}), we then obtain
$$
\sup_{0\leq s\leq t}(\|\bar v_\varepsilon\|_2^2+\|\bar u_\varepsilon\|_2^2)
+\int_0^t(\|\nabla\bar v_\varepsilon\|_2^2+\|\nabla\bar u_\varepsilon\|_2^2)d\tau\leq K_4'(t)+\|\bar v_0\|_2^2=:K_4(t),
$$
completing the proof.
\end{proof}

\section{Proof of Theorem \ref{thmain}}
\label{sec4}

Based on the results established in the previous section, we can now give the proof of Theorem \ref{thmain} as follows.

\begin{proof}[\textbf{Proof of Theorem \ref{thmain}}]
(i) Suppose $v_0=\bar v_0+V_0$, with
$\bar v_0\in X\cap\mathcal H$ and $V_0\in L^\infty(\Omega)
\cap\mathcal H$.
Extend $v_0, \bar v_0$ and $V_0$ periodically to the whole space,
and use the same notations to the relevant extensions.
Let $j_\varepsilon$, as before, be the standard compactly supported nonnegative mollifier, and set
$\bar v_{0\varepsilon}=\bar v_0*j_\varepsilon$,
$V_{0\varepsilon}=V_0*j_\varepsilon$ and
$v_{0\varepsilon}=v_0*j_\varepsilon$. It is then obvious that $\bar
v_{0\varepsilon}\in X\cap\mathcal H$, $V_{0\varepsilon}\in
L^\infty(\Omega)\cap\mathcal H$ and $v_{0\varepsilon}=\bar
v_{0\varepsilon}+V_{0\varepsilon}$.

Let $v_\varepsilon$ be the unique global strong solution to system (\ref{main1})--(\ref{ic}), with initial data $v_{0\varepsilon}$. As in section \ref{sec3}, we decompose $v_\varepsilon$ into two parts as
$$
v_\varepsilon=V_\varepsilon+\bar v_\varepsilon,
$$
where $V_\varepsilon$ is the unique solution to system (\ref{3.1})--(\ref{3.2}), subject to periodic boundary conditions, with initial data $V_{0\varepsilon}$. By Propositions \ref{prop3.1}--\ref{prop3.4}, we have the estimates
\begin{eqnarray*}
  &\displaystyle\frac{1}{2}\|v_\varepsilon\|_2^2(t)+\int_0^t\|\nabla v_\varepsilon\|_2^2(\tau)d\tau\leq\frac{1}{2}\|v_0\|_2^2,\\
  &\displaystyle\sup_{0\leq s\leq t}\|v_\varepsilon\|_6(s) \leq \sqrt6K_1(t)(1+\|v_{0}\|_6),\\
  &\displaystyle\sup_{t\leq s\leq T}\| v_\varepsilon\|_{H^1}^2(s)+\int_t^T\|(\nabla^2v_\varepsilon, \partial_tv_\varepsilon)\|_2^2(s) \leq K_2(t,T),\\
  &\displaystyle\sup_{0\leq s\leq t}\|V_\varepsilon\|_\infty(s)\leq   K_3(t)\|V_0\|_\infty,\\
  &\displaystyle\sup_{0\leq s\leq t}\|(\bar v_\varepsilon,\partial_z\bar v_\varepsilon)\|_2^2(s)+\int_0^t\|(\nabla\bar v_\varepsilon,\nabla\partial_z\bar v_\varepsilon)\|_2^2(s)ds \leq K_4(t),
\end{eqnarray*}
for any $0<t<T<\infty$, where $K_i, i=1,2,3,4$, are the same functions as those in Propositions \ref{prop3.1}--\ref{prop3.4}.

Note that the associated pressure $p_\varepsilon(\textbf{x}^H,t)$ for the solution $v_\varepsilon$ in system (\ref{main1})--(\ref{main2}) can be decomposed as $p_\varepsilon=p_\varepsilon^1+p_\varepsilon^2$, where $p_\varepsilon^1$ and $p_\varepsilon^2$ are the unique solutions to systems
$$
-\Delta_Hp_\varepsilon^1(\textbf{x}^H,t)=\frac{1}{2h}\nabla_H\cdot \nabla_H\cdot \int_{-h}^h(v_\varepsilon\otimes v_\varepsilon)dz,
$$
and
$$
-\Delta_Hp_\varepsilon^2(\textbf{x}^H,t)= \frac{ f_0}{2h}\int_{-h}^h\nabla_H\cdot (k\times v_\varepsilon) dz,
$$
respectively, subject to the periodic boundary conditions, and the
average zero condition
$\int_Mp_\varepsilon^id\textbf{x}^H=0$, for $i=1,2$. As
a result, by the elliptic regularity estimates, and recalling the
$L^\infty(0,t; L^6(\Omega))$ estimate on $v_\varepsilon$ stated above,
we obtain by the H\"older inequality that $$
\|p_\varepsilon^1\|_{L^\infty(0,t;L^3(\Omega))}\leq
C\|v_\varepsilon\|_{L^\infty(0,t; L^6(\Omega))}^2\leq
CK_1(t)^2(1+\|v_0\|_6^2),
$$
and
$$
\|\nabla p_\varepsilon^2\|_{L^\infty(0,t; L^2(\Omega))}\leq C\|v_\varepsilon\|_{L^\infty(0,t; L^2(\Omega))}\leq C\|v_0\|_2^2,
$$
for any $t\in(0,\infty)$.

Rewrite equation (\ref{main1}) for $v_\varepsilon$ as
$$
\partial_tv_\varepsilon+\nabla_H\cdot(v_\varepsilon\otimes v_\varepsilon)+\partial_z(w_\varepsilon v_\varepsilon)+\nabla_Hp_\varepsilon(\textbf{x}^H,t)-\Delta v_\varepsilon+f_0k\times v_\varepsilon=0.
$$
Noticing that $L^2(\Omega)\hookrightarrow L^{\frac54}(\Omega)\hookrightarrow W^{-1,\frac{5}{4}}(\Omega)$, and recalling that $w_\varepsilon$ is determined by $v_\varepsilon$ through (\ref{main2}), it follows from the above equation and using the H\"older inequality that
\begin{align*}
  \|\partial_tv_\varepsilon\|_{W_\text{per}^{-1,\frac{5}{4}}}\leq& \|\nabla_H\cdot(v_\varepsilon\otimes v_\varepsilon)\|_{W_\text{per}^{-1,\frac{5}{4}}} +\|\partial_z(w_\varepsilon v_\varepsilon)\|_{W_\text{per}^{-1,\frac{5}{4}}}+\|\Delta v_\varepsilon\|_{W_\text{per}^{-1,\frac{5}{4}}}\\
  &+\|\nabla_Hp^1_\varepsilon \|_{W_\text{per}^{-1,\frac{5}{4}}} +\|\nabla_Hp_\varepsilon^2\|_{W_\text{per}^{-1,\frac{5}{4}}} +f_0\|k\times v_\varepsilon\|_{W_\text{per}^{-1,\frac{5}{4}}}\\
  \leq& C(\|v_\varepsilon\otimes v_\varepsilon\|_{ {\frac{5}{4}}}+\|w_\varepsilon v_\varepsilon\|_{ {\frac{5}{4}}}+\|\nabla v_\varepsilon\|_{ {\frac{5}{4}}}+\|p_\varepsilon^1\|_{ {\frac{5}{4}}}+\|\nabla_H p_\varepsilon^2\|_2+\|v_\varepsilon\|_{ 2 })\\
  \leq&C(\|v_\varepsilon\|_{ \frac{5}{2}}^2+\|w_\varepsilon\|_2\|v_\varepsilon\|_{\frac{10}{3}} +\|\nabla v_\varepsilon\|_2+\|p_\varepsilon^1\|_2+\|v_\varepsilon\|_2)\\
  \leq&C(\|v_\varepsilon\|_6^2+\|\nabla v_\varepsilon\|_2\|v_\varepsilon\|_6+\|\nabla v_\varepsilon\|_2+\|v_\varepsilon\|_2),
\end{align*}
and thus we have
\begin{align*}
  \|\partial_tv_\varepsilon\|_{L^{\frac{5}{4}}(0,t;W_\text{per} ^{-1,\frac{5}{4}})} \leq& C(\|v_\varepsilon\|_{L^\infty(0,t;L^6(\Omega))}^2 +\|v_\varepsilon\|_{L^\infty (0,t;L^6(\Omega))}\|\nabla v_\varepsilon\|_{L^2(\Omega\times(0,t))}\\
  &+\|\nabla v_\varepsilon\|_{L^2(\Omega\times(0,t))}+\|v_\varepsilon\|_{L^\infty(0,t; L^2(\Omega))})\\
  \leq&C(\|v_\varepsilon\|_{L^\infty(0,t;L^6(\Omega))}^2+\|\nabla v_\varepsilon\|_{L^2(\Omega\times(0,t))}^2+1)\leq B(t),
\end{align*}
for a finite valued function $B(t)$, for $t\in[0,T]$, which is independent of $\varepsilon$.

On account of the estimates obtained above, and noticing that
$H^1_\text{per}\hookrightarrow\hookrightarrow L^2\hookrightarrow W_\text{per}^{-1,\frac{5}{4}}$, and that $H_\text{per}^2\hookrightarrow\hookrightarrow H_\text{per}^1\hookrightarrow L^2$, one can apply the Aubin--Lions lemma, i.e., Lemma \ref{AL}, to deduce that there is a subsequence, still denoted by $\{v_\varepsilon\}$, and a vector field $v=\bar v+V$, such that
\begin{eqnarray*}
  &v_\varepsilon\rightarrow v\mbox{ in }L^2(\Omega\times(0,t))\cap C([0,t];W_\text{per}^{-1,\frac54}(\Omega)),\\
   &v_\varepsilon\rightharpoonup v\mbox{ in }L^2(0,t; H_\text{per}^1(\Omega)\cap\mathcal H),
  \quad v_\varepsilon\overset{*}{\rightharpoonup}v\mbox{ in }L^\infty(0,t;L^2(\Omega)),\\
  &\nabla v_\varepsilon\overset{*}{\rightharpoonup}\nabla v\mbox{ in }L^\infty(t,T;L^2(\Omega)),
  \quad (\nabla^2v_\varepsilon,\partial_tv_\varepsilon) \rightharpoonup(\nabla^2 v,\partial_tv)\mbox{ in }L^2(\Omega\times(t,T)),\\
  &V_\varepsilon\overset{*}{\rightharpoonup}V\mbox{ in }L^\infty(\Omega\times(0,t)),\\
  &\partial_z\bar v_\varepsilon\overset{*}{\rightharpoonup}\partial_z\bar v\mbox{ in }L^\infty(0,t;L^2(\Omega)),\quad\partial_z\bar v_\varepsilon\rightharpoonup\partial_z\bar v\mbox{ in }L^2(0,t;H_\text{per}^1(\Omega)),
\end{eqnarray*}
for any $0<t<T<\infty$, where $\rightharpoonup$ and
$\overset{*}{\rightharpoonup}$ are the weak and weak-* convergences,
 respectively.
Moreover, by the weakly lower semi-continuity of the relevant norms, we have
\begin{equation}
  \label{4.1}
  \|V\|_{L^\infty(\Omega\times(0,t))}\leq K_3(t)\|V_0\|_\infty,\quad\frac{1}{2}\|v\|_2^2(t)+\int_0^t\|\nabla v\|_2^2(\tau)d\tau\leq\frac{1}{2}\|v_0\|_2^2,
\end{equation}
for a.e.~$t\in[0,\infty)$.

We claim that $v$ is a weak solution to system (\ref{main1})--(\ref{ic}). To this end, we need to verify (i)--(iv) in Definition \ref{def1.1}. By the strong convergences stated above, one can see that
$$
v\in C([0,\infty); W_\text{per}^{-1,\frac54}(\Omega))\cap L^\infty_{\text{loc}}([0,\infty); L^2(\Omega))\cap L^2_{\text{loc}}([0,\infty); H_\text{per}^1(\Omega)\cap\mathcal H),
$$
from which, by the density argument, one obtains
$$
v\in C([0,\infty);L^2_w(\Omega))\cap L^2_{\text{loc}}([0,\infty); H_\text{per}^1(\Omega)\cap\mathcal H),
$$
verifying (i) in Definition \ref{def1.1}.
Recalling that $v_\varepsilon$ is smooth, it is clear that
$v_\varepsilon$ satisfies the weak formula, i.e.~(ii),
stated in Definition \ref{def1.1}. As a result, recalling the
convergences stated in the previous paragraph, one can take the limit,
as $\varepsilon$ goes to zero, to show that $v$ satisfies the weak formula (ii) in Definition \ref{def1.1}. Note that
$$
v\in L^2(t,T; H_\text{per}^1(\Omega)),\quad \partial_tv\in L^2(t,T; L^2(\Omega)),
$$
$v$ is actually a strong solution to the primitive equations, away from the initial time. By the aid of this fact, the differential energy inequality (iii) in Definition \ref{def1.1} actually holds as an equality. The term (iv) is guaranteed by (\ref{4.1}). Therefore, $v$ is a weak solution to system (\ref{main1})--(\ref{ic}), with initial data $v_0$.

Recall that we have the decomposition $v=\bar v+V$, with $\bar v$ and $V$ being the weak limits of $\bar v_\varepsilon$ and $V_\varepsilon$, respectively. By the weakly lower semi-continuity of norms, $\bar v$ and $V$ have the same regularities and same estimates to those for $\bar v_\varepsilon$ and $V_\varepsilon$, respectively. This completes the proof of (i).

(ii) Let $v$ be the weak solution established in (i). Note that $\mu$ is continuous on $[0,\infty)$. There is a positive short time $T_v$, such that
$$
\sup_{0\leq t\leq T_v}\|V\|_\infty(t)\leq\mu(T_v)\|V_0\|_\infty\leq 2\mu(0)\|V_0\|_\infty\leq\varepsilon_0.
$$
Thanks to this, by Theorem \ref{thmwsu}, $v$ is the unique weak solution to system (\ref{main1})--(\ref{ic}), with initial data $v_0$. This completes the proof of (ii).
\end{proof}

\section*{Acknowledgments}
{E.S.T. is thankful to  the kind hospitality of the Universidade Federal do Rio de Janeiro (UFRJ) and Instituto Nacional de Matem\' {a}tica  Pura  e Aplicada (IMPA) where part of this work was completed; and to the partial support of  the  CNPq-CsF grant \# 401615/2012-0, through the program Ci\^encia sem Fronteiras. This work was supported in part by the ONR grant N00014-15-1-2333 and the NSF grants DMS-1109640 and DMS-1109645}
\par

\end{document}